\newtheorem{lemma}{Lemma}
\newtheorem{theorem}{Theorem}
\newtheorem{corollary}{Corollary}
\theoremstyle{definition}
\newtheorem{definition}{Definition}
\newtheorem{remark}{Remark}
\newtheorem{example}{Example}
\begin{document}

\markboth{Andrej Novikov}{$L_1$-space for a positive operator affiliated with von Neumann algebra}

\title{$L_1$-space for a positive operator affiliated with von Neumann algebra}

\author{ANDREJ NOVIKOV \footnote{Lobachevskii Institute of Mathematics and Mechanics, Kazan (Volga region) Federal University,
a.hobukob@gmail.com}}

\maketitle
 
\begin{abstract}

In this paper we suggest an approach for constructing an $L_1$-type space for a positive selfadjoint operator affiliated with
von Neumann algebra. For such operator we introduce a seminorm, 
and prove that it is a norm if and only if the operator is injective. 
For this norm we construct an $L_1$-type space as the complition of the space of hermitian ultraweakly continuous 
linear functionals on von Neumann algebra, and represent $L_1$-type space as a space of continuous linear functionals
on the space of special sesquilinear forms. Also, we prove that $L_1$-type space is isometrically isomorphic to
the predual of von Neumann algebra in a natural way. We give a small list of alternate definitions
of the seminorm, and a special definition for the case of semifinite von Neumann algebra, in particular.
We study order properties of $L_1$-type space, and demonstrate the connection between semifinite normal weights
and positive elements of this space. At last, we construct a similar $L$-space for the positive element of C*-algebra,
and study the connection between this $L$-space and the $L_1$-type space in case when this C*-algebra is a von Neumann algebra.

\end{abstract}

{Keywords:operator algebra; von Neumann algebra; C*-algebra; noncommutative integration; $L_1$-space; positive operator; semifinite normal weight; unbounded operator.}

{AMS Subject Classification: 46L05, 46L10, 46L51, 47B65, 47C15, 47L50}

\section{Introduction}

In 1953 I.E. Segal proposed the foundations of noncommutative integration theory
with respect to a faithful normal semi-finite trace $\tau$,
based on the notion of measurability of an unbounded operator affiliated with a von Neumann algebra,
and used it to define noncommutative $L_1$, $L_2$ and $L_\infty$ spaces.~\cite{Segal1953} Thereafter, this definition was succesfully extended to the full range of $L_p$,
and the theory of this $L_p$-spaces was studied by various authors. Extension of the theory to the case of a von Neumann algebra
equipped with an arbitrary weight became possible only after development of the Tomita–
Takesaki modular theory. The main results in this field are reviewed in \cite{Kostecki-review}.

One of the approaches for noncommutative integration in von Neumann algebras with respect to 
the normal weight was proposed by A.N. Sherstnev in 1970s (see \cite{She74} and \cite{She78}).
Later it was developed by A.M. Bikchentaev, O.E. Tikhonov, N.V. Trunov, A.A. Zolotarev and others. The main results in this area
are reviewed in the  \cite{She82} and \cite{TruShe85}.

In this paper we propose an approach for the construction of $L_1$ space, which is associated with a positive selfadjoint
operator affiliated with a von Neumann algebra. In some sence the statement of a question in this paper is dual to Sherstnev's approach,
and many of the theorems of this paper have their duals, as it is noted in the text.

\section{Definitions and notation}

Throughout this paper we adhere to the following notation. By $\mathcal{M}$ we denote a von Neumann algebra that acts on 
a Hilbert space $H$ with the scalar product $\langle\cdot,\cdot \rangle$. We denote its selfadjoint part by $\mathcal{M}^{sa}$, 
and the set of all projections in $\mathcal{M}$ by $\mathcal{M}^\mathrm{pr}$. Let $p\in \mathcal{M}^\mathrm{pr}$
and $x\in \mathcal{M}$, then by $x_p$ we denote the restriction of $pxp$ to $pH$ ($x_p:=pxp|_{pH}$), also
we denote the reduction of $\mathcal{M}$ to $pH$ by $\mathcal{M}_p$. By $\mathcal{C}(\mathcal{M})$ we denote
the center of $\mathcal{M}$. By $\mathcal{M}_*$ and $\mathcal{M}_*^h$ we denote the 
predual of $\mathcal{M}$ and its Hermitian part, respectively. If an operator $x$ is affiliated with $\mathcal{M}$ then we write $x\eta\mathcal{M}$.
We denote the domain of an operator $x$ by $D(x)$. We denote the closure of an operator $x$ by $\overline{x}$, adjoint operator is denoted as $x^*$. 
We denote the identity operator, the zero operator and the zero vector by $\mathbf{1}$, $\mathbf{0}$ and $\mathit{0}$, respectively. We use standard 
notation for multiplication of a functional $\varphi \in\mathcal{M}_*$ by an operator $x\in \mathcal{M}$, namely, $x\varphi$, $\varphi x$ 
and $x \varphi x$ denote the linear functionals $y \mapsto \varphi(xy)$, $y\mapsto \varphi(yx)$ and $y\mapsto \varphi(xyx)$, respectively.

For a normed space $X$ we use $X^*$ to denote its continuous dual space and $X^{al}$ to denote its algebraic dual space.
For an ordered normed space $X$ we denote its positive cone by $X^+$.

We also consider partial order for positive selfadjoint operators affiliated with $\mathcal{M}$.
For positive selfadjoint $x,\ y \eta \mathcal{M}$ we write $x\leq y$ if and only if $D(y^\frac{1}{2})\subset D(x^\frac{1}{2})$ and
$\|x^\frac{1}{2} f\|^2\leq \|y^\frac{1}{2} f\|^2$ for all $f\in D(y^\frac{1}{2})$. If for an increasing net $(x_j)_{j \in J}$ of 
operators affiliated with $\mathcal{M}$ there exists $x=\sup\limits_{j\in J} (x_j)$, then we write $x_j\nearrow x$.

For a positive selfadjoint operator $x \eta \mathcal{M}$ we use $x_\lambda$ to denote 
$\lambda\overline{x(\lambda+x)^{-1}}$ with $\lambda\in\mathbb{R}^+\setminus\{0\}$.
From the Spectral theorem it follows that the mapping $\lambda \mapsto x_\lambda\in\mathcal{M}^+$ is monotone operator-valued function
and $\lim\limits_{\lambda\to +\infty} x_\lambda^\frac{1}{2} f= x^\frac{1}{2} f$ for all $f\in D(x^\frac{1}{2})$, therefore
$x_\lambda\nearrow x$. For an unbounded $x$ and $\varphi \in \mathcal{M}_*^+$ we define $\varphi(x)$ 
as $\varphi(x):=\lim\limits_{\lambda\to+\infty} \varphi(x_\lambda)$.

\section{Construction and Representation of $L_1$-spaces}

From now on $a$ stands for a positive selfadjoint operator affiliated with $\mathcal{M}$.

In this section we give the definition of $L_1(a)$ and its dual $L_\infty(a)$. Also we construct natural isomorphism of $L_\infty(a)$ onto 
the space of special kind sesquilinear forms $\mathcal{S}_a(\mathcal{M})$.

We consider $\mathfrak{D}_a^{+}\equiv\{\varphi\in \mathcal{M}_*^+ | \varphi(a)<+\infty\}$,
$\mathfrak{D}_a^{h}\equiv\mathfrak{D}_a^+-\mathfrak{D}_a^+$ and $\mathfrak{D}_a\equiv\mathrm{lin}_\mathbb{C} \mathfrak{D}_a^+$. Note that 
if operator $a$ is bounded, then $\mathfrak{D}_a^+=\mathcal{M}_*^+$, $\mathfrak{D}_a^h=\mathcal{M}_*^h$ and $\mathfrak{D}_a=\mathcal{M}_*$.
We define a seminorm $\|\cdot\|_a$ on $\mathfrak{D}_a^{h}$ as
$$\|\varphi\|_a:=\inf\{\varphi_1(a)+\varphi_2(a)\ |
\ \varphi=\varphi_1-\varphi_2;\ \varphi_1,\varphi_2\in\mathfrak{D}_a^+\}.$$
Also, from \cite[Theorem 2]{SkvTik98} states, that if operator $a$ is bounded, 
then $\|\varphi\|_a=\|a^\frac{1}{2} \varphi a^\frac{1}{2}\|$. If $\|\cdot\|_a$ is a norm, then we call it the $a$-norm. Note
that the $\mathbf{1}$-norm coincides with the restriction of the standard norm in $\mathcal{M}_*$ onto $\mathcal{M}_*^h$.

\begin{lemma}\label{density}
 For an injective bounded operator $a$ the sets 
 $a^\frac{1}{2}\mathcal{M}a^\frac{1}{2}=\{a^\frac{1}{2}x a^\frac{1}{2}| x\in \mathcal{M}\}$
 and $a^\frac{1}{2}\mathcal{M}^{sa} a^\frac{1}{2}=\{a^\frac{1}{2}x a^\frac{1}{2}| x\in \mathcal{M}^{sa}\}$ are $\sigma$-weakly dense
 subsets of $\mathcal{M}$ and $\mathcal{M}^{sa}$, respectively.
\end{lemma}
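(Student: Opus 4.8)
The plan is to show first that the $\sigma$-weak closure of $a^{\frac12}\mathcal{M}^{sa}a^{\frac12}$ (resp. $a^{\frac12}\mathcal{M}a^{\frac12}$) is all of $\mathcal{M}^{sa}$ (resp. $\mathcal{M}$), and it suffices to treat the selfadjoint case since the general case follows by taking real and imaginary parts. A natural route is via duality: the $\sigma$-weak closure of a convex subset of $\mathcal{M}$ equals its weak closure as a subset of the dual $(\mathcal{M}_*)^*$, so a selfadjoint $x\in\mathcal{M}^{sa}$ lies in the closure of $a^{\frac12}\mathcal{M}^{sa}a^{\frac12}$ unless some Hermitian $\varphi\in\mathcal{M}_*^h$ separates it, i.e. $\varphi(a^{\frac12}ya^{\frac12})=\varphi(y)=0$ for all... wait, that is not quite separation, so more precisely: if $a^{\frac12}\mathcal{M}^{sa}a^{\frac12}$ were not $\sigma$-weakly dense in $\mathcal{M}^{sa}$, its closure would be a proper closed subspace, hence there is $0\neq\varphi\in\mathcal{M}_*^h$ vanishing on it, that is $\varphi(a^{\frac12}ya^{\frac12})=0$ for every $y\in\mathcal{M}^{sa}$, and therefore (by linearity over $\mathbb{C}$) for every $y\in\mathcal{M}$.

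The key step is then to deduce $\varphi=0$ from $a^{\frac12}\varphi a^{\frac12}=0$. Write $\varphi=\varphi_+-\varphi_-$ in its Jordan decomposition and pass to the GNS-type construction, or more directly: by the polar decomposition of normal functionals there is a partial isometry $u\in\mathcal{M}$ with $|\varphi|=u\varphi$, and $|\varphi|$ is a positive normal functional with $a^{\frac12}|\varphi|a^{\frac12}=0$ (push the $u$ through, using $a^{\frac12}\varphi a^{\frac12}=0$ and boundedness of $a^{\frac12}$). For a positive normal $\psi:=|\varphi|$ with $a^{\frac12}\psi a^{\frac12}=0$ we get $\psi(a^{\frac12}xa^{\frac12})=0$ for all $x\geq 0$, hence with $x=\mathbf{1}$ that $\psi(a)=0$. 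Since $\psi\geq 0$ and $a^{\frac12}$ is injective with dense range — here injectivity of $a$ is exactly what is used — the support projection $s(\psi)$ must satisfy $s(\psi)a^{\frac12}=0$, forcing $s(\psi)=0$ on $\overline{\mathrm{ran}}\,a^{\frac12}=H$, so $\psi=0$ and hence $\varphi=0$, contradiction. (Concretely, $\psi(a)=0$ together with $a_\lambda\nearrow a$ and $0\le a_\lambda\le a$ gives $\psi(a_\lambda)=0$ for all $\lambda$; since $a$ is injective, $a_\lambda\uparrow\mathbf 1$ strongly on... no — rather, $\psi(a^{\frac12}xa^{\frac12})=0$ for all $x\in\mathcal M^+$ says the normal positive functional $x\mapsto\psi(a_\lambda^{\frac12}xa_\lambda^{\frac12})$ is $0$, so $a_\lambda^{\frac12}s(\psi)=0$, and letting $\lambda\to\infty$ and using injectivity of $a$ yields $s(\psi)=0$.)

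I expect the main obstacle to be the passage from "$a^{\frac12}\psi a^{\frac12}=0$" to "$\psi=0$" in the correct generality: one must be careful that $a$ is only assumed bounded and injective (range need not be closed), so "$a^{\frac12}$ has dense range" is the substitute for invertibility, and the argument has to go through support projections rather than through multiplying by $a^{-\frac12}$. A clean way to package this is: $\psi(a^{\frac12}xa^{\frac12})=0$ for all $x\in\mathcal M^+$ means $\|\,a^{\frac12}\,\|_{\psi}=0$ in the GNS space of $\psi$ in the appropriate sense, i.e. $\Lambda_\psi(a^{\frac12})=0$ where $\Lambda_\psi$ is the GNS map, equivalently $\psi(a)=0$; then $0=\psi(a)\ge\psi(a_\lambda)\ge 0$ and $a_\lambda\nearrow a$ with $a$ injective forces, via normality of $\psi$ applied to $a_\lambda^{1/2}(\cdot)a_\lambda^{1/2}$, that the support of $\psi$ is orthogonal to the range of every $a_\lambda^{1/2}$, hence to $H$. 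Everything else — reducing the two-sided density statement to the separation statement, and reducing $\mathcal M$ to $\mathcal M^{sa}$ — is routine functional analysis and Jordan/real-imaginary decomposition.
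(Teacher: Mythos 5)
Your outline (Hahn--Banach separation, then showing that a Hermitian normal functional annihilating $a^{\frac12}\mathcal{M}^{sa}a^{\frac12}$ must vanish) is legitimate, but the key step has a genuine gap: you cannot deduce $a^{\frac12}|\varphi|a^{\frac12}=0$ from $a^{\frac12}\varphi a^{\frac12}=0$ by ``pushing the $u$ through''. In the polar/Jordan decomposition $\varphi=\varphi_+-\varphi_-$, the partial isometry $u$ is the difference of the two support projections, and $|\varphi|(a^{\frac12}ya^{\frac12})=\varphi\bigl(u\,a^{\frac12}ya^{\frac12}\bigr)$ (or $\varphi\bigl(a^{\frac12}ya^{\frac12}u\bigr)$, depending on convention); since $u\in\mathcal{M}$ need not commute with $a^{\frac12}$, this is not of the form $\varphi(a^{\frac12}za^{\frac12})$, so the hypothesis does not apply to it. Taking $y=\mathbf{1}$ only gives $\varphi_+(a)=\varphi_-(a)$, not $|\varphi|(a)=0$. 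This Hermitian-to-positive reduction is precisely the hard content of the lemma (the statement is equivalent to injectivity of $\varphi\mapsto a^{\frac12}\varphi a^{\frac12}$ on $\mathcal{M}_*^h$), whereas the positive case, which you handle correctly via support projections and injectivity of $a$, is the easy part; also note the $a_\lambda$ machinery is superfluous here because $a$ is assumed bounded.

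For comparison, the paper proves density directly, with no duality: setting $q_n=a^{\frac12}(\frac1n+a^{\frac12})^{-1}$, injectivity gives $q_n\nearrow\mathrm{rp}(a^{\frac12})=\mathbf{1}$ strongly, and for $y\in\mathcal{M}$ one has $q_nyq_n=a^{\frac12}x_na^{\frac12}$ with $x_n=(\frac1n+a^{\frac12})^{-1}y(\frac1n+a^{\frac12})^{-1}\in\mathcal{M}$, $\|q_nyq_n\|\le\|y\|$, and $q_nyq_n\to y$ in WOT, hence $\sigma$-weakly (bounded sequence), with selfadjointness preserved. The same device would repair your argument --- evaluate the annihilating $\varphi$ at $q_nzq_n\in a^{\frac12}\mathcal{M}a^{\frac12}$ and use normality to conclude $\varphi(z)=0$ --- but once you do that, the Hahn--Banach detour is no longer needed.
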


\begin{proof}
According to the Spectral Theorem the sequence
$q_n\equiv a^\frac{1}{2}(\frac{1}{n}+a^\frac{1}{2})^{-1}$ 
is increasing and converges to $\mathrm{rp}(a^\frac{1}{2})=\mathbf{1}$ in the strong operator topology. 
Let $y\in\mathcal{M}$ and $x_n:=(\frac{1}{n}+a^\frac{1}{2})^{-1}y (\frac{1}{n}+a^\frac{1}{2})^{-1}$, then the sequence
$y_n=q_n y q_n= a^\frac{1}{2} x_n a^\frac{1}{2}$ converges to $y$ in the weak operator topology.
Since $\|y_n\|\leq \|y\|$, it follows that $y_n$ $\sigma$-weakly converges to $y$. Hence,
$a^\frac{1}{2}\mathcal{M}a^\frac{1}{2}$ is dense in $\mathcal{M}$ in the $\sigma$-weak operator topology. 
To finish the proof note that if $y$ is selfadjoint, then $y_n$ is selfadjoint.
\end{proof}

A slight change in the latter proof actually shows that the sets $\{\overline{a^\frac{1}{2}x a^\frac{1}{2}}| x\in \mathcal{M}, \overline{a^\frac{1}{2}x a^\frac{1}{2}}\in \mathcal{M}\}$
and $
\{\overline{a^\frac{1}{2}x a^\frac{1}{2}}| x\in \mathcal{M}^{sa}, \overline{a^\frac{1}{2}x a^\frac{1}{2}}\in \mathcal{M}\}$ are $\sigma$-weakly dense
 subsets of $\mathcal{M}$ and $\mathcal{M}^{sa}$, respectively.

\begin{theorem}\label{faithfullness}
$\|\cdot\|_a$ is a norm on $\mathfrak{D}_a^h$ if and only if operator $a$ is injective.
\end{theorem}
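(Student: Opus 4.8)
The seminorm property of $\|\cdot\|_a$ on $\mathfrak{D}_a^h$ is routine, so the entire content of the statement is faithfulness, and the plan is to treat the two implications separately. For the ``only if'' direction I would argue contrapositively: suppose $a$ is not injective and let $p\in\mathcal{M}^{\mathrm{pr}}$ be the orthogonal projection onto $\ker a=\ker a^\frac{1}{2}$, which lies in $\mathcal{M}$ because $a\eta\mathcal{M}$ and is nonzero by hypothesis. Pick a unit vector $\xi\in pH$ and set $\varphi:=\langle\,\cdot\,\xi,\xi\rangle\in\mathcal{M}_*^+\setminus\{0\}$. For $f\in\ker a$ one has $(\lambda+a)^{-1}f=\lambda^{-1}f$, hence $a(\lambda+a)^{-1}f=0$ and $a_\lambda f=0$; in particular $a_\lambda p=0$, so $\varphi(a_\lambda)=\langle a_\lambda\xi,\xi\rangle=0$ for every $\lambda>0$ and therefore $\varphi(a)=\lim_{\lambda\to+\infty}\varphi(a_\lambda)=0$. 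Thus $\varphi\in\mathfrak{D}_a^+$, and the trivial decomposition $\varphi=\varphi-0$ gives $\|\varphi\|_a\le\varphi(a)=0$; so $\|\cdot\|_a$ annihilates the nonzero element $\varphi$ and is not a norm.

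For the ``if'' direction, assume $a$ is injective and put $q_n:=\overline{a^\frac{1}{2}(\tfrac{1}{n}+a^\frac{1}{2})^{-1}}=\mathbf{1}-(\mathbf{1}+na^\frac{1}{2})^{-1}\in\mathcal{M}^+$, so that $\|q_n\|\le1$ and, by the Spectral Theorem, $q_n$ converges strongly to $\mathbf{1}$ minus the projection onto $\ker a$, i.e.\ $q_n\nearrow\mathbf{1}$ — this is precisely where injectivity enters. The key spectral estimate is that $q_n^2\le n^2 a_\lambda$ in $\mathcal{M}^+$ whenever $\lambda\ge n^{-2}$; expressing both sides through the spectral measure of $a$, this reduces to the elementary inequality $t\big/(\tfrac{1}{n}+t^\frac{1}{2})^2\le n^2\lambda t/(\lambda+t)$ for $t\ge0$. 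Since $a_\lambda\le a$, it follows that $\psi(q_n^2)\le n^2\psi(a)$ for every $\psi\in\mathfrak{D}_a^+$, and hence, for the positive normal functional $q_n\psi q_n$ and any $x\in\mathcal{M}$,
$$|\psi(q_n x q_n)|=|(q_n\psi q_n)(x)|\le\|x\|\,\psi(q_n^2)\le n^2\|x\|\,\psi(a).$$

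Now let $\varphi\in\mathfrak{D}_a^h$ satisfy $\|\varphi\|_a=0$, and fix $n$ and $x\in\mathcal{M}$. For each $\varepsilon>0$ choose $\varphi_1,\varphi_2\in\mathfrak{D}_a^+$ with $\varphi=\varphi_1-\varphi_2$ and $\varphi_1(a)+\varphi_2(a)<\varepsilon$; applying the displayed bound to $\varphi_1$ and to $\varphi_2$ gives $|\varphi(q_n x q_n)|\le n^2\|x\|\varepsilon$, and letting $\varepsilon\to0$ yields $\varphi(q_n x q_n)=0$. Since $q_n\to\mathbf{1}$ strongly and $\|q_n x q_n\|\le\|x\|$, the sequence $(q_n x q_n)_n$ converges to $x$ in the weak operator topology, hence $\sigma$-weakly (the two topologies coincide on bounded subsets of $\mathcal{M}$); as $\varphi$ is normal, $\varphi(x)=\lim_n\varphi(q_n x q_n)=0$. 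Since $x\in\mathcal{M}$ was arbitrary, $\varphi=0$, so $\|\cdot\|_a$ is a norm.

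I expect the only step needing genuine care — and the one place where the unboundedness of $a$ is felt — to be the passage from the pointwise inequality of functions to $\psi(q_n^2)\le n^2\psi(a)$ and the justification that $\varphi$ may be evaluated against the bounded approximants $q_n x q_n$; both come down to the monotone behaviour of normal positive functionals along $a_\lambda\nearrow a$ together with the coincidence of the weak and $\sigma$-weak topologies on bounded sets, and everything else is a direct spectral computation together with an appeal to Lemma~\ref{density}'s circle of ideas.
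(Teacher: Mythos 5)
Your argument is correct, and while your ``only if'' direction is the same computation as the paper's (a vector state supported on $\ker a$ is a nonzero element of $\mathfrak{D}_a^+$ annihilated by $\|\cdot\|_a$), your ``if'' direction takes a genuinely different route. The paper first settles the bounded case by quoting \cite[Theorem 2]{SkvTik98} (the identity $\|\varphi\|_a=\|a^\frac{1}{2}\varphi a^\frac{1}{2}\|$) together with the density Lemma \ref{density}, and then handles unbounded $a$ by the monotonicity $\|\varphi\|_{a_\lambda}\leq\|\varphi\|_a$, reducing to the injective bounded operators $a_\lambda$. You instead prove faithfulness directly: the spectral inequality $q_n^2\leq n^2a_\lambda$ for $\lambda\geq n^{-2}$ (which checks out, since $\lambda+t\leq\lambda+2n\lambda t^\frac{1}{2}+n^2\lambda t$ when $n^2\lambda\geq1$) gives the quantitative bound $|\psi(q_nxq_n)|\leq n^2\|x\|\,\psi(a)$ for $\psi\in\mathfrak{D}_a^+$, from which $\|\varphi\|_a=0$ forces $\varphi(q_nxq_n)=0$ for all $n$, and normality of $\varphi$ together with $q_nxq_n\to x$ $\sigma$-weakly (boundedness plus strong convergence $q_n\nearrow\mathbf{1}$, exactly the mechanism of Lemma \ref{density}) yields $\varphi=0$. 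What your version buys is self-containedness and uniformity: it does not invoke the external identity from \cite{SkvTik98} (which the paper only extends to unbounded $a$ later, in Theorem \ref{unbounded_equation}), and it treats bounded and unbounded $a$ in one stroke; what the paper's version buys is brevity given the cited machinery and a clean reduction to the bounded case that it reuses elsewhere. One small presentational point: your $q_n$ is exactly the approximant of Lemma \ref{density}, so you could cite that lemma for $q_n\nearrow\mathbf{1}$ and the bounded-WOT-implies-$\sigma$-weak step rather than rederiving them.
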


\begin{proof}
If operator $a$ is not injective, then there exists a non-zero $f$, such that $af=\mathit{0}$, hence
$\|\langle \cdot f, f\rangle\|_a=\langle a f, f\rangle=0$ and $\|\cdot\|_a$ is not a norm.

Conversely, if $\|\cdot\|_a$ is a norm and operator $a$ is bounded, then 
$\|\varphi\|_a=\|a^\frac{1}{2} \varphi a^\frac{1}{2}\|$ by \cite[Theorem 2]{SkvTik98},
and from Lemma \ref{density}, it follows that $\|a^\frac{1}{2} \varphi a^\frac{1}{2}\|=0$ only if $\varphi=0$. 
If operator $a$ is unbounded, then for each 
$\lambda\in \mathbb{R}^+$ the positive bounded operator $a_\lambda\in \mathcal{M}$ is injective.  
Evidently, $\varphi_1(a_\lambda)+\varphi_2(a_\lambda)\leq \varphi_1(a)+\varphi_2(a)$ for each $\lambda\in\mathbb{R}^+$ and all
$\varphi_1,\ \varphi_2 \in \mathfrak{D}_a^+$,
therefore the inequality $\|\varphi\|_{a_\lambda}\leq\|\varphi\|_a$ holds for each $\varphi\in \mathfrak{D}_a^h$.
Since operator $a_\lambda$ is injective, it follows that $\|\cdot\|_{a_\lambda}$ is a norm, hence $\|\cdot\|_a$ is a norm.

\end{proof}

For an injective operator $a$ by $L_1^h(a)$ we denote the completion of the normed space ($\mathfrak{D}_a^h$, $\|\cdot\|_a$).
By \cite[Proposition 1]{SkvTik98} the dual of $L_1^h(a)$ is 
$(L_\infty^{sa}(a), \|x\|^a)$, where $L_\infty^{sa}(a)\equiv\{x\in(\mathfrak{D}_a^h)^{al} 
|\exists \lambda\in\mathbb{R}, \medskip -\lambda a \leq x \leq \lambda a\}$ and 
$\|x\|^a\equiv\inf\{\lambda\in \mathbb{R}| -\lambda a \leq x \leq \lambda a\}$.
We identify the elements of $\mathfrak{D}_a^h$ with the corresponding elements in $L_1^h(a)$. Further for an injective operator $a$ 
we always assume that $L_\infty^{sa}(a)$ is equiped with the $a$-norm.

For $x\in\mathcal{M}$ we define the sesquilinear form $\widehat{a^\frac{1}{2}xa^\frac{1}{2}}$ on $D(a^\frac{1}{2})\times D(a^\frac{1}{2})$ by
the equality $\widehat{a^\frac{1}{2}xa^\frac{1}{2}}(f,g):=\langle xa^\frac{1}{2}f,a^\frac{1}{2}g\rangle$. The set of all such sesquilinear 
forms is denoted by $\mathcal{S}_a(\mathcal{M})\equiv\{ \widehat{a^\frac{1}{2}xa^\frac{1}{2}} | x\in\mathcal{M}\}$.
We consider partial order on $\mathcal{S}_a(\mathcal{M}^{sa})$, such that $\widehat{a^\frac{1}{2}xa^\frac{1}{2}}\leq \widehat{a^\frac{1}{2}ya^\frac{1}{2}}$ 
if and only if $\widehat{a^\frac{1}{2}xa^\frac{1}{2}}(f, f)\leq \widehat{a^\frac{1}{2}ya^\frac{1}{2}}(f, f)$
for all $f\in D(a^\frac{1}{2})$. By $\mathcal{S}_a(\mathcal{M}^{sa})$ we denote the seminormed space of sesquilinear forms 
$\{\widehat{a^\frac{1}{2}x a^\frac{1}{2}}| x\in \mathcal{M}^{sa}\}$ equiped  with the seminorm $p_a(\widehat{a^\frac{1}{2}x a^\frac{1}{2}})
:=\inf\{ \lambda \in \mathbb{R}^+| -\lambda \widehat{a^\frac{1}{2}\mathbf{1}a^\frac{1}{2}} \leq \widehat{a^\frac{1}{2}x a^\frac{1}{2}}
 \leq \lambda \widehat{a^\frac{1}{2}\mathbf{1}a^\frac{1}{2}}\}$. 

\begin{theorem}\label{definition_afa}
For $\varphi\in\mathfrak{D}_a$ the equality $a^\frac{1}{2}\varphi a^\frac{1}{2}(x)=\lim\limits_{\lambda\to+\infty}\varphi(a^\frac{1}{2}_\lambda x a^\frac{1}{2}_\lambda)$
with $x\in\mathcal{M}$ defines the normal functional $a^\frac{1}{2}\varphi a^\frac{1}{2}\in\mathcal{M}_*$.
\end{theorem}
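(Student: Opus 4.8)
The plan is to reduce to $\varphi\in\mathfrak{D}_a^{+}$ by linearity and then to work in the vector model of a normal positive functional. Since $\mathfrak{D}_a=\mathrm{lin}_{\mathbb{C}}\mathfrak{D}_a^{+}$ and both $\varphi\mapsto\varphi(a_\lambda^\frac{1}{2}xa_\lambda^\frac{1}{2})$ and the claimed assignment $\varphi\mapsto a^\frac{1}{2}\varphi a^\frac{1}{2}$ are linear in $\varphi$, it suffices to fix $\varphi\in\mathfrak{D}_a^{+}$. I would represent it as $\varphi=\sum_{n}\omega_{\xi_n}$, where $\omega_\xi:=\langle\,\cdot\,\xi,\xi\rangle$ and $\sum_{n}\|\xi_n\|^2=\|\varphi\|<+\infty$, which is possible by the standard structure of a normal positive functional on $\mathcal{M}$.

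First I would translate the hypothesis $\varphi(a)<+\infty$ into a statement about the vectors $\xi_n$. By the Spectral Theorem, for each $n$ the function $\lambda\mapsto\langle a_\lambda\xi_n,\xi_n\rangle=\|a_\lambda^\frac{1}{2}\xi_n\|^2$ is nondecreasing and increases to $\|a^\frac{1}{2}\xi_n\|^2$ (which is $+\infty$ exactly when $\xi_n\notin D(a^\frac{1}{2})$). Interchanging the supremum over $\lambda$ with the summation over $n$ (all terms nonnegative) gives $\varphi(a)=\lim_{\lambda\to+\infty}\varphi(a_\lambda)=\sum_{n}\|a^\frac{1}{2}\xi_n\|^2$, so $\varphi(a)<+\infty$ forces $\xi_n\in D(a^\frac{1}{2})$ for all $n$ and $\sum_{n}\|a^\frac{1}{2}\xi_n\|^2<+\infty$. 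Writing $t_n:=\|a^\frac{1}{2}\xi_n\|$, I also record that $\|a_\lambda^\frac{1}{2}\xi_n\|\le t_n$ for every $\lambda$ (because $a_\lambda\le a$) and that $a_\lambda^\frac{1}{2}\xi_n\to a^\frac{1}{2}\xi_n$ in norm as $\lambda\to+\infty$.

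The heart of the argument is to show that the net $\psi_\lambda:=a_\lambda^\frac{1}{2}\varphi a_\lambda^\frac{1}{2}$, which is a legitimate element of $\mathcal{M}_*$ since $a_\lambda^\frac{1}{2}\in\mathcal{M}$ is bounded, is Cauchy in $\mathcal{M}_*$ as $\lambda\to+\infty$; then completeness of $\mathcal{M}_*$ yields a limit $\psi\in\mathcal{M}_*$, and for every $x\in\mathcal{M}$ the limit $\lim_{\lambda\to+\infty}\varphi(a_\lambda^\frac{1}{2}xa_\lambda^\frac{1}{2})=\lim_{\lambda\to+\infty}\psi_\lambda(x)=\psi(x)$ exists and defines the normal functional $a^\frac{1}{2}\varphi a^\frac{1}{2}=\psi\in\mathcal{M}_*$. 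To get the Cauchy estimate I would use $\psi_\lambda=\sum_{n}\omega_{a_\lambda^\frac{1}{2}\xi_n}$ together with the elementary bound $\|\omega_\eta-\omega_\zeta\|\le\|\eta-\zeta\|\,(\|\eta\|+\|\zeta\|)$ and Cauchy--Schwarz, obtaining
$$\|\psi_\mu-\psi_\lambda\|\le\sum_{n}\|(a_\mu^\frac{1}{2}-a_\lambda^\frac{1}{2})\xi_n\|\,\bigl(\|a_\mu^\frac{1}{2}\xi_n\|+\|a_\lambda^\frac{1}{2}\xi_n\|\bigr)\le 2\Bigl(\sum_{n}\|(a_\mu^\frac{1}{2}-a_\lambda^\frac{1}{2})\xi_n\|^2\Bigr)^\frac{1}{2}\Bigl(\sum_{n}t_n^2\Bigr)^\frac{1}{2}.$$
Each term $\|(a_\mu^\frac{1}{2}-a_\lambda^\frac{1}{2})\xi_n\|^2$ tends to $0$ as $\lambda,\mu\to+\infty$ (the vectors $a_\lambda^\frac{1}{2}\xi_n$ being norm-convergent) and is dominated by the summable sequence $4t_n^2$, so the first factor tends to $0$ by dominated convergence and the net is Cauchy. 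When $\varphi\ge 0$ one moreover reads off $\psi=\sum_{n}\omega_{a^\frac{1}{2}\xi_n}\in\mathcal{M}_*^{+}$ with $\|\psi\|=\varphi(a)$, and linearity and boundedness of $a^\frac{1}{2}\varphi a^\frac{1}{2}$ for general $\varphi\in\mathfrak{D}_a$ follow at once from the four-term decomposition.

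The step that needs care, and that rules out a shorter route, is exactly this Cauchy estimate. The operators $a_\lambda^\frac{1}{2}xa_\lambda^\frac{1}{2}$ themselves do not converge in any operator topology in general --- when $a$ is unbounded their norms grow like $\lambda\|x\|$ --- and the map $s\mapsto sxs$ is not monotone on positive operators (already in $M_2(\mathbb{C})$ there exist $0\le s\le t$ and $x\ge 0$ with $sxs\not\le txt$), so neither a norm limit in $\mathcal{M}$ nor a monotone-net argument in $\mathcal{M}^{+}$ is available. The limit must be controlled functional-by-functional through the vector model, and once the dominated-convergence estimate is in place the remaining points --- the reduction to positive $\varphi$, the spectral identity $\varphi(a)=\sum_n t_n^2$, and the linearity and boundedness of $a^\frac{1}{2}\varphi a^\frac{1}{2}$ --- are routine.
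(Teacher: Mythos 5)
Your proposal is correct and takes essentially the same route as the paper: reduce to $\varphi\in\mathfrak{D}_a^{+}$, write $\varphi=\sum_n\omega_{\xi_n}$ with $\xi_n\in D(a^\frac{1}{2})$ and $\sum_n\|a^\frac{1}{2}\xi_n\|^2=\varphi(a)<+\infty$, and pass to the limit $\lambda\to+\infty$ using the domination of each term by $\|x\|\,\|a^\frac{1}{2}\xi_n\|^2$. The only cosmetic difference is that you establish norm-Cauchyness of $a_\lambda^\frac{1}{2}\varphi a_\lambda^\frac{1}{2}$ in $\mathcal{M}_*$ and invoke completeness, whereas the paper fixes $x$, interchanges limit and sum via uniform convergence of the series in $\lambda$, and identifies the limit directly as $\sum_n\omega_{a^\frac{1}{2}\xi_n}\in\mathcal{M}_*^{+}$.
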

\begin{proof}
Let $\varphi\in\mathfrak{D}_a^+$ and $\sum\limits_{i=1}^{+\infty}\langle \cdot f_i, f_i\rangle$ be a representation of $\varphi$.
It is evident that $\{f_i\}_{i=1}^{+\infty}\subset D(a^\frac{1}{2})$,
$\sum\limits_{i=1}^{+\infty}\|a^\frac{1}{2}f_i\|^2<+\infty$ and 
$\lim\limits_{\lambda\to +\infty} a^\frac{1}{2}_\lambda f = a^\frac{1}{2} f$ for all $f\in D(a^\frac{1}{2})$,
which implies $\lim\limits_{\lambda\to +\infty}\sum\limits_{i=1}^n\langle x a^\frac{1}{2}_\lambda f_i, a^\frac{1}{2}_\lambda f_i\rangle=
\sum\limits_{i=1}^n\langle x a^\frac{1}{2} f_i, a^\frac{1}{2} f_i\rangle$.
For the fixed $x\in\mathcal{M}$ the inequality $|\langle x a^\frac{1}{2}_\lambda f_i, a^\frac{1}{2}_\lambda f_i\rangle|
\leq \|x\|\|a^\frac{1}{2} f_i\|^2$ holds, hence for the fixed $x\in\mathcal{M}$
the series $\sum\limits_{i=1}^{+\infty}\langle x a^\frac{1}{2}_\lambda f_i, a^\frac{1}{2}_\lambda f_i\rangle$ converge uniformly with
repect to $\lambda$ on $(0,+\infty)$, therefore
$a^\frac{1}{2}\varphi a^\frac{1}{2}(x)\equiv\lim\limits_{\lambda\to +\infty} 
\varphi(a^\frac{1}{2}_\lambda x a^\frac{1}{2}_\lambda)=\lim\limits_{\lambda\to+\infty}\sum\limits_{i=1}^{+\infty}\langle 
x a^\frac{1}{2}_\lambda f_i, a^\frac{1}{2}_\lambda f_i\rangle
=\sum\limits_{i=1}^{+\infty}\langle x a^\frac{1}{2} f_i, a^\frac{1}{2} f_i\rangle$.
The latter implies that $a^\frac{1}{2}\varphi a^\frac{1}{2}(x)$ is well-defined for any $x\in\mathcal{M}$
and $a^\frac{1}{2}\varphi a^\frac{1}{2}\in\mathcal{M}_*^+$.

For $\varphi\in\mathfrak{D}_a$ there exist $\varphi_1$, $\varphi_2$, $\varphi_3$, $\varphi_4\in\mathfrak{D}_a^+$, such that 
$\varphi=\varphi_1-\varphi_2+i\varphi_3-i\varphi_4$.
We have proved that $a^\frac{1}{2}\varphi_i a^\frac{1}{2}\in \mathcal{M}_*^+, i=\overline{1,4}$,
hence $a^\frac{1}{2}\varphi_1 a^\frac{1}{2}(x)-a^\frac{1}{2}\varphi_2 a^\frac{1}{2}(x)
+ia^\frac{1}{2}\varphi_3 a^\frac{1}{2}(x)-ia^\frac{1}{2}\varphi_4 a^\frac{1}{2}(x)=
\lim\limits_{\lambda\to+\infty}\varphi_1(a^\frac{1}{2}_\lambda xa^\frac{1}{2}_\lambda)-\varphi_2(a^\frac{1}{2}_\lambda xa^\frac{1}{2}_\lambda)
+i\varphi_3(a^\frac{1}{2}_\lambda x a^\frac{1}{2}_\lambda)-i\varphi_4(a^\frac{1}{2}_\lambda x a^\frac{1}{2}_\lambda)=
\lim\limits_{\lambda\to+\infty}\varphi(a^\frac{1}{2}_\lambda xa^\frac{1}{2}_\lambda)\equiv a^\frac{1}{2} \varphi a^\frac{1}{2}(x)$,
therefore $a^\frac{1}{2} \varphi a^\frac{1}{2}(x)$ is well-defined for any $x\in\mathcal{M}$ and $a^\frac{1}{2} \varphi a^\frac{1}{2}\in\mathcal{M}_*$.

\end{proof}

\begin{corollary}
For $\varphi\in\mathfrak{D}_a^+$, such that $\varphi=\sum\limits_{i=1}^{+\infty}\langle \cdot f_i, f_i\rangle$, 
the equalities $a^\frac{1}{2}\varphi a^\frac{1}{2}(x)=\sum\limits_{i=1}^{+\infty}\langle x a^\frac{1}{2} f_i, a^\frac{1}{2} f_i\rangle
=\sum\limits_{i=1}^{+\infty}\widehat{a^\frac{1}{2} x a^\frac{1}{2}}(f_i, f_i)$ hold for all $x\in\mathcal{M}$.
\end{corollary}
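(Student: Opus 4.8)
The plan is to read off both equalities directly from the proof of Theorem~\ref{definition_afa}, since almost all the work has already been done there. For the first equality, I would recall that in that proof, for $\varphi\in\mathfrak{D}_a^+$ with a representation $\varphi=\sum_{i=1}^{+\infty}\langle\cdot\, f_i,f_i\rangle$, it was established that $\lim_{\lambda\to+\infty}\varphi(a^{\frac12}_\lambda x a^{\frac12}_\lambda)=\sum_{i=1}^{+\infty}\langle x a^{\frac12} f_i, a^{\frac12} f_i\rangle$ for every $x\in\mathcal{M}$. Since the left-hand side is by definition $a^{\frac12}\varphi a^{\frac12}(x)$, the first equality is immediate. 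I would also note explicitly that this shows the value $\sum_{i}\langle x a^{\frac12} f_i, a^{\frac12} f_i\rangle$ does not depend on the chosen representation of $\varphi$, because the defining limit $\lim_{\lambda}\varphi(a^{\frac12}_\lambda x a^{\frac12}_\lambda)$ manifestly does not.

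For the second equality, nothing is needed beyond unwinding notation: by the definition of the sesquilinear form $\widehat{a^{\frac12}xa^{\frac12}}$ on $D(a^{\frac12})\times D(a^{\frac12})$, namely $\widehat{a^{\frac12}xa^{\frac12}}(f,g)=\langle x a^{\frac12} f, a^{\frac12} g\rangle$, one has $\widehat{a^{\frac12}xa^{\frac12}}(f_i,f_i)=\langle x a^{\frac12} f_i, a^{\frac12} f_i\rangle$ for each $i$, and since $\{f_i\}\subset D(a^{\frac12})$ (as already observed in the proof of the theorem) the substitution is legitimate term by term. Hence the two series coincide summand by summand.

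If one wants the corollary to stand a little more on its own, I would add a one-line convergence check: for $\varphi\in\mathfrak{D}_a^+$ we have $\sum_{i=1}^{+\infty}\|a^{\frac12} f_i\|^2=\varphi(a)<+\infty$, so the bound $|\langle x a^{\frac12} f_i, a^{\frac12} f_i\rangle|\le\|x\|\,\|a^{\frac12} f_i\|^2$ gives absolute convergence of $\sum_i\langle x a^{\frac12} f_i, a^{\frac12} f_i\rangle$, with value bounded in modulus by $\|x\|\,\varphi(a)$. I do not anticipate any real obstacle here; this is a bookkeeping corollary, and the only thing worth being careful about is the representation-independence remark above, which is what makes the stated identities well posed.
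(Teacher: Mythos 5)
Your proposal is correct and matches the paper's intent exactly: the corollary is read off from the computation in the proof of Theorem~\ref{definition_afa}, where $a^\frac{1}{2}\varphi a^\frac{1}{2}(x)=\sum_{i=1}^{+\infty}\langle x a^\frac{1}{2} f_i, a^\frac{1}{2} f_i\rangle$ is already established, and the second equality is just the definition of $\widehat{a^\frac{1}{2}xa^\frac{1}{2}}$. Your added remarks on absolute convergence and representation-independence are sound and harmless, but nothing beyond the theorem's proof is needed.
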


\begin{definition}\label{8app}
For $\varphi\in \mathfrak{D}_a$ and a sesquilinear form $\widehat{a^\frac{1}{2}xa^\frac{1}{2}}\in\mathcal{S}_a(\mathcal{M})$ we consider
$\varphi(\widehat{a^\frac{1}{2}x a^\frac{1}{2}})\equiv \widehat{a^\frac{1}{2}x a^\frac{1}{2}}(\varphi):=a^\frac{1}{2}\varphi a^\frac{1}{2}(x)$.
\end{definition}

\begin{lemma}\label{sql->linf}
The latter definition indentifies a sesquilinear form $\widehat{a^\frac{1}{2}x a^\frac{1}{2}}\in\mathcal{S}_a(\mathcal{M})$ with
the continuous linear functional on $(\mathfrak{D}_a^h, \|\cdot\|_a)$
\end{lemma}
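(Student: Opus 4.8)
The plan is to check, in turn, that the recipe $\varphi\mapsto\widehat{a^{1/2}xa^{1/2}}(\varphi):=a^{1/2}\varphi a^{1/2}(x)$ produces a functional on $\mathfrak{D}_a^h$ that is (i) well defined, i.e.\ depends on the sesquilinear form and not on the particular operator $x\in\mathcal{M}$ representing it; (ii) $\mathbb{R}$-linear in $\varphi$; and (iii) bounded for the $a$-seminorm. Step (ii) is essentially free: Theorem \ref{definition_afa} already produces a map $\varphi\mapsto a^{1/2}\varphi a^{1/2}$ from $\mathfrak{D}_a$ to $\mathcal{M}_*$, and its definition via the limit in $\lambda$ (equivalently the series formula of the Corollary) is visibly additive and homogeneous, so fixing $x$ and evaluating gives a linear functional on $\mathfrak{D}_a$, hence on the real subspace $\mathfrak{D}_a^h$.

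For step (iii) I would prove the cleaner statement $\|a^{1/2}\varphi a^{1/2}\|_{\mathcal{M}_*}\le\|\varphi\|_a$ for every $\varphi\in\mathfrak{D}_a^h$, from which $|\widehat{a^{1/2}xa^{1/2}}(\varphi)|\le\|x\|\,\|a^{1/2}\varphi a^{1/2}\|\le\|x\|\,\|\varphi\|_a$ follows at once. Fix a decomposition $\varphi=\varphi_1-\varphi_2$ with $\varphi_1,\varphi_2\in\mathfrak{D}_a^+$; by Theorem \ref{definition_afa} each $a^{1/2}\varphi_i a^{1/2}$ lies in $\mathcal{M}_*^+$, so $\|a^{1/2}\varphi_i a^{1/2}\|=a^{1/2}\varphi_i a^{1/2}(\mathbf{1})$. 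Writing $\varphi_i=\sum_j\langle\cdot\, f_{ij},f_{ij}\rangle$, the Corollary gives $a^{1/2}\varphi_i a^{1/2}(\mathbf{1})=\sum_j\|a^{1/2}f_{ij}\|^2$, and since $\|a_\lambda^{1/2}f\|^2\nearrow\|a^{1/2}f\|^2$ this sum equals $\lim_{\lambda\to+\infty}\sum_j\|a_\lambda^{1/2}f_{ij}\|^2=\lim_{\lambda\to+\infty}\varphi_i(a_\lambda)=\varphi_i(a)$. Hence $\|a^{1/2}\varphi a^{1/2}\|\le\varphi_1(a)+\varphi_2(a)$, and passing to the infimum over all such decompositions gives the claimed bound; in particular the functional is continuous on $(\mathfrak{D}_a^h,\|\cdot\|_a)$ with norm at most $\|x\|$.

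For step (i), note that for $\varphi=\sum_j\langle\cdot\, f_j,f_j\rangle\in\mathfrak{D}_a^+$ the Corollary expresses $\widehat{a^{1/2}xa^{1/2}}(\varphi)=\sum_j\widehat{a^{1/2}xa^{1/2}}(f_j,f_j)$ purely in terms of the diagonal values of the form, and on $\mathfrak{D}_a^h$ the value is then determined because $a^{1/2}(\cdot)a^{1/2}$ is already well defined on all of $\mathfrak{D}_a$ by Theorem \ref{definition_afa}; so the functional depends only on the form. Conversely, testing against $\varphi=\langle\cdot\, f,f\rangle\in\mathfrak{D}_a^+$ for arbitrary $f\in D(a^{1/2})$ recovers $\widehat{a^{1/2}xa^{1/2}}(f,f)$, and by polarization the diagonal determines the whole form, so the correspondence $\mathcal{S}_a(\mathcal{M})\to(\mathfrak{D}_a^h)^*$ is moreover injective. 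The only point that needs a little care is the identity $a^{1/2}\varphi_i a^{1/2}(\mathbf{1})=\varphi_i(a)$ when $a$ is unbounded, where one must interchange $\lim_\lambda$ with the infinite sum; but this is exactly the (monotone/uniform) interchange already justified in the proof of Theorem \ref{definition_afa}, so no genuinely new difficulty arises and the lemma reduces to assembling the preceding results.
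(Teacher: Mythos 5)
Your proposal is correct and follows essentially the same route as the paper: boundedness is obtained by taking an arbitrary decomposition $\varphi=\varphi_1-\varphi_2$ with $\varphi_i\in\mathfrak{D}_a^+$, using positivity of $a^\frac{1}{2}\varphi_i a^\frac{1}{2}$ to get $|a^\frac{1}{2}\varphi a^\frac{1}{2}(x)|\leq\|x\|(\varphi_1(a)+\varphi_2(a))$, and passing to the infimum, with linearity coming from Theorem \ref{definition_afa}. Your additional checks (well-definedness via the diagonal values of the form and injectivity of the correspondence) are correct but go beyond what the paper's proof records; they do not change the argument.
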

\begin{proof}
The linearity of $\widehat{a^\frac{1}{2}x a^\frac{1}{2}}$ is evident.
For $\varphi\in\mathfrak{D}_a^h$ consider arbitary $\varphi_1, \varphi_2\in\mathfrak{D}_a^+$, such that $\varphi=\varphi_1-\varphi_2$,
then $|\widehat{a^\frac{1}{2}x a^\frac{1}{2}}(\varphi)|=|a^\frac{1}{2}\varphi a^\frac{1}{2}(x)|\leq
|a^\frac{1}{2}\varphi_1 a^\frac{1}{2}(x)|+|a^\frac{1}{2}\varphi_2 a^\frac{1}{2}(x)|\leq \|x\|(\varphi_1(a)+\varphi_2(a))$,
which implies $|\widehat{a^\frac{1}{2}x a^\frac{1}{2}}(\varphi)|\leq \|x\|\|\varphi\|_a$ and therefore
$\widehat{a^\frac{1}{2}x a^\frac{1}{2}}$ is a continuous linear functional.

\end{proof}

\begin{theorem}\label{linf<->sql}
For an injective operator $a$ the idenitification of the sesquilinear forms in
$\mathcal{S}_a(\mathcal{M}^{sa})$ with the continuous linear functionals on $(\mathfrak{D}_a^h, \|\cdot\|_a)$ lead to the
identification of $(\mathcal{S}_a(\mathcal{M}^{sa}), p_a)$ with $L_\infty^{sa}(a)$.
\end{theorem}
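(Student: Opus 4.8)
The plan is to realize the identification through the explicit linear map $\Phi$ sending $\widehat{a^\frac{1}{2}xa^\frac{1}{2}}\in\mathcal{S}_a(\mathcal{M}^{sa})$ to the continuous linear functional $\varphi\mapsto a^\frac{1}{2}\varphi a^\frac{1}{2}(x)$ on $(\mathfrak{D}_a^h,\|\cdot\|_a)$ furnished by Lemma~\ref{sql->linf}, and then to verify that $\Phi$ is a bijection onto $L_\infty^{sa}(a)$ which is simultaneously isometric and order-preserving. Since by \cite[Proposition 1]{SkvTik98} the Banach space $(L_\infty^{sa}(a),\|\cdot\|^a)$ is precisely the dual of $L_1^h(a)$, every such $\|\cdot\|_a$-continuous functional already belongs to $L_\infty^{sa}(a)$, so $\Phi$ does map into $L_\infty^{sa}(a)$. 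Moreover $\Phi$ is injective: for injective $a$ the range of $a^\frac{1}{2}$ is dense in $H$ (as $\overline{\operatorname{ran}a^\frac{1}{2}}=(\ker a^\frac{1}{2})^{\perp}=(\ker a)^{\perp}=H$), so $\widehat{a^\frac{1}{2}xa^\frac{1}{2}}=\widehat{a^\frac{1}{2}ya^\frac{1}{2}}$ forces $x=y$ in $\mathcal{M}^{sa}$, and in particular $p_a$ is unambiguously defined on $\mathcal{S}_a(\mathcal{M}^{sa})$.

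The isometry (together with the order correspondence) I would extract from the equivalence, for $x\in\mathcal{M}^{sa}$ and $\lambda\geq 0$,
$$-\lambda\,\widehat{a^\frac{1}{2}\mathbf{1}a^\frac{1}{2}}\leq\widehat{a^\frac{1}{2}xa^\frac{1}{2}}\leq\lambda\,\widehat{a^\frac{1}{2}\mathbf{1}a^\frac{1}{2}}\iff -\lambda a\leq\Phi(\widehat{a^\frac{1}{2}xa^\frac{1}{2}})\leq\lambda a,$$
where on the right $a$ denotes the element $\varphi\mapsto\varphi(a)$ of $(\mathfrak{D}_a^h)^{al}$, which is exactly $\Phi(\widehat{a^\frac{1}{2}\mathbf{1}a^\frac{1}{2}})$ by the corollary to Theorem~\ref{definition_afa}. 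For "$\Rightarrow$", writing $\varphi=\sum_{i=1}^{+\infty}\langle\cdot f_i,f_i\rangle\in\mathfrak{D}_a^+$ and invoking that same corollary gives $a^\frac{1}{2}\varphi a^\frac{1}{2}(x)=\sum_i\widehat{a^\frac{1}{2}xa^\frac{1}{2}}(f_i,f_i)$, and each summand lies between $\pm\lambda\|a^\frac{1}{2}f_i\|^{2}$, so $|\Phi(\widehat{a^\frac{1}{2}xa^\frac{1}{2}})(\varphi)|\leq\lambda\varphi(a)$ for all $\varphi\in\mathfrak{D}_a^+$. For "$\Leftarrow$" one merely substitutes the vector functionals $\varphi=\langle\cdot f,f\rangle\in\mathfrak{D}_a^+$, $f\in D(a^\frac{1}{2})$. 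As $p_a$ and $\|\cdot\|^a$ are by definition the infima of $\lambda$ over the left- and the right-hand conditions respectively, this equivalence shows at once that $\Phi$ is isometric and an order isomorphism onto its image. (Alternatively, $|\Phi(\widehat{a^\frac{1}{2}xa^\frac{1}{2}})(\varphi)|\leq\|x\|\|\varphi\|_a$ from Lemma~\ref{sql->linf}, combined with testing against $\varphi=\langle\cdot f,f\rangle/\|a^\frac{1}{2}f\|^{2}$, already gives $\|\Phi(\widehat{a^\frac{1}{2}xa^\frac{1}{2}})\|=\|x\|=p_a(\widehat{a^\frac{1}{2}xa^\frac{1}{2}})$.)

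The remaining and, I expect, genuinely non-formal task is surjectivity of $\Phi$ onto $L_\infty^{sa}(a)$. Given $x\in L_\infty^{sa}(a)$ with $-\lambda a\leq x\leq\lambda a$, extend $x$ to $\mathfrak{D}_a$ by complex linearity; for $f,g\in D(a^\frac{1}{2})$ polarisation puts $\langle\cdot f,g\rangle$ into $\mathfrak{D}_a$, and I set $B(a^\frac{1}{2}f,a^\frac{1}{2}g):=x(\langle\cdot f,g\rangle)$. Injectivity of $a$ makes $B$ well defined on $(\operatorname{ran}a^\frac{1}{2})\times(\operatorname{ran}a^\frac{1}{2})$; it is sesquilinear; it is Hermitian because $x$ is $\mathbb{R}$-valued on $\mathfrak{D}_a^h$, so writing $\langle\cdot f,g\rangle=\omega_1+i\omega_2$ with $\omega_1,\omega_2\in\mathfrak{D}_a^h$ one has $\langle\cdot g,f\rangle=\omega_1-i\omega_2$ and hence $x(\langle\cdot g,f\rangle)=\overline{x(\langle\cdot f,g\rangle)}$; and it is bounded by $\lambda$ on the diagonal since $|x(\langle\cdot f,f\rangle)|\leq\lambda\|a^\frac{1}{2}f\|^{2}$. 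Density of $\operatorname{ran}a^\frac{1}{2}$ then extends $B$ to a bounded Hermitian form on $H$, so $B(\xi,\eta)=\langle y\xi,\eta\rangle$ for a unique selfadjoint operator $y$ with $\|y\|\leq\lambda$. The heart of the matter is $y\in\mathcal{M}$: for any unitary $u\in\mathcal{M}'$ and any $z\in\mathcal{M}$ one has $\langle z\,uf,ug\rangle=\langle u^{*}zu\,f,g\rangle=\langle zf,g\rangle$, so $\langle\cdot uf,ug\rangle=\langle\cdot f,g\rangle$ as functionals on $\mathcal{M}$; plugging this into the definition of $B$ and using $u a^\frac{1}{2}=a^\frac{1}{2}u$ on $D(a^\frac{1}{2})$ yields $\langle u^{*}yu\,a^\frac{1}{2}f,a^\frac{1}{2}g\rangle=\langle y\,a^\frac{1}{2}f,a^\frac{1}{2}g\rangle$ for all $f,g\in D(a^\frac{1}{2})$, whence $u^{*}yu=y$ by density and $y\in\mathcal{M}''=\mathcal{M}$. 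Finally, for $\varphi=\sum_i\langle\cdot f_i,f_i\rangle\in\mathfrak{D}_a^+$ the partial sums converge to $\varphi$ in $\|\cdot\|_a$ (indeed $\|\varphi-\sum_{i\leq n}\langle\cdot f_i,f_i\rangle\|_a\leq\sum_{i>n}\|a^\frac{1}{2}f_i\|^{2}\to 0$), so by $\|\cdot\|_a$-continuity of $x$ and the corollary to Theorem~\ref{definition_afa}, $\Phi(\widehat{a^\frac{1}{2}ya^\frac{1}{2}})(\varphi)=a^\frac{1}{2}\varphi a^\frac{1}{2}(y)=\sum_i\langle y a^\frac{1}{2}f_i,a^\frac{1}{2}f_i\rangle=\sum_i x(\langle\cdot f_i,f_i\rangle)=x(\varphi)$, and this extends to all $\varphi\in\mathfrak{D}_a^h$; thus $\Phi(\widehat{a^\frac{1}{2}ya^\frac{1}{2}})=x$. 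Assembling the pieces, $\Phi$ is a surjective linear isometry and order isomorphism of $(\mathcal{S}_a(\mathcal{M}^{sa}),p_a)$ onto $L_\infty^{sa}(a)$. The only delicate point is the verification that the operator $y$ extracted from a given element of $L_\infty^{sa}(a)$ returns to $\mathcal{M}$; the rest is routine manipulation of the countable representations of positive normal functionals and of the density of $\operatorname{ran}a^\frac{1}{2}$.
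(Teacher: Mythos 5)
Your proof is correct and follows essentially the same route as the paper's: each form is a continuous (hence, by \cite[Proposition 1]{SkvTik98}, $a$-dominated) functional, and conversely an element $x\in L_\infty^{sa}(a)$ is evaluated on the vector functionals $\langle\cdot f,f\rangle$, polarized, transferred to $\mathrm{Im}(a^{\frac{1}{2}})$ as a bounded Hermitian form, and realized as $\widehat{a^{\frac{1}{2}}ya^{\frac{1}{2}}}$ with the isometry read off from the order conditions defining $p_a$ and $\|\cdot\|^a$. The only differences are that you make explicit two details the paper leaves implicit — the commutant/double-commutant argument showing the operator $y$ lies in $\mathcal{M}$ rather than merely in $B(H)$, and the final check (via $\|\cdot\|_a$-convergence of the partial sums of $\sum_i\langle\cdot f_i,f_i\rangle$) that $\widehat{a^{\frac{1}{2}}ya^{\frac{1}{2}}}$ agrees with $x$ on all of $\mathfrak{D}_a^h$ and not only on vector functionals — which completes the same argument rather than replacing it.
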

\begin{proof}
Lemma \ref{sql->linf} states that arbitary $\widehat{a^\frac{1}{2}x a^\frac{1}{2}}\in\mathcal{S}_a(\mathcal{M}^{sa})$
is a continuous linear functional on $(\mathfrak{D}_a^h, \|\cdot\|_a)$. Also, $\widehat{a^\frac{1}{2}x a^\frac{1}{2}}$ is real-valued
on $\mathfrak{D}_a^h$, hence $\widehat{a^\frac{1}{2}x a^\frac{1}{2}}\in L_\infty^{sa}(a)$.

Suppose $x\in L_\infty^h(a)$ and $\|x\|^a=1$.
For any $f\in D(a^\frac{1}{2})$ there exists a positive functional $\psi_f:=\langle \cdot f, f \rangle$
and $|x(\psi_f)|\leq\psi_f(a)=\|a^\frac{1}{2}f\|^2$.
Define a form $\widehat{x}(f)$ on $D(a^\frac{1}{2})$ as $\widehat{x}(f):=x(\psi_f)$.
Now, for $f,g$ in $D(a^\frac{1}{2})$ we define a sesquilinear form
$\widehat{x}(f,g)$ on $D(a^\frac{1}{2})\times D(a^\frac{1}{2})$
by the polarization identity
$\widehat{x}(f,g):=\frac{1}{4} (\widehat{x}(f+g)-\widehat{x}(f-g)+i\widehat{x}(f+ig)-i\widehat{x}(f-ig))$.
Note that $x(\varphi)$ is real-valued for every $\varphi$ in $\mathfrak{D}_a^h$.
A straightforward calculation gives the equality $\widehat{x}(f,g)=\overline{\widehat{x}(g, f)}$ and the linearity
by the first argument.
From the above arguments we conclude that
for any $x$ in $L_\infty^h(a)$ there exists a sesquilinear form $\widehat{x}$ on
$D(a^\frac{1}{2})\times D(a^\frac{1}{2})$,
such that $\widehat{x}(f,g)=\overline{\widehat{x}(g, f)}$.

We consider a sesquilinear form $\widehat{y}$ defined
with the equality $\widehat{y}(f,g)=\widehat{x}(a^{-\frac{1}{2}}f, a^{-\frac{1}{2}}g)$
on $\mathrm{Im}(a^{\frac{1}{2}})\times \mathrm{Im}(a^{\frac{1}{2}})$.
From
$|\widehat{y}(f,f)|=|\widehat{x}(a^{-\frac{1}{2}}f, a^{-\frac{1}{2}}f)|\leq \|a^\frac{1}{2} a^{-\frac{1}{2}}f\|^2=\|f\|^2$,
it follows that $\widehat{y}$ is bounded, hence there exists $y\in \mathcal{M}$,
such that $\widehat{y}(f,g)=\langle y f, g \rangle$.
Also, $\widehat{x}(f,g)=\widehat{y}(a^\frac{1}{2}f, a^\frac{1}{2}g)=\langle ya^\frac{1}{2}f, a^\frac{1}{2}g\rangle$
for any $f$ and $g$ in $D(a^\frac{1}{2})$ and $\widehat{x}=\widehat{a^\frac{1}{2}y a^\frac{1}{2}}$ by definition.
Since $\langle y f, g \rangle = \widehat{y}(f,g) = \widehat{x}(a^{-\frac{1}{2}} f, a^{-\frac{1}{2}} g)=
\overline{\widehat{x}(a^{-\frac{1}{2}} g, a^{-\frac{1}{2}} f)}=
\overline{\widehat{y}(g,f)}=\overline{\langle y g, f \rangle}=\langle f, y g \rangle$
it follows that $y$ is selfadjoint. Therefore, we identify $x\in L_\infty^{sa}(a)$ with 
$\widehat{a^\frac{1}{2}ya^\frac{1}{2}}\in \mathcal{S}_a(\mathcal{M}^{sa})$.

The equality $\|x\|^a=\inf\{\lambda|-\lambda a \leq x \leq \lambda a\}=
\inf\{\lambda|-\lambda \widehat{a^\frac{1}{2}\mathbf{1}a^\frac{1}{2}}\leq
\widehat{a^\frac{1}{2} y a^\frac{1}{2}} \leq \lambda \widehat{a^\frac{1}{2}\mathbf{1}a^\frac{1}{2}}\}
=p_a(\widehat{a^\frac{1}{2} y a^\frac{1}{2}})$ completes the proof.

\end{proof}

\begin{remark} \label{B_a(M^sa)->M^sa}
If an operator $a$ is injective then $\inf\{\lambda|-\lambda a \leq a^\frac{1}{2}x a^\frac{1}{2} \leq \lambda a\}=\|x\|$
and the latter implies that the mapping $x\mapsto \widehat{a^\frac{1}{2}x a^\frac{1}{2}}$
is an isometrical isomorphism of $\mathcal{M}^{sa}$ onto $(\mathcal{S}_a(\mathcal{M}^{sa}), p_a)$.
\end{remark}

\begin{corollary}\label{isomorf_u}
For an injective operator $a$ the mapping $u: x\in\mathcal{M}^{sa}\mapsto \widehat{a^\frac{1}{2} x a^\frac{1}{2}}\in L_\infty^{sa}(a)$ 
is an isometrical isomporphism of $\mathcal{M}^{sa}$ onto $L_\infty^{sa}(a)$. Moreover, the adjoint
mapping $u^t$ is an isometrical isomorphism of $(L_\infty^{sa}(a))^*$ onto $(\mathcal{M}^*)^h$.
\end{corollary}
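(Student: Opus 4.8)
The plan is to obtain the first assertion by composing isometric isomorphisms that are already available, and the second by passing to transposes and then matching $(\mathcal{M}^{sa})^*$ with $(\mathcal{M}^*)^h$. For the first part, note that $u$ factors as
\[
\mathcal{M}^{sa}\;\xrightarrow{\;v\;}\;(\mathcal{S}_a(\mathcal{M}^{sa}),p_a)\;\xrightarrow{\;\iota\;}\;L_\infty^{sa}(a),
\]
where $v(x)=\widehat{a^\frac{1}{2}x a^\frac{1}{2}}$ and $\iota$ is the map identifying a form $\widehat{a^\frac{1}{2}x a^\frac{1}{2}}$ with the continuous linear functional $\varphi\mapsto a^\frac{1}{2}\varphi a^\frac{1}{2}(x)$ on $(\mathfrak{D}_a^h,\|\cdot\|_a)$. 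By Remark~\ref{B_a(M^sa)->M^sa}, which uses injectivity of $a$, the map $v$ is an isometric isomorphism of $\mathcal{M}^{sa}$ onto $(\mathcal{S}_a(\mathcal{M}^{sa}),p_a)$, and by Theorem~\ref{linf<->sql} the map $\iota$ is an isometric isomorphism of $(\mathcal{S}_a(\mathcal{M}^{sa}),p_a)$ onto $L_\infty^{sa}(a)$. Hence $u=\iota\circ v$ is an isometric isomorphism of $\mathcal{M}^{sa}$ onto $L_\infty^{sa}(a)$, which is the first claim.

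For the second assertion I would invoke the elementary fact that the transpose of a surjective linear isometry of normed spaces is again a surjective linear isometry. Applied to $u$, this shows that $u^t\colon\phi\mapsto\phi\circ u$ is an isometric isomorphism of $(L_\infty^{sa}(a))^*$ onto $(\mathcal{M}^{sa})^*$; surjectivity is immediate because $u$ is onto, so $\psi\mapsto\psi\circ u^{-1}$ is a two-sided inverse of $u^t$. It then remains to identify $(\mathcal{M}^{sa})^*$, the dual of the real Banach space $\mathcal{M}^{sa}$, with $(\mathcal{M}^*)^h$. For this I would use the restriction map $\phi\mapsto\phi|_{\mathcal{M}^{sa}}$: a Hermitian $\phi\in\mathcal{M}^*$ is real-valued on $\mathcal{M}^{sa}$, so this map takes values in $(\mathcal{M}^{sa})^*$; conversely, writing $\mathcal{M}=\mathcal{M}^{sa}+i\mathcal{M}^{sa}$, every $\psi\in(\mathcal{M}^{sa})^*$ extends to $\psi(x+iy):=\psi(x)+i\psi(y)$ ($x,y\in\mathcal{M}^{sa}$), which is routinely checked to be a well-defined Hermitian functional in $\mathcal{M}^*$, and the two maps are mutually inverse.

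The one point that still requires a genuine argument — and hence the main obstacle, although it is a classical fact — is that the restriction map $(\mathcal{M}^*)^h\to(\mathcal{M}^{sa})^*$ is isometric, i.e.\ $\|\phi\|=\|\phi|_{\mathcal{M}^{sa}}\|$ for every Hermitian $\phi\in\mathcal{M}^*$. The inequality $\|\phi|_{\mathcal{M}^{sa}}\|\le\|\phi\|$ is trivial; for the reverse, given $x\in\mathcal{M}$ with $\|x\|\le1$ one picks $\theta\in\mathbb{R}$ with $e^{i\theta}\phi(x)=|\phi(x)|$ and uses that $\phi$ is Hermitian to get $|\phi(x)|=\mathrm{Re}\,\phi(e^{i\theta}x)=\phi\big(\tfrac{1}{2}(e^{i\theta}x+e^{-i\theta}x^*)\big)$, where $\tfrac{1}{2}(e^{i\theta}x+e^{-i\theta}x^*)\in\mathcal{M}^{sa}$ has norm at most $1$; hence $|\phi(x)|\le\|\phi|_{\mathcal{M}^{sa}}\|$. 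Composing this isometric identification of $(\mathcal{M}^{sa})^*$ with $(\mathcal{M}^*)^h$ and the isometric isomorphism $u^t$ yields the isometric isomorphism of $(L_\infty^{sa}(a))^*$ onto $(\mathcal{M}^*)^h$ asserted in the corollary.
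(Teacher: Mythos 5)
Your proposal is correct and follows essentially the route the paper intends: the corollary is stated as an immediate consequence of Theorem~\ref{linf<->sql} together with Remark~\ref{B_a(M^sa)->M^sa}, with the second assertion being the standard duality fact that the transpose of a surjective isometry is a surjective isometry, combined with the classical isometric identification of $(\mathcal{M}^{sa})^*$ with $(\mathcal{M}^*)^h$. Your filling in of that last identification (including the norm equality via $\mathrm{Re}\,\phi(e^{i\theta}x)=\phi\bigl(\tfrac{1}{2}(e^{i\theta}x+e^{-i\theta}x^*)\bigr)$) is accurate and just makes explicit what the paper leaves implicit.
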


\begin{remark}
For an injective operator $a$ we already identify the elements of $\mathfrak{D}_a^h$ with the corresponding elements of the complition $L_1^h(a)$.
Also, since $(L_\infty^h(a))^*$ is isometrically isomorphic to the second dual of $L_1^h(a)$,
it is convinient to identify the elements of $L_1^h(a)$ with the corresponding elements of $(L_\infty^h(a))^*$.
Then for $u^t$ from Corollary \ref{isomorf_u} and $\varphi\in\mathfrak{D}_a^h$ the equality 
$u^t(\varphi)=a^\frac{1}{2}\varphi a^\frac{1}{2}$ holds, since $u^t(\varphi)(x)=
\varphi(u(x))=\varphi(\widehat{a^\frac{1}{2} x a^\frac{1}{2}})=a^\frac{1}{2}\varphi a^\frac{1}{2}(x)$ for all $x\in\mathcal{M}^{sa}.$
\end{remark}

\begin{definition}
For an injective operator $a$ and a linear functional $\varphi\in(L_\infty^{sa}(a))^*$
we denote $u^t(\varphi)\in(\mathcal{M}^*)^h$ as $a^\frac{1}{2}\varphi a^\frac{1}{2}$, where $u^t$ is 
the isomorphism from Corollary \ref{isomorf_u}.
\end{definition}

\begin{theorem}
For an injective operator $a$ the mapping $v: \varphi \in L_1^h(a) \mapsto a^\frac{1}{2}\varphi a^\frac{1}{2}\in \mathcal{M}_*^h$
is an isometrical isomorphism of $L_1^h(a)$ onto $\mathcal{M}_*^h$.
\end{theorem}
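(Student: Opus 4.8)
The plan is to exhibit $v$ as the restriction of the isometric isomorphism $u^t$ of Corollary~\ref{isomorf_u} to the canonical copy of $L_1^h(a)$ inside $(L_\infty^{sa}(a))^*$, and then to pin down its image as $\mathcal{M}_*^h$.

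First I would check that $v$ is a well-defined linear isometry. Under the canonical isometric embedding $L_1^h(a)\hookrightarrow(L_\infty^{sa}(a))^*=(L_1^h(a))^{**}$ (the identification already used in the text) one has $v=u^t|_{L_1^h(a)}$: indeed $u^t(\varphi)=a^\frac{1}{2}\varphi a^\frac{1}{2}$ for $\varphi\in\mathfrak{D}_a^h$ by the Remark above, $\mathfrak{D}_a^h$ is dense in $L_1^h(a)$, and both $v$ and $u^t$ are norm-continuous. Since $u^t$ is an isometry, so is $v$; in particular $v$ is injective and $v(L_1^h(a))$ is complete, hence norm-closed in $(\mathcal{M}^*)^h$. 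Next, for $\varphi\in\mathfrak{D}_a^h$ Theorem~\ref{definition_afa} shows $v(\varphi)=a^\frac{1}{2}\varphi a^\frac{1}{2}\in\mathcal{M}_*^h$; since $\mathcal{M}_*^h$ is norm-closed in $(\mathcal{M}^*)^h$, $v$ is continuous and $\mathfrak{D}_a^h$ is dense in $L_1^h(a)$, it follows that $v(L_1^h(a))\subseteq\mathcal{M}_*^h$. Thus the whole statement reduces to surjectivity of $v$ onto $\mathcal{M}_*^h$.

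For surjectivity, since $v(L_1^h(a))$ is a norm-closed subspace of $\mathcal{M}_*^h$ containing $v(\mathfrak{D}_a^h)=\{a^\frac{1}{2}\varphi a^\frac{1}{2}\mid\varphi\in\mathfrak{D}_a^h\}$, it is enough to prove that this last set is norm-dense in $\mathcal{M}_*^h$. I would do this by a Hahn--Banach argument, using that the real dual of the real Banach space $\mathcal{M}_*^h$ is $\mathcal{M}^{sa}$: it suffices to verify that an $x\in\mathcal{M}^{sa}$ with $a^\frac{1}{2}\varphi a^\frac{1}{2}(x)=0$ for all $\varphi\in\mathfrak{D}_a^h$ must be $\mathit{0}$. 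But by Definition~\ref{8app} and the definition of $u$ one has $a^\frac{1}{2}\varphi a^\frac{1}{2}(x)=\varphi(\widehat{a^\frac{1}{2} x a^\frac{1}{2}})=(u(x))(\varphi)$, so this hypothesis says that the continuous functional $u(x)\in L_\infty^{sa}(a)=(L_1^h(a))^*$ vanishes on the dense subset $\mathfrak{D}_a^h$, whence $u(x)=0$ and, by injectivity of $u$ (Corollary~\ref{isomorf_u}), $x=\mathbf{0}$. Hence $v(\mathfrak{D}_a^h)$ is dense in $\mathcal{M}_*^h$, and combined with the previous step $v(L_1^h(a))=\mathcal{M}_*^h$; together with the isometry property this makes $v$ an isometric isomorphism of $L_1^h(a)$ onto $\mathcal{M}_*^h$.

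The hard part is the density of $\{a^\frac{1}{2}\varphi a^\frac{1}{2}\mid\varphi\in\mathfrak{D}_a^h\}$ in $\mathcal{M}_*^h$; everything else is bookkeeping with the identifications already set up. The density rests on two ingredients: the duality $(\mathcal{M}_*^h)^*\cong\mathcal{M}^{sa}$, and the injectivity of $u$ — and the latter is precisely the place where the standing hypothesis that $a$ is injective is used (via Lemma~\ref{density} and the isomorphism of Corollary~\ref{isomorf_u}).
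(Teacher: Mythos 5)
Your proposal is correct, but your surjectivity step takes a genuinely different route from the paper's. Both arguments start the same way: $v=u^t|_{L_1^h(a)}$ is an isometry by Corollary~\ref{isomorf_u}, its image is a closed subspace of $\mathcal{M}_*^h$, and everything reduces to the norm-density of $v(\mathfrak{D}_a^h)=\{a^\frac{1}{2}\varphi a^\frac{1}{2}\mid\varphi\in\mathfrak{D}_a^h\}$ in $\mathcal{M}_*^h$. For that density the paper argues constructively: given $\psi\in\mathcal{M}_*^h$ it takes the explicit sequence $\psi_n=(\frac{1}{n}+a^\frac{1}{2})^{-1}\psi(\frac{1}{n}+a^\frac{1}{2})^{-1}\in\mathfrak{D}_a^h$, uses $a^\frac{1}{2}(\frac{1}{n}+a^\frac{1}{2})^{-1}\nearrow\mathbf{1}$ to get $v(\psi_n)\to\psi$ pointwise on $\mathcal{M}$, i.e.\ weakly in $\mathcal{M}_*$, and then upgrades weak density of this subspace to norm density via \cite[Theorem 3.12]{Rudin}. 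You instead run a Hahn--Banach annihilator argument: identifying $(\mathcal{M}_*^h)^*$ with $\mathcal{M}^{sa}$, an annihilating $x\in\mathcal{M}^{sa}$ satisfies $u(x)(\varphi)=a^\frac{1}{2}\varphi a^\frac{1}{2}(x)=0$ on the dense set $\mathfrak{D}_a^h$, hence $u(x)=0$ in $L_\infty^{sa}(a)$ and $x=\mathbf{0}$ by injectivity of $u$ (Corollary~\ref{isomorf_u}, resting on Remark~\ref{B_a(M^sa)->M^sa}, which is exactly where injectivity of $a$ enters). The two routes are dual to one another: your annihilator computation is essentially the paper's weak-density statement read through the bipolar/Mazur theorem. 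Yours is shorter and avoids the approximating sequence, at the price of invoking the (standard, but not stated in the paper) duality $(\mathcal{M}_*^h)^*\cong\mathcal{M}^{sa}$; the paper's version is more concrete, exhibiting explicitly how an arbitrary $\psi\in\mathcal{M}_*^h$ is approximated from $\mathfrak{D}_a^h$. There is no circularity in your use of Corollary~\ref{isomorf_u}, since it precedes the theorem, and your bookkeeping steps (image of $v$ lands in $\mathcal{M}_*^h$ by continuity, closedness of the isometric image) are all sound.
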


\begin{proof}
Note that $v$ is the restriction of $u^t$ from Corollary \ref{isomorf_u} to $L_1^h(a)$.
Let $\psi$ be an element of $\mathcal{M}_*^{h}$. The sequence 
$\psi_n=(\frac{1}{n}+a^\frac{1}{2})^{-1}\psi (\frac{1}{n}+a^\frac{1}{2})^{-1}$
lies in $\mathfrak{D}_a^h$. Since $a^\frac{1}{2}(\frac{1}{n}+a^\frac{1}{2})^{-1}\nearrow \mathbf{1}$,
it follows that $v(\psi_n)=a^\frac{1}{2}\psi_n a^\frac{1}{2}$ converges to $\psi$ pointwise (i.e.
$\lim\limits_{n\to +\infty} a^\frac{1}{2}\psi_n a^\frac{1}{2}(x)=\psi(x)$ for all $x\in \mathcal{M}$),
hence $v(\mathfrak{D}_a^h)$ is weakly dense in $\mathcal{M}_*^h$.
By \cite[Theorem 3.12]{Rudin}, $v(\mathfrak{D}_a^h)$ is also dense in $\mathcal{M}_*^h$ in the $\mathbf{1}$-norm topology,
and therefore $L_1^h(a)$ is isometrically isomorphic to $\mathcal{M}_*^h$.

\end{proof}

Summarizing the facts, that if operator $a$ is injective, then $(\mathcal{S}_a(\mathcal{M}^{sa}), p_a)$ is isometrically isomorphic to $\mathcal{M}^{sa}$
and that $\mathcal{M}$ is the natural complexification of $\mathcal{M}^{sa}$, it seems reasonable to 
identify the complexification of $\mathcal{S}_a(\mathcal{M}^{sa})$ with $\mathcal{S}_a(\mathcal{M})$ extending 
$p_a$ onto $\mathcal{S}_a(\mathcal{M})$ by the equality $p_a(\widehat{a^\frac{1}{2}xa^\frac{1}{2}})=\|x\|$.
For an injective operator $a$ further we identify $L_\infty(a)$ with $\mathcal{S}_a(\mathcal{M})$,
also we denote complexification of $L_1^h(a)$ as $L_1(a)$ extending $\|\cdot\|_a$ onto $L_1(a)$ by the equality
$\|\varphi\|_a=\|a^\frac{1}{2}\varphi a^\frac{1}{2}\|$.

The folowing corollary evidently follows from Theorem \ref{linf<->sql}
and Corollary \ref{isomorf_u}.

\begin{corollary}\label{L_inf->M}
For an injective operator $a$ the mapping $U: x\in \mathcal{M} \mapsto \widehat{a^\frac{1}{2}xa^\frac{1}{2}}\in L_\infty(a)$
is an isometrical isomorphism of $\mathcal{M}$ onto $L_\infty(a)$,
the adjoint mapping $U^t: \varphi \in L_\infty^*(a) \mapsto a^\frac{1}{2}\varphi a^\frac{1}{2}\in \mathcal{M}^*$,
also is an isometrical isomorphism of $L_\infty^*(a)$ onto $\mathcal{M}^*$. Moreover,
the restriction $V:=U^t|_{L_1(a)}$ is an isometrical isomorphism of $L_1(a)$ onto $\mathcal{M}_*$.
\end{corollary}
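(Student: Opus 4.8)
The plan is to derive every assertion from the corresponding selfadjoint result by complexification. The key inputs are: $\mathcal{M} = \mathcal{M}^{sa} + i\mathcal{M}^{sa}$; the fact that $L_\infty(a)$ and $L_1(a)$ were \emph{defined} as the complexifications of $L_\infty^{sa}(a)$ and $L_1^h(a)$ carrying the norms $p_a(\widehat{a^{1/2}xa^{1/2}}) = \|x\|$ and $\|\varphi\|_a = \|a^{1/2}\varphi a^{1/2}\|$; and the standard fact that the Banach-space adjoint of a surjective linear isometry is again a surjective linear isometry between the dual spaces.

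First I would observe that $U$ is the complex-linear extension of the map $u$ from Corollary \ref{isomorf_u}, equivalently of the isometric isomorphism $\mathcal{M}^{sa} \to (\mathcal{S}_a(\mathcal{M}^{sa}), p_a)$ of Remark \ref{B_a(M^sa)->M^sa}. It is a linear bijection of $\mathcal{M}$ onto $\mathcal{S}_a(\mathcal{M}) = L_\infty(a)$: surjectivity is just the definition of $\mathcal{S}_a(\mathcal{M})$, and injectivity holds because $a$ injective forces $\mathrm{Im}(a^{1/2})$ to be dense in $H$, so the form $\widehat{a^{1/2}xa^{1/2}}(f,g) = \langle x a^{1/2}f, a^{1/2}g\rangle$ determines $x$. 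That $U$ is isometric is exactly the way $p_a$ was extended to $L_\infty(a)$. Hence $U$ is an isometric isomorphism, and so is its adjoint $U^t\colon L_\infty^*(a) \to \mathcal{M}^*$; by definition $U^t(\varphi) = a^{1/2}\varphi a^{1/2}$, in agreement with the notation introduced earlier for $\varphi \in (L_\infty^{sa}(a))^*$, on which $U^t$ restricts to $u^t$.

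It then remains to handle $V = U^t|_{L_1(a)}$. Here I would use the theorem proved just above, stating that $v\colon \varphi \in L_1^h(a) \mapsto a^{1/2}\varphi a^{1/2} \in \mathcal{M}_*^h$ is an isometric isomorphism equal to the restriction of $u^t$ to $L_1^h(a)$. For $\varphi \in L_1^h(a)$ and $x \in \mathcal{M}^{sa}$ one has $V(\varphi)(x) = \varphi(U(x)) = \varphi(u(x)) = u^t(\varphi)(x) = v(\varphi)(x)$, so $V$ extends $v$; being complex-linear with $L_1(a) = L_1^h(a) + iL_1^h(a)$, it maps $L_1(a)$ bijectively onto $\mathcal{M}_*^h + i\mathcal{M}_*^h = \mathcal{M}_*$, and it is isometric since $\|\varphi\|_a = \|a^{1/2}\varphi a^{1/2}\| = \|V(\varphi)\|$ by the definition of the norm on $L_1(a)$.

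I do not expect a genuine obstacle — this is why the paper calls the corollary ``evident''. The only thing requiring attention is the bookkeeping: that the complex norms on $L_\infty(a)$ and $L_1(a)$ were arranged precisely so that $U$ and $V$ are isometries, and that restricting $U^t$ to the selfadjoint dual and to $L_1^h(a)$ recovers $u^t$ and $v$, so that all the identifications ($L_\infty(a) = \mathcal{S}_a(\mathcal{M})$, $L_1(a) \subset L_\infty^*(a)$, and the extensions $u \rightsquigarrow U$, $u^t \rightsquigarrow U^t$, $v \rightsquigarrow V$) are mutually consistent.
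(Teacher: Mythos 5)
Your proposal is correct and follows essentially the paper's own route: the paper states the corollary without proof as "evident" from Theorem \ref{linf<->sql}, Corollary \ref{isomorf_u}, the theorem on $v\colon L_1^h(a)\to\mathcal{M}_*^h$, and the way the norms on $L_\infty(a)$ and $L_1(a)$ were defined by complexification, which is exactly the bookkeeping you spell out. No discrepancy to report.
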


The latter Corollary is an analog of the result of \cite{Tru78}.

\section{Alternate Definition of the Norm}

We have defined $\|\varphi\|_a$ as $\inf\{\varphi_1(a)+\varphi_2(a)|
\varphi=\varphi_1-\varphi_2,\varphi_1,\varphi_2\in\mathfrak{D}_a^+\}$. Also, if operator $a$ is bounded, then 
the equality $\|\varphi\|_a=\|a^\frac{1}{2}\varphi a^\frac{1}{2}\|$ holds by \cite[Theorem 2]{SkvTik98}. Thus,
if an operator $a$ is bounded, then we are able to change the definintion of $\|\cdot\|_a$ to that $\|\cdot\|_a$ is the mapping
$\varphi \in \mathfrak{D}_a \mapsto \|a^\frac{1}{2}\varphi a^\frac{1}{2}\| \in \mathbb{R}$ maintaining all the properties described
in the previous section. Also, it is evident, that if operator $a$ is bounded and $\mathcal{M}$ is commutative,
then the equality $\|a^\frac{1}{2}\varphi a^\frac{1}{2}\|=|\varphi|(a)$ holds for all $\varphi\in\mathfrak{D}_a$. Thus,
in case of commutative $\mathcal{M}$ we are able to define $\|\cdot\|_a$ even simplier, 
as the mapping $\varphi\in\mathfrak{D}_a \mapsto |\varphi|(a)\in \mathbb{R}$, maintaining all the properties of $\|\cdot\|_a$.

In this section we demonstrate that in general case the equality $\|\varphi\|_a=\|a^\frac{1}{2}\varphi a^\frac{1}{2}\|$ also holds
for all $\varphi\in \mathfrak{D}_a^h$, therefore $\|\cdot\|_a$ may be alternatively defined as the mapping
$\varphi \in \mathfrak{D}_a \to \|a^\frac{1}{2}\varphi a^\frac{1}{2}\| \in \mathbb{R}$. Also, we show that the
mapping $\varphi\in \mathfrak{D}_a^h \to |\varphi|(a)\in \mathbb{R}$ is a seminorm if and only if 
operator $a$ is affiliated with the center $\mathcal{C}(\mathcal{M})$ of $\mathcal{M}$. Moreover, we show that 
$a\eta \mathcal{C}(\mathcal{M})$ if and only if $\|a^\frac{1}{2}\varphi a^\frac{1}{2}\|=|\varphi|(a)$ for all $\varphi\in \mathfrak{D}_a$.
At last, we consider case of a semifinite $\mathcal{M}$ and $\mathcal{M}=B(H)$ as an example.

Let $a=\int\limits_0^{+\infty} \lambda d e(\lambda)$ be the spectral decomposition of $a$. The equality
$m_a(\varphi)=\int\limits_0^{+\infty} \lambda d \varphi(e(\lambda))$ ($\varphi\in \mathcal{M}_*^+$) defines the additive positively homogeneous
mapping $m_a: \mathcal{M}_*^+ \mapsto [0, +\infty]$, which is lower semicontinuous on $\mathcal{M}_*^+$. The mapping $m_a$ is the element
of extended positive part $\widehat{\mathcal{M}}^+$ of $\mathcal{M}$.\cite{Haa75} For $p\in \mathcal{M}^\mathrm{pr}$ the equality $(pm_a p)(\varphi)=
m_a(p\varphi p)$ ($\varphi \in \mathcal{M}^+_*)$ defines the element $pm_ap$ in $\widehat{\mathcal{M}}^+$, where $(p\varphi p)(x)=
\varphi(pxp) (x\in \mathcal{M}))$.\cite{Haa75}

\begin{lemma}\label{center_affilation_lemma}
Let $a=\int\limits_0^{+\infty} \lambda d e(\lambda)$ be the spectral decomposition of an operator $a$ 
and for any natural $n$ the restriction $a_{e(n)}$ of $a e(n)$ to $e(n) H$ belongs 
to $\mathcal{C}(\mathcal{M}_{e(n)})$, then $a \eta \mathcal{C}(\mathcal{M})$.
\end{lemma}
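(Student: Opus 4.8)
The plan is to show that each spectral projection $e(\lambda)$ of $a$ lies in the center $\mathcal{C}(\mathcal{M})$, which immediately gives $a\eta\mathcal{C}(\mathcal{M})$ by the Spectral Theorem. Since $a$ is affiliated with $\mathcal{M}$, we already know $e(\lambda)\in\mathcal{M}$, so it suffices to prove that $e(\lambda)$ commutes with every unitary $u\in\mathcal{M}$, equivalently $u e(\lambda) u^* = e(\lambda)$ for all $\lambda\ge 0$. First I would record the elementary observation that for $m\le n$ the spectral projection $e(m)$ equals a spectral projection of the bounded operator $a_{e(n)}$ (viewed inside $\mathcal{M}_{e(n)}$), namely $e(m)$ is the spectral projection of $a_{e(n)}$ corresponding to $[0,m]$; since by hypothesis $a_{e(n)}\in\mathcal{C}(\mathcal{M}_{e(n)})$, all its spectral projections lie in $\mathcal{C}(\mathcal{M}_{e(n)})$, hence $e(m)$ commutes with every element of $\mathcal{M}_{e(n)} = e(n)\mathcal{M} e(n)|_{e(n)H}$.

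The key step is to upgrade this "relative centrality inside $\mathcal{M}_{e(n)}$" to genuine centrality in $\mathcal{M}$. Fix $\lambda\ge 0$ and a unitary $u\in\mathcal{M}$. The obstruction is that $u$ need not preserve $e(n)H$, so one cannot directly restrict $u$ to $\mathcal{M}_{e(n)}$. I would circumvent this by using that the net $e(n)\nearrow\mathbf{1}$ strongly as $n\to\infty$, together with the fact that for $n\ge\lambda$ we have $e(\lambda)\le e(n)$ and $e(\lambda)$ is central in $\mathcal{M}_{e(n)}$. The cleanest route: for $n\ge\lambda$ and any $x\in\mathcal{M}$, the element $e(n)xe(n)$ corresponds to an element of $\mathcal{M}_{e(n)}$, so $e(\lambda)\,e(n)xe(n) = e(n)xe(n)\,e(\lambda)$, and since $e(\lambda)e(n)=e(n)e(\lambda)=e(\lambda)$, this reads $e(\lambda)xe(n) = e(n)xe(n)e(\lambda) = e(n)x e(\lambda)$. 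Letting $n\to\infty$ and using strong continuity of multiplication on bounded sets, $e(n)\to\mathbf{1}$ strongly gives $e(\lambda)x = x e(\lambda)$ for all $x\in\mathcal{M}$; hence $e(\lambda)\in\mathcal{C}(\mathcal{M})$.

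Finally, since every spectral projection $e(\lambda)$ of $a$ lies in $\mathcal{C}(\mathcal{M})$ and $\mathcal{C}(\mathcal{M})$ is a von Neumann subalgebra, the operator $a=\int_0^{+\infty}\lambda\,de(\lambda)$ is affiliated with $\mathcal{C}(\mathcal{M})$: indeed an unbounded selfadjoint operator is affiliated with a von Neumann algebra precisely when all its spectral projections belong to that algebra. I expect the only delicate point to be the limit argument in the second step — specifically, justifying that one may pass from the relation $e(\lambda)xe(n)=e(n)xe(\lambda)$ to $e(\lambda)x = xe(\lambda)$; this is handled by noting $\|e(n)xe(\lambda) - xe(\lambda)\| = \|(e(n)-\mathbf{1})xe(\lambda)\|\to 0$ is false in norm but the corresponding strong-operator convergence $(e(n)-\mathbf{1})\xi\to 0$ for each $\xi$ suffices, and likewise $e(\lambda)xe(n)\xi\to e(\lambda)x\xi$ strongly, so the identity persists in the strong operator topology and hence as an operator identity.
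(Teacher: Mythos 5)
Your proof is correct and follows essentially the same route as the paper: reduce to $\mathcal{M}_{e(n)}$, use that functions of the central element $a_{e(n)}$ are central there to commute with the compressions $e(n)xe(n)$, and pass to the strong limit along $e(n)\nearrow\mathbf{1}$. The only cosmetic difference is that you centralize the spectral projections $e(\lambda)$ and invoke the projection criterion for affiliation, whereas the paper centralizes the truncations $ae(n)$ directly; both are fine.
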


\begin{proof}
 For a natural $m\geq n$ the restiction $a_{e(n)}$ of $a e(n)$ to $e(m)H$ belongs to $\mathcal{C}(\mathcal{M}_{e(m)})$ as the function
 of the operator $a_m$. Therefore, for any $x\in \mathcal{M}$ the equality
 $a e(n) e(m) x e(m)= e(m) x e(m) a e(n) $ holds. Passing to the limit $m \to +\infty$ we conclude that $a e(n)$ belongs 
 to $\mathcal{C}(\mathcal{M})$, hence $a \eta \mathcal{C}(\mathcal{M})$.
 
\end{proof}

\begin{theorem}\label{center_affilation_theorem}
Let $a=\int\limits_0^{+\infty} \lambda d e(\lambda)$ be the spectral decomposition of an operator $a$,
then the following statements are equivalent:

(i) $a \eta \mathcal{C}(\mathcal{M})$;

(ii) $\forall p\in \mathcal{M}^\mathrm{pr}$ the inequality $pm_a p \leq m_a$ holds;

(iii) $\forall \varphi \in \mathcal{M}_*^h$ and any decomposition $\varphi=\varphi_1-\varphi_2$ ($\varphi_1,\ \varphi_2\in \mathcal{M}_*^+$)
the inequality $m_a(\varphi^+)\leq m_a(\varphi_1)$ holds;

(iv) the mapping $\varphi \mapsto m_a(\varphi^+)$ ($\varphi\in \mathcal{M}_*^h$) is monotone,

i.e., $\varphi, \psi\in \mathcal{M}_*^h$, $\varphi\leq \psi$ imply $m_a(\varphi^+)\leq m_a(\psi^+)$;

(v) the mapping $\varphi \mapsto m_a(\varphi^+)$ ($\varphi\in \mathcal{M}_*^h$) is subadditive,

i.e., $m_a((\varphi+\psi)^+)\leq m_a(\varphi^+)+m_a(\psi^+)$ for all $\varphi, \psi\in \mathcal{M}_*^h$;

(vi) the mapping $\varphi \mapsto m_a(\varphi^+)$ ($\varphi\in \mathcal{M}_*^h$) is convex, i.e.,

$m_a((\lambda\varphi+(1-\lambda)\psi)^+)\leq \lambda m_a(\varphi^+)+(1-\lambda) m_a(\psi^+)$ for all $\varphi, \psi \in \mathcal{M}_*^h$,
$\lambda\in [0,1]$;

(vii) the mapping $\varphi \mapsto m_a(|\varphi|)$ ($\varphi\in \mathcal{M}_*^h$) is subadditive,

i.e., $m_a(|\varphi+\psi|)\leq m_a(|\varphi|)+m_a(|\psi|)$ for all $\varphi, \psi\in \mathcal{M}_*^h$

(viii) the mapping $\varphi \mapsto m_a(|\varphi|)$ ($\varphi\in \mathcal{M}_*^h$) is convex, i.e.,

$m_a(|\lambda\varphi+(1-\lambda)\psi|)\leq \lambda m_a(|\varphi|)+(1-\lambda) m_a(|\psi|)$ for all $\varphi, \psi \in \mathcal{M}_*^h$,
$\lambda\in [0,1]$.
\end{theorem}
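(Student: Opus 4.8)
The plan is to show that each of (ii)--(viii) is equivalent to (i), by establishing a forward chain (i)$\Rightarrow$(ii)$\Rightarrow$(iii) from which (iii) propagates to all the remaining conditions, and by one construction that closes the loop back to (i). Throughout I use that $m_a(\varphi)=\varphi(a)=\lim_{\lambda\to+\infty}\varphi(a_\lambda)$ for $\varphi\in\mathcal M_*^+$, so in particular $m_a$ is monotone on $\mathcal M_*^+$. For (i)$\Rightarrow$(ii): if $a$ is affiliated with $\mathcal C(\mathcal M)$ then each $a_\lambda$ is central, hence $pa_\lambda p=a_\lambda p\leq a_\lambda$ for every $p\in\mathcal M^{\mathrm{pr}}$, and letting $\lambda\to+\infty$ gives $(pm_ap)(\varphi)=m_a(p\varphi p)=\lim_\lambda\varphi(pa_\lambda p)\leq\lim_\lambda\varphi(a_\lambda)=m_a(\varphi)$. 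For (ii)$\Rightarrow$(iii): choose the Jordan projection $p\in\mathcal M^{\mathrm{pr}}$ with $\varphi^+=p\varphi p$; then $p\varphi p=p\varphi_1p-p\varphi_2p\leq p\varphi_1p$, so by monotonicity of $m_a$ and by (ii), $m_a(\varphi^+)\leq m_a(p\varphi_1p)=(pm_ap)(\varphi_1)\leq m_a(\varphi_1)$.

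The relations among (iii)--(viii) are short manipulations of the Jordan decomposition. (iv)$\Rightarrow$(iii) is immediate: in $\varphi=\varphi_1-\varphi_2$ one has $\varphi\leq\varphi_1=\varphi_1^+$. Conversely, given $\varphi\leq\psi$, apply (iii) to $\varphi=\psi^+-\bigl(\psi^-+(\psi-\varphi)\bigr)$ to get (iv). For (iii)$\Rightarrow$(v), apply (iii) to $\varphi+\psi=(\varphi^++\psi^+)-(\varphi^-+\psi^-)$; for (v)$\Rightarrow$(iii), apply subadditivity to $\varphi_1$ and $-\varphi_2$, noting $(-\varphi_2)^+=0$. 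The equivalences (v)$\Leftrightarrow$(vi) and (vii)$\Leftrightarrow$(viii) hold because $\varphi\mapsto m_a(\varphi^+)$ and $\varphi\mapsto m_a(|\varphi|)$ are positively homogeneous and vanish at $0$ (for such maps convexity $\Leftrightarrow$ subadditivity). Finally (v)$\Rightarrow$(vii): from $m_a(|\chi|)=m_a(\chi^+)+m_a\bigl((-\chi)^+\bigr)$ and subadditivity of $\varphi\mapsto m_a(\varphi^+)$ one gets $m_a(|\varphi+\psi|)\leq m_a(\varphi^+)+m_a(\psi^+)+m_a\bigl((-\varphi)^+\bigr)+m_a\bigl((-\psi)^+\bigr)=m_a(|\varphi|)+m_a(|\psi|)$. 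At this point (i)$\Rightarrow$(ii)$\Rightarrow$(iii), (iii) is equivalent to each of (iv), (v), (vi), and (iii)$\Rightarrow$(v)$\Rightarrow$(vii)$\Leftrightarrow$(viii).

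It remains to prove that if $a$ is not affiliated with $\mathcal C(\mathcal M)$ then (iii) and (vii) fail; this is the main point. Non-affiliation means some spectral projection $e(\lambda_0)$ is not central, so $e(\lambda_0)\mathcal M(\mathbf 1-e(\lambda_0))\neq\{\mathit 0\}$; since $e(\mu)-e(\lambda_0)\to\mathbf 1-e(\lambda_0)$ strongly as $\mu\to+\infty$, there is $\mu>\lambda_0$ with $e(\lambda_0)\mathcal M(e(\mu)-e(\lambda_0))\neq\{\mathit 0\}$. The polar decomposition of a nonzero element of this space gives a partial isometry $w\in\mathcal M$ with $w^*w=:p_1\leq e(\mu)-e(\lambda_0)$ and $ww^*=:p_2\leq e(\lambda_0)$, both nonzero. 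Fix a unit vector $f_1\in p_1H$ and put $f_2:=wf_1\in p_2H$, again a unit vector. Since $e(\lambda_0)H$ and $(e(\mu)-e(\lambda_0))H$ are orthogonal reducing subspaces for $a$, $\langle af_1,f_2\rangle=0$, while $\alpha:=\langle af_2,f_2\rangle\leq\lambda_0<\beta:=\langle af_1,f_1\rangle$ (the inequality is strict because the spectral measure of $f_1$ is carried by $(\lambda_0,\mu]$). A direct computation with the matrix-unit relations among $p_1,p_2,w,w^*$ shows $q:=\tfrac12(p_1+p_2+w+w^*)$ is a projection in $\mathcal M$ with $qf_1=qf_2=\tfrac12(f_1+f_2)$, so $q$ fixes $v:=\tfrac1{\sqrt 2}(f_1+f_2)$ and annihilates $u:=\tfrac1{\sqrt 2}(f_1-f_2)$; hence the positive functionals $\langle \cdot v,v\rangle$ and $\langle \cdot u,u\rangle$ have orthogonal supports in $\mathcal M$. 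Consequently $\varphi:=\langle \cdot v,v\rangle-\langle \cdot u,u\rangle\in\mathcal M_*^h$ has Jordan decomposition $\varphi^+=\langle \cdot v,v\rangle$, so $m_a(\varphi^+)=\langle av,v\rangle=\tfrac12(\alpha+\beta)$. On the other hand, for $c_1,c_2>0$ with $2c_1c_2=1$ one checks $\langle \cdot (c_1f_1+c_2f_2),c_1f_1+c_2f_2\rangle-\langle \cdot (c_1f_1-c_2f_2),c_1f_1-c_2f_2\rangle=\varphi$, a decomposition $\varphi=\varphi_1-\varphi_2$ in $\mathcal M_*^+$ whose positive summand satisfies $m_a(\varphi_1)=c_1^2\beta+c_2^2\alpha$, which can be made strictly smaller than $\tfrac12(\alpha+\beta)$ (its infimum over such $c_1,c_2$ being $\sqrt{\alpha\beta}<\tfrac12(\alpha+\beta)$ since $\alpha<\beta$); so (iii) fails. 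With the same vectors, putting $\varphi:=\langle \cdot (c_1f_1+c_2f_2),c_1f_1+c_2f_2\rangle$ and $\psi:=-\langle \cdot (c_1f_1-c_2f_2),c_1f_1-c_2f_2\rangle$ gives $\varphi+\psi=\langle \cdot v,v\rangle-\langle \cdot u,u\rangle$, so $m_a(|\varphi+\psi|)=\langle av,v\rangle+\langle au,u\rangle=\alpha+\beta$, whereas $m_a(|\varphi|)+m_a(|\psi|)=2(c_1^2\beta+c_2^2\alpha)$, whose infimum $2\sqrt{\alpha\beta}$ is $<\alpha+\beta$; so (vii) fails as well.

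Assembling: (i)$\Leftrightarrow$(ii)$\Leftrightarrow$(iii)$\Leftrightarrow$(iv)$\Leftrightarrow$(v)$\Leftrightarrow$(vi), and (v)$\Rightarrow$(vii)$\Leftrightarrow$(viii) together with (vii)$\Rightarrow$(i) from the construction, so all eight statements are equivalent. The hard part is the construction of the previous paragraph: passing from a genuinely noncentral $a$ to the two-dimensional \emph{off-diagonal} picture, and in particular arranging that the positive functionals one produces are orthogonal \emph{as elements of} $\mathcal M_*$ rather than merely built from orthogonal vectors --- this is precisely the role of the separating projection $q$. The rest is elementary, the only points needing care being the identity $m_a(\varphi)=\varphi(a)$ on $\mathcal M_*^+$ and the strong approximation of $\mathbf 1-e(\lambda_0)$ by the spectral projections $e(\mu)-e(\lambda_0)$.
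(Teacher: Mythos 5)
Your proposal is correct, but it follows a genuinely different route from the paper. The paper treats Theorem~\ref{center_affilation_theorem} as an unbounded extension of the bounded-operator result of \cite{NovTik15}: the implication $(i)\Rightarrow(ii)$ is obtained by applying \cite[Theorem 1]{NovTik15} to the bounded central operators $ae(n)$, the whole chain $(ii)\Rightarrow(iii)\Rightarrow(iv)\Rightarrow(v)\Rightarrow(vii)$, $(v)\Leftrightarrow(vi)$, $(vii)\Leftrightarrow(viii)$ is simply cited as ``literally repeating'' \cite{NovTik15}, and the crucial converse $(vii)\Rightarrow(i)$ is proved by a reduction: condition $(vii)$ is transferred to the corner algebras $\mathcal{M}_{e(n)}$, the bounded-case theorem of \cite{NovTik15} gives $a_{e(n)}\in\mathcal{C}(\mathcal{M}_{e(n)})$, and Lemma~\ref{center_affilation_lemma} then yields $a\,\eta\,\mathcal{C}(\mathcal{M})$. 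You instead give a self-contained argument: you write out the elementary Jordan-decomposition manipulations for $(ii)\Rightarrow(iii)\Leftrightarrow(iv)\Leftrightarrow(v)\Leftrightarrow(vi)$, $(v)\Rightarrow(vii)\Leftrightarrow(viii)$ explicitly (these are exactly the arguments the paper outsources), and for the converse you avoid the reduction lemma entirely by producing, from a non-central spectral projection $e(\lambda_0)$, a partial isometry and a $2\times 2$ system of matrix units, and then explicit vector functionals whose two decompositions violate $(iii)$ and $(vii)$ whenever $\alpha<\beta$ (the projection $q=\tfrac12(p_1+p_2+w+w^*)$ correctly guarantees that $\langle\cdot\,v,v\rangle-\langle\cdot\,u,u\rangle$ is a genuine Jordan decomposition, and the AM--GM comparison $\sqrt{\alpha\beta}<\tfrac12(\alpha+\beta)$ does the rest). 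In effect you re-prove the relevant core of \cite[Theorem 1]{NovTik15} directly in the unbounded setting. What each approach buys: the paper's reduction is shorter and reuses the published bounded case, at the price of depending on Lemma~\ref{center_affilation_lemma} and the external reference; yours is longer but makes the mechanism of failure of centrality completely explicit and needs no reduction to corners. Your logical assembly is also complete: $(vii)\Rightarrow(i)\Rightarrow(ii)\Rightarrow(iii)\Rightarrow(v)\Rightarrow(vii)$ closes the cycle, with $(iv),(vi),(viii)$ attached by the stated equivalences (your separate verification that $(iii)$ fails under $\neg(i)$ is redundant but harmless). Only stylistic caveats: you should note explicitly that affiliation of a positive selfadjoint operator with the von Neumann algebra $\mathcal{C}(\mathcal{M})$ is equivalent to all spectral projections $e(\lambda)$ lying in $\mathcal{C}(\mathcal{M})$, and that $m_a(\langle\cdot\,v,v\rangle)=\langle av,v\rangle$ for $v\in e(\mu)H\subset D(a)$; both are standard and you use them correctly.
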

\begin{proof}
  First prove the implication $(i)\Rightarrow (ii)$. Since $a=\int\limits_0^{+\infty} \lambda d e(\lambda)$ is affiliated with
  $\mathcal{M}$, it follows that for any natural $n$ the bounded operator $a e(n)$ belongs to $\mathcal{C}(\mathcal{M})$,
  hence $p a e(n) p \leq  a e(n)$ for all $p\in \mathcal{M}^\mathrm{pr}$ by \cite[Theorem 1]{NovTik15}.
  For an arbitary $\varphi \in \mathcal{M}_*^+$ and $p\in \mathcal{M}^\mathrm{pr}$ we have:
  $$ m_a(\varphi) =\lim\limits_{n\to \infty} \varphi(a e(n))\geq \lim\limits_{n\to \infty} \varphi(p a e(n) p)=$$
  $$=\lim\limits_{n\to \infty} (p\varphi p)(a e(n))=m_a(p\varphi p)=(pm_a p)(\varphi)$$
  (i.e. $p m_a p \leq m_a$).
  
  The proof of the implications $(ii) \Rightarrow (iii) \Rightarrow (iv) \Rightarrow (v) \Rightarrow (vii)$, $(v) \Leftrightarrow (vi)$,
 $(vii) \Leftrightarrow (viii)$ literally repeats the proof of respective implications in \cite{NovTik15}, therefore it is sufficient
 to prove the impication $(vii) \Rightarrow (i)$. According to Lemma \ref{center_affilation_lemma}, it is
 sufficient to prove, that for any natural $n$ the operator $a_{e(n)}$ belongs to 
 $\mathcal{C}(\mathcal{M}_{e(n)})$.
 
 Let $(vii)$ hold and $\widetilde{\varphi}, \widetilde{\psi} \in (\mathcal{M}_{e(n)})_*^h$. Construct $\varphi, \psi \in \mathcal{M}_*^h$,
 such that $\varphi=e(n)\varphi e(n)$, $\varphi_{e(n)}=\widetilde{\varphi}$, $\psi=e(n)\psi e(n)$, $\psi_{e(n)}=\widetilde{\psi}$. Then 
 $$ |\widetilde{\varphi}+\widetilde{\psi}|(a_{e(n)})=|\varphi+\psi|(a e(n))=$$ $$=m_a(|\varphi+\psi|)\leq m_a(|\varphi|)+m_a(|\psi|)=
 |\widetilde{\varphi}|(a_{e(n)})+|\widetilde{\psi}|(a_{e(n)}).$$
 By \cite[Theorem 1]{NovTik15}, $a_{e(n)}\in \mathcal{C}(\mathcal{M}_{e(n)})$.
 
\end{proof}

 \begin{remark}
According to \cite[Corollary 1]{NovTik15} if $a\in \mathcal{C}(\mathcal{M})$, then 
$\|a^\frac{1}{2}\varphi a^\frac{1}{2}\|=|\varphi|(a)$ for all $\varphi\in \mathcal{M}_*$. 
We generalize the latter equality to the case of unbounded operator $a$ in Corollary \ref{afa=|f|(a)}.
 \end{remark}
 
In Theorem \ref{definition_afa}  we have defined $a^\frac{1}{2}\varphi a^\frac{1}{2}$ for $\varphi\in\mathfrak{D}_a^h$ as a
normal functional on $\mathcal{M}$ by the equality $a^\frac{1}{2}\varphi a^\frac{1}{2}\equiv
\lim\limits_{\lambda \to +\infty} \varphi(a^\frac{1}{2}_\lambda x a^\frac{1}{2}_\lambda)$ with $a_\lambda \nearrow a$.
Further we define $L_\infty(a)$ for an arbitary $a$ and define the meaning of $a^\frac{1}{2}\varphi a^\frac{1}{2}$ for 
$\varphi\in L_\infty^*(a)$.

\begin{definition}
For a non-injective operator $a$ we define $L_\infty^{sa}(a)$ as the quotient space $\mathcal{S}_a(\mathcal{M}^{sa})/
\{\widehat{x}\in \mathcal{S}_a(\mathcal{M}^{sa})| p_a(\widehat{x})=0\}$.
With $[\widehat{x}]$ we denote the equivalence class of the sesquilinear form $\widehat{x}$. The norm of this
quotient space is denoted by $\|[\widehat{x}]\|_a:=p_a(\widehat{x})$.
\end{definition}

Let $p$ be the projection onto $\ker a$, then for $q=\mathbf{1}-p$ the equalities $aq=qa=a$ hold.

\begin{theorem}\label{la=lap}
The mapping $[\widehat{a^\frac{1}{2}xa^\frac{1}{2}}] \mapsto \widehat{a_q^\frac{1}{2}x_q a_q^\frac{1}{2}}$ is an
isometrical isomorphism of $L_\infty^{sa}(a)$ onto  $\mathcal{S}_{a_q}(\mathcal{M}^{sa}_q)$
\end{theorem}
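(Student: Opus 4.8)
The plan is to reduce the whole statement to a single identity comparing the sesquilinear forms on $H$ and on $qH$, and then to read off well-definedness, linearity, the isometry property and surjectivity from it.

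First I would record the structural facts about $p$ and $q$. Since $a$ is positive selfadjoint and affiliated with $\mathcal{M}$, the projection $p$ onto $\ker a=\ker a^\frac{1}{2}$ is the spectral projection $e(\{0\})\in\mathcal{M}^\mathrm{pr}$ and $q=\mathbf{1}-p$ is the range projection of $a$; both commute with $a$ and with $a^\frac{1}{2}$, the relations $aq=qa=a$ being already recorded in the text. Consequently $a_q=qaq|_{qH}$ is an \emph{injective} positive selfadjoint operator affiliated with $\mathcal{M}_q$, one has $D(a^\frac{1}{2})=pH\oplus\big(qH\cap D(a^\frac{1}{2})\big)$ with $qH\cap D(a^\frac{1}{2})=D(a_q^\frac{1}{2})$, moreover $a^\frac{1}{2}|_{pH}=0$ and $a_q^\frac{1}{2}$ is the restriction of $a^\frac{1}{2}$ to $D(a_q^\frac{1}{2})$. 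This domain bookkeeping is the only place where the possible unboundedness of $a$ really intervenes, and it is the one point that must be checked with some care; everything downstream of it is formal.

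With this in hand, for $x\in\mathcal{M}$ and $f,g\in D(a^\frac{1}{2})$ I would compute, using $a^\frac{1}{2}f=a^\frac{1}{2}qf\in qH$ and $a^\frac{1}{2}g=a^\frac{1}{2}qg\in qH$,
$$\widehat{a^\frac{1}{2}xa^\frac{1}{2}}(f,g)=\langle xa^\frac{1}{2}qf,\,a^\frac{1}{2}qg\rangle=\langle qxq\,a_q^\frac{1}{2}(qf),\,a_q^\frac{1}{2}(qg)\rangle=\widehat{a_q^\frac{1}{2}x_q a_q^\frac{1}{2}}(qf,qg).$$
Thus $\widehat{a^\frac{1}{2}xa^\frac{1}{2}}$ is completely determined by, and completely determines, $\widehat{a_q^\frac{1}{2}x_q a_q^\frac{1}{2}}$, noting that $qf$ ranges over all of $D(a_q^\frac{1}{2})$ as $f$ ranges over $D(a^\frac{1}{2})$. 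Taking $g=f$ and comparing with $\widehat{a^\frac{1}{2}\mathbf{1}a^\frac{1}{2}}(f,f)=\|a^\frac{1}{2}qf\|^2=\widehat{a_q^\frac{1}{2}\mathbf{1}_q a_q^\frac{1}{2}}(qf,qf)$, the two families of scalar inequalities that define $p_a(\widehat{a^\frac{1}{2}xa^\frac{1}{2}})$ and $p_{a_q}(\widehat{a_q^\frac{1}{2}x_q a_q^\frac{1}{2}})$ become literally the same, so for every $x\in\mathcal{M}^{sa}$
$$p_a\big(\widehat{a^\frac{1}{2}xa^\frac{1}{2}}\big)=p_{a_q}\big(\widehat{a_q^\frac{1}{2}x_q a_q^\frac{1}{2}}\big),$$
and since $a_q$ is injective this common value equals $\|x_q\|$ by Remark \ref{B_a(M^sa)->M^sa}.

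Finally I would assemble the claim. Linearity of $[\widehat{a^\frac{1}{2}xa^\frac{1}{2}}]\mapsto\widehat{a_q^\frac{1}{2}x_q a_q^\frac{1}{2}}$ is immediate from linearity of $x\mapsto x_q$ and of $x\mapsto\widehat{a^\frac{1}{2}xa^\frac{1}{2}}$. Because $p_{a_q}$ is a norm on $\mathcal{S}_{a_q}(\mathcal{M}^{sa}_q)$ (Theorem \ref{faithfullness}, or Remark \ref{B_a(M^sa)->M^sa}), the displayed seminorm identity shows that $\widehat{a^\frac{1}{2}xa^\frac{1}{2}}$ lies in the null space $\{\widehat{y}\in\mathcal{S}_a(\mathcal{M}^{sa})\mid p_a(\widehat{y})=0\}$ if and only if $\widehat{a_q^\frac{1}{2}x_q a_q^\frac{1}{2}}=0$; hence the map descends to a well-defined injective linear map on the quotient $L_\infty^{sa}(a)$, and the identity becomes $\|[\widehat{a^\frac{1}{2}xa^\frac{1}{2}}]\|_a=p_{a_q}(\widehat{a_q^\frac{1}{2}x_q a_q^\frac{1}{2}})$, i.e. it is isometric. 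For surjectivity, given $z\in\mathcal{M}^{sa}_q$ I choose $w\in\mathcal{M}^{sa}$ with $w_q=z$ (possible since $\mathcal{M}_q$ is the reduction of $\mathcal{M}$ by $q$) and set $x=qwq\in\mathcal{M}^{sa}$, so that $x_q=z$ and $\widehat{a_q^\frac{1}{2}x_q a_q^\frac{1}{2}}=\widehat{a_q^\frac{1}{2}za_q^\frac{1}{2}}$ is the image of $[\widehat{a^\frac{1}{2}xa^\frac{1}{2}}]$; as $z$ runs over $\mathcal{M}^{sa}_q$ these forms exhaust $\mathcal{S}_{a_q}(\mathcal{M}^{sa}_q)$. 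This exhibits the map as an isometrical isomorphism of $L_\infty^{sa}(a)$ onto $\mathcal{S}_{a_q}(\mathcal{M}^{sa}_q)$. The only genuine obstacle, as noted, is making the first paragraph's domain decomposition precise for unbounded $a$; I expect this to be a short application of the spectral calculus and the fact that $q$ commutes with $a$.
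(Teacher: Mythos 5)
Your proof is correct and follows essentially the same route as the paper: the pointwise identity $\widehat{a^\frac{1}{2}xa^\frac{1}{2}}(f,g)=\widehat{a_q^\frac{1}{2}x_qa_q^\frac{1}{2}}(qf,qg)$, well-definedness from vanishing of the seminorm forcing $x_q=y_q$, and the isometry $p_a(\widehat{a^\frac{1}{2}xa^\frac{1}{2}})=\|x_q\|=p_{a_q}(\widehat{a_q^\frac{1}{2}x_qa_q^\frac{1}{2}})$. Your only deviations are cosmetic: you invoke Remark \ref{B_a(M^sa)->M^sa} for the injective operator $a_q$ where the paper argues directly via density of $\mathrm{Im}(a^\frac{1}{2})$ in $qH$, and you spell out the surjectivity step that the paper leaves implicit.
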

\begin{proof}
For all $f, g\in D(a^\frac{1}{2})$ the chain of equalities 
$\widehat{a^\frac{1}{2}x a^\frac{1}{2}}(f,g)\equiv\langle x a^\frac{1}{2} f, a^\frac{1}{2} g \rangle=
\langle x qa^\frac{1}{2}q f, qa^\frac{1}{2}q g \rangle=\widehat{a^\frac{1}{2}_q x_q a^\frac{1}{2}_q}(qf,qg)$ holds.
Also, note that $\widehat{a^\frac{1}{2}xa^\frac{1}{2}}(f,g)=\widehat{a^\frac{1}{2}xa^\frac{1}{2}}(qf,qg)$ for all $f,g \in D(a^\frac{1}{2})$.

If $p_a(\widehat{a^\frac{1}{2}xa^\frac{1}{2}}-\widehat{a^\frac{1}{2}ya^\frac{1}{2}})=0$, then 
$-\lambda \widehat{a^\frac{1}{2}\mathbf{1}a^\frac{1}{2}}\leq \widehat{a^\frac{1}{2}xa^\frac{1}{2}}-\widehat{a^\frac{1}{2}ya^\frac{1}{2}}\leq \lambda \widehat{a^\frac{1}{2}\mathbf{1}a^\frac{1}{2}}$
for all $\lambda\geq 0$, which implies $-\lambda \|a^\frac{1}{2} f\|^2 \leq \langle (x-y)a^\frac{1}{2} f, a^\frac{1}{2} f\rangle \leq \lambda \|a^\frac{1}{2}f\|^2$
for all $\lambda\geq 0$ and all $f\in D(a^\frac{1}{2})$. The latter implies that $x_q=y_q$, since
$\mathrm{Im}(a^\frac{1}{2})$ is dense in $qH$. Therefore, the mapping 
$[\widehat{a^\frac{1}{2}xa^\frac{1}{2}}] \mapsto \widehat{a_q^\frac{1}{2}x_q a_q^\frac{1}{2}}$ is well-defined.

The mapping is linear, since $[\widehat{a^\frac{1}{2}xa^\frac{1}{2}}]+[\widehat{a^\frac{1}{2}ya^\frac{1}{2}}]=
[\widehat{a^\frac{1}{2}(x+y)a^\frac{1}{2}}]\mapsto \widehat{a_q^\frac{1}{2}(x+y)_q a_q^\frac{1}{2}}=
\widehat{a_q^\frac{1}{2}(x_q+y_q) a_q^\frac{1}{2}}= \widehat{a_q^\frac{1}{2}x_q a_q^\frac{1}{2}}+ \widehat{a_q^\frac{1}{2}y_q a_q^\frac{1}{2}}$
and $\lambda[\widehat{a^\frac{1}{2}xa^\frac{1}{2}}]=[\widehat{a^\frac{1}{2}(\lambda x)a^\frac{1}{2}}]
\mapsto \widehat{a^\frac{1}{2}_q(\lambda x)_q a^\frac{1}{2}_q}=\widehat{a^\frac{1}{2}_q(\lambda x_q) a^\frac{1}{2}_q}=
\lambda \widehat{a^\frac{1}{2}_qx_q a^\frac{1}{2}_q}$.

Also, the chain of equalities $\|[\widehat{a^\frac{1}{2} x a^\frac{1}{2}}]\|_a=
p_a(\widehat{a^\frac{1}{2} x a^\frac{1}{2}})=
\inf \{\lambda |\forall f\in D(a^\frac{1}{2}) -\lambda \|a^\frac{1}{2} f\|^2 \leq \langle x a^\frac{1}{2} f, a^\frac{1}{2} f\rangle
\leq \lambda \|a^\frac{1}{2} f\|^2\}=
\inf\{\lambda | \forall f \in qH -\lambda\|f\|^2 \leq \langle x_q f, f\rangle \leq \lambda \|f\|^2\}
=\|x_q\|=p_{a_q}(\widehat{a^\frac{1}{2}_q x_q a^\frac{1}{2}_q})$ holds, therefore 
the mapping  $[\widehat{a^\frac{1}{2}xa^\frac{1}{2}}] \mapsto \widehat{a_q^\frac{1}{2}x_q a_q^\frac{1}{2}}$
is an isometrical isomorphism.

\end{proof}

We denote the complexification of $L_\infty^{sa}(a)$ by $L_\infty(a)$. Since $\|[\widehat{a^\frac{1}{2}xa^\frac{1}{2}}]\|_a=
\|x_q\|$ for all $[\widehat{a^\frac{1}{2}xa^\frac{1}{2}}]\in L_\infty^{sa}(a)$, it is natural to extend 
the norm $\|\cdot\|_a$ onto $L_\infty(a)$ with the equality $\|[\widehat{a^\frac{1}{2}xa^\frac{1}{2}}]\|_a=\|x_q\|$.

\begin{definition}\label{noninj_a_aqpa_def}
For $\varphi\in L_\infty^*(a)$ let $\widehat{\varphi}$ be the corresponding element in $\mathcal{S}_{a_q}^*(\mathcal{M}_q)$.
We define $a^\frac{1}{2} \varphi a^\frac{1}{2}$ as a bounded linear functional on $\mathcal{M}$ with the equality
$a^\frac{1}{2} \varphi a^\frac{1}{2}(x):=\varphi([\widehat{a^\frac{1}{2} x a^\frac{1}{2}}])(\equiv\widehat{\varphi}(\widehat{a^\frac{1}{2}_qx_qa^\frac{1}{2}_q}))$.
Note that $a^\frac{1}{2} \varphi a^\frac{1}{2}(x)=a^\frac{1}{2}_q\widehat{\varphi} a^\frac{1}{2}_q(x_q)$.
\end{definition}

\begin{theorem}\label{most_wide_extention}
For any $\varphi$ in $L_\infty^*(a)$ the equality $\|\varphi\|_a=\|a^\frac{1}{2} \varphi a^\frac{1}{2}\|$ holds. 
\end{theorem}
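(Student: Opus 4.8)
The plan is to reduce the claim for a general positive selfadjoint $a$ to the injective case already handled in Corollary~\ref{L_inf->M}, using the machinery of Theorem~\ref{la=lap}. Write $p$ for the projection onto $\ker a$ and $q=\mathbf{1}-p$, so that $a=a_q$ acts faithfully on $qH$ and $a_q\eta\mathcal{M}_q$ is injective. Theorem~\ref{la=lap} gives an isometrical isomorphism $L_\infty^{sa}(a)\cong\mathcal{S}_{a_q}(\mathcal{M}_q^{sa})$, and hence (after complexification) an isometrical isomorphism $L_\infty(a)\cong\mathcal{S}_{a_q}(\mathcal{M}_q)=L_\infty(a_q)$; taking adjoints, $L_\infty^*(a)$ is isometrically isomorphic to $L_\infty^*(a_q)$, and under this identification the functional $\varphi\in L_\infty^*(a)$ corresponds to $\widehat\varphi\in\mathcal{S}_{a_q}^*(\mathcal{M}_q)$ as in Definition~\ref{noninj_a_aqpa_def}. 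So $\|\varphi\|_a=\|\widehat\varphi\|_{a_q}$ by construction (the norm on $L_\infty^*(a)$ is the dual norm, transported isometrically).

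Next I would unwind what $a^\frac{1}{2}\varphi a^\frac{1}{2}$ is. By Definition~\ref{noninj_a_aqpa_def}, $a^\frac{1}{2}\varphi a^\frac{1}{2}(x)=a_q^\frac{1}{2}\widehat\varphi a_q^\frac{1}{2}(x_q)$ for all $x\in\mathcal{M}$. Since the map $x\mapsto x_q=qxq|_{qH}$ is a $*$-homomorphism of $\mathcal{M}$ onto $\mathcal{M}_q$ which is surjective and norm-decreasing (indeed a quotient map), and since $a_q^\frac{1}{2}\widehat\varphi a_q^\frac{1}{2}\in\mathcal{M}_q^*$ by Corollary~\ref{L_inf->M} applied to the injective operator $a_q$, we get
\[
\|a^\frac{1}{2}\varphi a^\frac{1}{2}\|_{\mathcal{M}^*}=\sup_{\|x\|\le 1}\bigl|a_q^\frac{1}{2}\widehat\varphi a_q^\frac{1}{2}(x_q)\bigr|=\sup_{\|y\|\le 1,\,y\in\mathcal{M}_q}\bigl|a_q^\frac{1}{2}\widehat\varphi a_q^\frac{1}{2}(y)\bigr|=\|a_q^\frac{1}{2}\widehat\varphi a_q^\frac{1}{2}\|_{\mathcal{M}_q^*},
\]
where the middle equality uses surjectivity of $x\mapsto x_q$ onto the unit ball of $\mathcal{M}_q$ (every contraction in $\mathcal{M}_q$ lifts to a contraction in $\mathcal{M}$, e.g. via $y\mapsto qyq$ viewed back in $\mathcal{M}$, or simply because $\mathcal{M}_q$ is a corner). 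Then Corollary~\ref{L_inf->M} for $a_q$ says $U^t$ is an isometry, i.e. $\|a_q^\frac{1}{2}\widehat\varphi a_q^\frac{1}{2}\|_{\mathcal{M}_q^*}=\|\widehat\varphi\|_{a_q}$. Chaining the equalities: $\|a^\frac{1}{2}\varphi a^\frac{1}{2}\|=\|a_q^\frac{1}{2}\widehat\varphi a_q^\frac{1}{2}\|=\|\widehat\varphi\|_{a_q}=\|\varphi\|_a$.

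The main obstacle is bookkeeping rather than depth: one must be careful that the isometric identification of $L_\infty(a)$ with $L_\infty(a_q)$ is exactly the one induced by Theorem~\ref{la=lap} and that its adjoint identifies the dual norms, and that the norm $\|\cdot\|_a$ on $L_\infty^*(a)$ is indeed defined as that dual norm (which is the content of the preceding definitions). I would also double-check the lifting step — that the supremum over $x_q$ with $\|x\|\le1$ already exhausts the unit ball of $\mathcal{M}_q$ — which is immediate since for $y\in\mathcal{M}_q$ with $\|y\|\le1$, the operator $qy'q$ (where $y'\in\mathcal{M}$ extends $y$ by $0$ on $pH$) is a contraction in $\mathcal{M}$ with $(qy'q)_q=y$. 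With these routine verifications in place the theorem follows.
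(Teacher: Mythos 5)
Your proposal is correct and follows essentially the same route as the paper: reduce to the injective operator $a_q$ on $qH$ via Theorem~\ref{la=lap}, identify $L_\infty^*(a)$ with $\mathcal{M}_q^*$, and apply the isometry $U^t$ of Corollary~\ref{L_inf->M}, with your lifting argument merely making explicit the step $\|a^\frac{1}{2}\varphi a^\frac{1}{2}\|_{\mathcal{M}^*}=\|a_q^\frac{1}{2}\widehat{\varphi}a_q^\frac{1}{2}\|_{\mathcal{M}_q^*}$ that the paper leaves implicit in Definition~\ref{noninj_a_aqpa_def}. One small inaccuracy: $x\mapsto x_q=qxq|_{qH}$ is not a $*$-homomorphism (compression to a corner is not multiplicative), but this is harmless since your argument only uses its linearity, contractivity, and the fact that every contraction in $\mathcal{M}_q$ lifts to a contraction in $\mathcal{M}$.
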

\begin{proof}
 From Theorems \ref{linf<->sql}, \ref{la=lap} and Remark \ref{B_a(M^sa)->M^sa} $L_\infty(a)$ is isometrically isomorphic to $\mathcal{M}_q$. Therefore, the dual $L_\infty^*(a)$ is isometrically
 isomorphic to $\mathcal{M}_q^*$ and $\|\varphi\|_a=\|\widehat{\varphi}\|_{a_q}=\|U^t(\widehat{\varphi})\|=\|a^\frac{1}{2}_q \widehat{\varphi} a^\frac{1}{2}_q\|
 =\|a^\frac{1}{2}\varphi a^\frac{1}{2}\|$, where $U^t$ is the isomorphism from Corollary \ref{L_inf->M}.
 
\end{proof}

\begin{theorem}\label{unbounded_equation}
For any $\varphi$ in $\mathfrak{D}_a^h$ the equality $\|\varphi\|_a=\|a^\frac{1}{2} \varphi a^\frac{1}{2}\|$ holds. 
\end{theorem}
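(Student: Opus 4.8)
The plan is to reduce the statement to the injective case, which is already in hand. Let $p$ be the projection onto $\ker a$ and $q=\mathbf 1-p$ (so $aq=qa=a$), let $a_q\eta\mathcal M_q$ be the induced injective positive selfadjoint operator, and for $\varphi\in\mathfrak{D}_a^h$ let $\widetilde\varphi\in(\mathcal M_q)_*^h$ be the functional induced by $q\varphi q$. Since each $a_\lambda$ satisfies $a_\lambda=qa_\lambda q$ and $a_\lambda|_{qH}=(a_q)_\lambda$, one checks routinely that $\varphi(a)=\widetilde\varphi(a_q)$ for $\varphi\in\mathfrak{D}_a^+$, that $\widetilde\varphi\in\mathfrak{D}_{a_q}^h$, and that $a^{\frac12}\varphi a^{\frac12}(x)=a_q^{\frac12}\widetilde\varphi a_q^{\frac12}(x_q)$ for every $x\in\mathcal M$ (both sides in the sense of Theorem \ref{definition_afa}); hence $\|a^{\frac12}\varphi a^{\frac12}\|=\|a_q^{\frac12}\widetilde\varphi a_q^{\frac12}\|$, because the right-hand functional depends on $x$ only through $x_q$ and $\{x_q:\|x\|\le1\}$ is the whole unit ball of $\mathcal M_q$. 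Applying Corollary \ref{L_inf->M} to the injective operator $a_q$ (together with the remark preceding it, identifying the isometry $V$ on $\mathfrak{D}_{a_q}^h$ with $\psi\mapsto a_q^{\frac12}\psi a_q^{\frac12}$) gives $\|\widetilde\varphi\|_{a_q}=\|a_q^{\frac12}\widetilde\varphi a_q^{\frac12}\|$. Thus the theorem is reduced to proving $\|\varphi\|_a=\|\widetilde\varphi\|_{a_q}$ for $\varphi\in\mathfrak{D}_a^h$; when $a$ is injective this is the trivial identity $\|\varphi\|_a=\|\varphi\|_a$, so the argument below is uniform in $a$.

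The inequality $\|\widetilde\varphi\|_{a_q}\le\|\varphi\|_a$ is immediate, since a decomposition $\varphi=\varphi_1-\varphi_2$ in $\mathfrak{D}_a^+$ yields $\widetilde\varphi=\widetilde{\varphi_1}-\widetilde{\varphi_2}$ with $\widetilde{\varphi_i}(a_q)=\varphi_i(a)$. The converse rests on the following claim, which I expect to be the main obstacle:

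\emph{Claim: if $\psi\in\mathfrak{D}_a^h$ and $q\psi q=0$, then $\|\psi\|_a=0$.}

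I would prove the Claim by a ``$2\times2$ off-diagonal'' construction. Splitting $\psi=p\psi p+(p\psi q+q\psi p)$ and noting that $(p\xi p)(a)=0$ for every $\xi\in\mathfrak{D}_a^+$ (as $a_\lambda p=0$), one gets $\|p\psi p\|_a=0$, so by subadditivity of the seminorm it suffices to treat $\psi$ with $p\psi p=q\psi q=0$. For such $\psi$ fix a normal representation $\psi=\sum_j\langle\,\cdot\,\xi_j,\eta_j\rangle$ with $\sum\|\xi_j\|^2<\infty$, $\sum\|\eta_j\|^2<\infty$, $\eta_j\in D(a^{\frac12})$ and $\sum\|a^{\frac12}\eta_j\|^2<\infty$ — such a representation exists for any member of $\mathfrak{D}_a^h$, by interleaving the standard representations of its positive parts, exactly as in the proof of Theorem \ref{definition_afa}. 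Put $\rho_q:=\sum_j\langle\,\cdot\,q\eta_j,q\eta_j\rangle$ and $\rho_p:=\sum_j\langle\,\cdot\,p\xi_j,p\xi_j\rangle$; these lie in $\mathcal M_*^+$, with $\rho_q=q\rho_q q$, $\rho_p=p\rho_p p$, $\rho_p(a)=0$ and $\rho_q(a)=\sum_j\|a^{\frac12}\eta_j\|^2<\infty$ (here the special choice of representation is crucial). For $t>0$ set $\chi_2:=t\rho_q+t^{-1}\rho_p$ and $\chi_1:=\chi_2+\psi$. Evaluating on $x^*x$ and writing $u:=xq$, $v:=xp$, the hypotheses $p\psi p=q\psi q=0$ give $\chi_1(x^*x)=t\rho_q(u^*u)+t^{-1}\rho_p(v^*v)+2\,\mathrm{Re}\,\psi(u^*v)$, while $\psi(u^*v)=\sum_j\langle v(p\xi_j),u(q\eta_j)\rangle$ yields, by Cauchy--Schwarz, $|\psi(u^*v)|^2\le\rho_q(u^*u)\,\rho_p(v^*v)$; since $tA+t^{-1}B\ge2\sqrt{AB}$ for $A,B\ge0$, we get $\chi_1(x^*x)\ge0$, hence $\chi_1\in\mathcal M_*^+$. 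Because $\psi(a_\lambda)=(q\psi q)(a_\lambda)=0$, both $\chi_1$ and $\chi_2$ belong to $\mathfrak{D}_a^+$ with $\chi_1(a)=\chi_2(a)=t\rho_q(a)$, so $\|\psi\|_a\le\chi_1(a)+\chi_2(a)=2t\rho_q(a)$; letting $t\to0^+$ gives $\|\psi\|_a=0$.

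Granting the Claim, the remaining inequality follows: given a decomposition $\widetilde\varphi=\chi_1-\chi_2$ in $\mathfrak{D}_{a_q}^+$, lift $\chi_i$ to $\widehat\chi_i\in\mathfrak{D}_a^+$ by $\widehat\chi_i(x):=\chi_i(x_q)$, so that $\widehat\chi_i(a)=\chi_i(a_q)$ and $\widehat\chi_1-\widehat\chi_2=q\varphi q$; since $q(\varphi-q\varphi q)q=0$, the Claim lets us write $\varphi-q\varphi q=\eta_1-\eta_2$ in $\mathfrak{D}_a^+$ with $\eta_1(a)+\eta_2(a)<\varepsilon$, and then $\varphi=(\widehat\chi_1+\eta_1)-(\widehat\chi_2+\eta_2)$ gives $\|\varphi\|_a<\chi_1(a_q)+\chi_2(a_q)+\varepsilon$. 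Taking the infimum over decompositions of $\widetilde\varphi$ and letting $\varepsilon\to0$ yields $\|\varphi\|_a\le\|\widetilde\varphi\|_{a_q}$, which together with the first paragraph completes the proof.
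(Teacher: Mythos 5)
Your proposal is correct, and its skeleton is the same as the paper's: reduce to the compression $a_q$ on $qH$ (where $a_q$ is injective), use the already established injective-case isometry to get $\|\widetilde\varphi\|_{a_q}=\|a_q^{\frac12}\widetilde\varphi a_q^{\frac12}\|$, and identify $\|a^{\frac12}\varphi a^{\frac12}\|$ with $\|a_q^{\frac12}\widetilde\varphi a_q^{\frac12}\|$. The genuine difference is in the step $\|\varphi\|_a=\|\widetilde\varphi\|_{a_q}$: the paper presents this as an immediate rewriting of the infimum (the chain $\inf\{\varphi^1(qaq)+\varphi^2(qaq)\}=\inf\{\widehat{\varphi^1}(a_q)+\widehat{\varphi^2}(a_q)\}$), whereas only the inequality $\|\widetilde\varphi\|_{a_q}\leq\|\varphi\|_a$ is automatic; the reverse needs lifting a decomposition of $\widetilde\varphi$ back to one of $\varphi$, and the natural lift only reproduces $q\varphi q$, not $\varphi$. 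Your Claim --- that $q\psi q=0$ forces $\|\psi\|_a=0$, proved via the $t\rho_q+t^{-1}\rho_p$ off-diagonal perturbation and Cauchy--Schwarz --- is exactly what closes this gap, and it is a real argument, not a formality: for instance in $M_2(\mathbb{C})$ with $a=\mathrm{diag}(1,0)$ and $\psi(y)=y_{12}+y_{21}$ the Jordan decomposition gives $\psi^+(a)+\psi^-(a)=1$ even though $\|\psi\|_a=0$, so one cannot reach the infimum by any canonical decomposition and the $t\to0^+$ trick (cheap mass on $pH$, vanishing mass on $qH$) is needed. Your treatment of the injective case via the isometry $V$ of Corollary \ref{L_inf->M} rather than Theorem \ref{most_wide_extention} is an inessential variation. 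In short: same route as the paper, but your write-up supplies a justification for the one equality the paper asserts without proof, and the auxiliary facts you label routine (the interleaved representation with $\sum_j\|a^{\frac12}\eta_j\|^2<\infty$, the identity $a^{\frac12}\varphi a^{\frac12}(x)=a_q^{\frac12}\widetilde\varphi a_q^{\frac12}(x_q)$, and the surjectivity of $x\mapsto x_q$ on unit balls) are indeed routine.
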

\begin{proof}
 If an operator $a$ is injective, then since $\mathfrak{D}_a^h$ is isometrically embedded into $L_\infty^*(a)$, it follows that
 the equality $\|\varphi\|_a=\|a^\frac{1}{2}\varphi a^\frac{1}{2}\|$ holds 
 for any $\varphi$ in $\mathfrak{D}_a^h$ by Theorem \ref{most_wide_extention}.
 
 If operator $a$ is not injective, then $\|\varphi\|_a=\inf\{\varphi^1(a)+\varphi^2(a)|\varphi=\varphi^1-\varphi^2, \varphi^1,\varphi^2\in\mathfrak{D}_a^+\}=
 \inf\{\varphi^1(qaq)+\varphi^2(qaq)|\varphi=\varphi^1-\varphi^2, \varphi^1,\varphi^2\in\mathfrak{D}_a^+\}
 =\inf\{\widehat{\varphi^1}(a_q)+\widehat{\varphi^2}(a_q)|\widehat{\varphi}=\widehat{\varphi^1}-\widehat{\varphi^2}, \widehat{\varphi^1}, \widehat{\varphi^2} \in\mathfrak{D}_a^+\}=\|\widehat{\varphi}\|_{a_q}=
 \|a^\frac{1}{2}_q\widehat{\varphi} a^\frac{1}{2}_q\|=\|a^\frac{1}{2}\varphi a^\frac{1}{2}\|$.
 
\end{proof}

\begin{remark}
 According to Theorem \ref{linf<->sql}, for $\varphi\in\mathfrak{D}_a$ the normal functional 
 $a^\frac{1}{2}\varphi a^\frac{1}{2}$ from Definition \ref{noninj_a_aqpa_def} and 
 normal functional $a^\frac{1}{2}\varphi a^\frac{1}{2}$ defined in Theorem \ref{definition_afa} 
 coincide as the functionals on $\mathcal{M}$.
\end{remark}

\begin{corollary}\label{||_a=limit}
 For each $\varphi\in \mathfrak{D}_a^h$ the equality $\|\varphi\|_a=\lim\limits_{\lambda\to+\infty}\|\varphi\|_{a_\lambda}$ holds.
\end{corollary}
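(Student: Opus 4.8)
The plan is to combine Theorem~\ref{unbounded_equation} with the monotone convergence of $a_\lambda \nearrow a$ through the functional $a^\frac12\varphi a^\frac12$. First I would fix $\varphi \in \mathfrak{D}_a^h$ and recall that by Theorem~\ref{unbounded_equation} we have $\|\varphi\|_a = \|a^\frac12\varphi a^\frac12\|$, and for every $\lambda$ the operator $a_\lambda$ is bounded, so $\|\varphi\|_{a_\lambda} = \|a_\lambda^\frac12\varphi a_\lambda^\frac12\|$ as well (this is \cite[Theorem 2]{SkvTik98}, or again Theorem~\ref{unbounded_equation}). So the whole statement reduces to proving that $\|a_\lambda^\frac12\varphi a_\lambda^\frac12\| \to \|a^\frac12\varphi a^\frac12\|$ as $\lambda \to +\infty$.

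Next I would establish two halves. For the ``$\leq$'' direction I would use that $\varphi_1(a_\lambda)+\varphi_2(a_\lambda) \leq \varphi_1(a)+\varphi_2(a)$ for any decomposition $\varphi = \varphi_1 - \varphi_2$ with $\varphi_i \in \mathfrak{D}_a^+$ (exactly as in the proof of Theorem~\ref{faithfullness}); taking the infimum over such decompositions gives $\|\varphi\|_{a_\lambda} \leq \|\varphi\|_a$ for all $\lambda$, so $\limsup_{\lambda}\|\varphi\|_{a_\lambda} \leq \|\varphi\|_a$. The monotonicity of $\lambda \mapsto a_\lambda$ also shows the net $(\|\varphi\|_{a_\lambda})_\lambda$ is nondecreasing, so the limit exists and equals $\sup_\lambda \|\varphi\|_{a_\lambda}$. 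It remains to prove $\sup_\lambda \|\varphi\|_{a_\lambda} \geq \|\varphi\|_a$, i.e. the ``$\geq$'' direction.

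For the reverse inequality I would work on the functional side. Writing $\varphi = \sum_i \langle \cdot\, f_i, f_i\rangle - \sum_j \langle\cdot\, g_j, g_j\rangle$ via two positive functionals in $\mathfrak{D}_a^+$, the Corollary after Theorem~\ref{definition_afa} gives $a^\frac12\varphi a^\frac12(x) = \sum_i \langle x a^\frac12 f_i, a^\frac12 f_i\rangle - \sum_j \langle x a^\frac12 g_j, a^\frac12 g_j\rangle$, and the analogous formula with $a_\lambda$ in place of $a$; since $a_\lambda^\frac12 h \to a^\frac12 h$ for every $h \in D(a^\frac12)$ and the series converge uniformly in $\lambda$ (the bound $|\langle x a_\lambda^\frac12 f_i, a_\lambda^\frac12 f_i\rangle| \leq \|x\|\|a^\frac12 f_i\|^2$ from the proof of Theorem~\ref{definition_afa}), one gets $a_\lambda^\frac12\varphi a_\lambda^\frac12(x) \to a^\frac12\varphi a^\frac12(x)$ for each fixed $x \in \mathcal{M}$, with $\|a_\lambda^\frac12\varphi a_\lambda^\frac12\| \leq \|\varphi\|_a$ uniformly. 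Thus $a_\lambda^\frac12\varphi a_\lambda^\frac12 \to a^\frac12\varphi a^\frac12$ pointwise on $\mathcal{M}$, hence weakly in $\mathcal{M}_*$, and by lower semicontinuity of the norm under weak convergence $\|a^\frac12\varphi a^\frac12\| \leq \liminf_\lambda \|a_\lambda^\frac12\varphi a_\lambda^\frac12\| = \lim_\lambda \|\varphi\|_{a_\lambda}$. Combining with the first half finishes the proof.

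I expect the main obstacle to be the careful justification that the net $a_\lambda^\frac12\varphi a_\lambda^\frac12$ genuinely converges weakly (not merely pointwise on a dense subset) to $a^\frac12\varphi a^\frac12$ in $\mathcal{M}_*$, and that the uniform bound legitimizes passing lower semicontinuity of the norm through the limit; this is where the uniform convergence of the defining series in Theorem~\ref{definition_afa} does the real work. An alternative, perhaps cleaner, route avoids this entirely: note $\|\varphi\|_{a_\lambda} \geq |\,a_\lambda^\frac12\varphi a_\lambda^\frac12(x)\,| = |\varphi(a_\lambda^\frac12 x a_\lambda^\frac12)|$ for any contraction $x \in \mathcal{M}$, and let $\lambda \to +\infty$ to get $\lim_\lambda \|\varphi\|_{a_\lambda} \geq |a^\frac12\varphi a^\frac12(x)|$; taking the supremum over contractions $x$ yields $\lim_\lambda \|\varphi\|_{a_\lambda} \geq \|a^\frac12\varphi a^\frac12\| = \|\varphi\|_a$, which together with the $\limsup$ bound from the second paragraph closes the argument.
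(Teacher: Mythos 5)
Your proof is correct and takes essentially the same route as the paper: the first half (monotonicity of $\lambda\mapsto\|\varphi\|_{a_\lambda}$ and the bound $\|\varphi\|_{a_\lambda}\leq\|\varphi\|_a$) is identical, and your lower bound — whether phrased via weak lower semicontinuity of the norm in $\mathcal{M}_*$ or via the supremum over contractions $x$ of $|\varphi(a_\lambda^\frac{1}{2}xa_\lambda^\frac{1}{2})|$ — is the same mechanism the paper uses, the only difference being that the paper tests against the single norm-attaining partial isometry $u$ from the polar decomposition of $a^\frac{1}{2}\varphi a^\frac{1}{2}$ rather than taking a supremum. No gaps.
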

\begin{proof}
 Evidently, for the fixed $\varphi\in \mathfrak{D}_a^h$ the mapping $\lambda\in \mathbb{R}^+ \mapsto \|\varphi\|_{a_\lambda}$ is monotone 
 and $\|\varphi\|_{a_\lambda}\leq \|\varphi\|_a$, thus for each $\varphi\in \mathfrak{D}_a^h$ there exists $\lim\limits_{\lambda \to +\infty} \|\varphi\|_{a_\lambda}\leq \|\varphi\|_a$.
 
 According to Theorem \ref{unbounded_equation} the chain of equalities $\|\varphi\|_a=\|a^\frac{1}{2}\varphi a^\frac{1}{2}\|=
 |a^\frac{1}{2}\varphi a^\frac{1}{2}|(1)=a^\frac{1}{2}\varphi a^\frac{1}{2}(u)=|a^\frac{1}{2}\varphi a^\frac{1}{2}(u)|$ holds for
 each $\varphi \in \mathfrak{D}_a^h$, 
 where $u|a^\frac{1}{2}\varphi a^\frac{1}{2}|$ is the polar decomposition of $a^\frac{1}{2} \varphi a^\frac{1}{2}$.
 According to Definition \ref{definition_afa}, $|a^\frac{1}{2}\varphi a^\frac{1}{2}(u)|=
 |\lim\limits_{\lambda \to +\infty} a^\frac{1}{2}_\lambda\varphi a^\frac{1}{2}_\lambda(u)|=
 \lim\limits_{\lambda \to +\infty} |a^\frac{1}{2}_\lambda \varphi a^\frac{1}{2}_\lambda(u)|\leq
 \lim\limits_{\lambda \to +\infty} \|a^\frac{1}{2}_\lambda \varphi a^\frac{1}{2}_\lambda\|$ for each $\varphi\in\mathfrak{D}_a^h$.
 Since $\|a^\frac{1}{2}_\lambda \varphi a^\frac{1}{2}_\lambda\|=\|\varphi\|_{a_\lambda}$, it follows
 $\|\varphi\|_a\leq \lim\limits_{\lambda \to +\infty} \|\varphi\|_{a_\lambda} \leq \|\varphi\|_a$ for each $\varphi\in\mathfrak{D}_a^h$.
 
\end{proof}

\begin{corollary}\label{afa=|f|(a)}
 If $a\eta\mathcal{C}(\mathcal{M})$, then $|\varphi|(a)=\|a^\frac{1}{2} \varphi a^\frac{1}{2}\|$ for each $\varphi \in \mathfrak{D}_a^h$,
\end{corollary}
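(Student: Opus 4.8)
The plan is to reduce to the bounded central case on the approximants $a_\lambda$ and then pass to the limit via Corollary~\ref{||_a=limit}.

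First I would record that if $a\eta\mathcal{C}(\mathcal{M})$, then the spectral projections $e(\lambda)$ of $a$ lie in $\mathcal{C}(\mathcal{M})$, and hence every bounded Borel function of $a$ lies in $\mathcal{C}(\mathcal{M})$; in particular $a_\lambda=\lambda\overline{a(\lambda+a)^{-1}}\in\mathcal{C}(\mathcal{M})$ for every $\lambda\in\mathbb{R}^+\setminus\{0\}$. Since each $a_\lambda$ is bounded, $\mathfrak{D}_{a_\lambda}^h=\mathcal{M}_*^h\supseteq\mathfrak{D}_a^h$, so $\|\varphi\|_{a_\lambda}$ is defined for every $\varphi\in\mathfrak{D}_a^h$, and \cite[Theorem 2]{SkvTik98} gives $\|\varphi\|_{a_\lambda}=\|a_\lambda^\frac{1}{2}\varphi a_\lambda^\frac{1}{2}\|$. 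As $a_\lambda$ is central, \cite[Corollary 1]{NovTik15} applies and yields $\|a_\lambda^\frac{1}{2}\varphi a_\lambda^\frac{1}{2}\|=|\varphi|(a_\lambda)$. Thus $\|\varphi\|_{a_\lambda}=|\varphi|(a_\lambda)$ for all $\lambda$ and all $\varphi\in\mathfrak{D}_a^h$.

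Next I would take $\lambda\to+\infty$. On the left-hand side, Corollary~\ref{||_a=limit} gives $\lim_{\lambda\to+\infty}\|\varphi\|_{a_\lambda}=\|\varphi\|_a$. For the right-hand side, I would first note that $|\varphi|\in\mathcal{M}_*^+$ and, if $\varphi=\varphi_1-\varphi_2$ with $\varphi_1,\varphi_2\in\mathfrak{D}_a^+$, then $|\varphi|\le\varphi_1+\varphi_2$, so $|\varphi|(a)\le(\varphi_1+\varphi_2)(a)<+\infty$, i.e. $|\varphi|\in\mathfrak{D}_a^+$; the definition $|\varphi|(a):=\lim_{\lambda\to+\infty}|\varphi|(a_\lambda)$ from Section~2 then applies, giving $\lim_{\lambda\to+\infty}|\varphi|(a_\lambda)=|\varphi|(a)$. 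Hence $\|\varphi\|_a=|\varphi|(a)$. Combining this with Theorem~\ref{unbounded_equation}, which states $\|\varphi\|_a=\|a^\frac{1}{2}\varphi a^\frac{1}{2}\|$ for $\varphi\in\mathfrak{D}_a^h$, yields $|\varphi|(a)=\|a^\frac{1}{2}\varphi a^\frac{1}{2}\|$, as claimed.

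The only point that is not pure bookkeeping is the very first step, namely that the centrality of $a$ is inherited by all the bounded approximants $a_\lambda$; this is immediate from the spectral theorem, since $a\eta\mathcal{C}(\mathcal{M})$ means precisely that the spectral projections of $a$ commute with $\mathcal{M}'$, and $a_\lambda$ is a bounded function of those projections. Everything else is a routine limit argument along the monotone net $a_\lambda\nearrow a$ together with identities already established in the preceding sections.
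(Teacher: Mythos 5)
Your core argument is the same as the paper's: pass to the bounded approximants $a_\lambda$, which lie in $\mathcal{C}(\mathcal{M})$ because the spectral projections of $a$ do, combine \cite[Theorem 2]{SkvTik98} with \cite[Corollary 1]{NovTik15} to get $\|\varphi\|_{a_\lambda}=\|a_\lambda^\frac{1}{2}\varphi a_\lambda^\frac{1}{2}\|=|\varphi|(a_\lambda)$, and then let $\lambda\to+\infty$ using Corollary \ref{||_a=limit} together with the definition of $|\varphi|(a)$ and Theorem \ref{unbounded_equation}. That is exactly the proof in the paper, so on this level the proposal is fine.

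One intermediate claim, however, is false as stated: from $\varphi=\varphi_1-\varphi_2$ with $\varphi_1,\varphi_2\in\mathfrak{D}_a^+$ you cannot conclude the functional inequality $|\varphi|\le\varphi_1+\varphi_2$ in a noncommutative von Neumann algebra. On $M_2(\mathbb{C})$ with $\varphi_i=\mathrm{Tr}(h_i\,\cdot)$ this would amount to the operator inequality $|h_1-h_2|\le h_1+h_2$ for positive matrices, which fails, e.g. for $h_1=\bigl(\begin{smallmatrix}1&1\\1&1\end{smallmatrix}\bigr)$, $h_2=\bigl(\begin{smallmatrix}1&0\\0&0\end{smallmatrix}\bigr)$, where $|h_1-h_2|=\tfrac{1}{\sqrt5}\bigl(\begin{smallmatrix}2&1\\1&3\end{smallmatrix}\bigr)$ and $h_1+h_2-|h_1-h_2|$ has a negative diagonal entry; indeed, the validity of inequalities of this kind is precisely what Theorem \ref{center_affilation_theorem} shows to be equivalent to centrality. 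Fortunately you use this claim only to justify that the definition of $|\varphi|(a)$ ``applies,'' which is unnecessary: Section 2 defines $\psi(a):=\lim_{\lambda\to+\infty}\psi(a_\lambda)$ for every $\psi\in\mathcal{M}_*^+$ as a monotone limit in $[0,+\infty]$, and finiteness of $|\varphi|(a)$ comes out a posteriori from the established equality $\lim_{\lambda\to+\infty}|\varphi|(a_\lambda)=\|\varphi\|_a<+\infty$. (If you want an a priori bound instead, use the hypothesis $a\eta\mathcal{C}(\mathcal{M})$ and implication $(i)\Rightarrow(iii)$ of Theorem \ref{center_affilation_theorem}, applied to $\varphi$ and $-\varphi$, to get $m_a(|\varphi|)\le m_a(\varphi_1)+m_a(\varphi_2)$.) With that one sentence removed or repaired, your proof coincides with the paper's.
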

\begin{proof}
 Note that $|\varphi|(a)\equiv\lim\limits_{\lambda \to +\infty} |\varphi|(a_\lambda)$
 and $a_\lambda \in \mathcal{C}(\mathcal{M})$. According to \cite[Corollary 1]{NovTik15},
 $|\varphi|(a_\lambda)=\|a^\frac{1}{2}_\lambda\varphi a^\frac{1}{2}_\lambda\|$ for each $\varphi\in\mathcal{M}_*$. 
 At last, by Corollary \ref{||_a=limit}
 $\lim\limits_{\lambda \to +\infty} \|a_\lambda^\frac{1}{2}\varphi a_\lambda^\frac{1}{2}\|=
 \lim\limits_{\lambda \to +\infty} \|\varphi\|_{a_\lambda}=\|\varphi\|_a$ for each $\varphi \in \mathfrak{D}_a^h$.
 
\end{proof}

Summarizing all the facts of this section, it is possible to define $\|\cdot\|_a$ on $\mathfrak{D}_a^h$ in the different ways,
which are equivalent. First and basic, as the mapping $\varphi \in \mathfrak{D}_a^h \mapsto \inf\{\varphi_1(a)+\varphi_2(a)|
\varphi=\varphi_1-\varphi_2, \varphi_1,\varphi_2\in\mathfrak{D}_a^+\}$, which actually coincides with the mappings
$\varphi\in\mathfrak{D}_a^h\mapsto \inf\{\lim\limits_{\lambda \to +\infty} 
(\varphi_1(a_\lambda)+\varphi_2(a_\lambda))| \varphi=\varphi_1-\varphi_2, \varphi_1,\varphi_2\in \mathcal{M}^+_*\}$
and $\varphi\in\mathfrak{D}_a^h\mapsto \lim\limits_{\lambda\to+\infty} \inf\{\varphi_1(a_\lambda)+
\varphi_2(a_\lambda)|\varphi=\varphi_1-\varphi_2, \varphi_1,\varphi_2\in\mathcal{M}_*^+\}$.
Another one as the mappings $\varphi\in\mathfrak{D}_a^h \mapsto \sup\limits_{\|x\|=1}\lim\limits_{\lambda\to+\infty} 
|\varphi(a^\frac{1}{2}_\lambda x a^\frac{1}{2}_\lambda)|$, which actually coincides
with the mapping $\varphi\in\mathfrak{D}_a^h \mapsto \lim\limits_{\lambda\to+\infty} \sup\limits_{\|x\|=1} 
|\varphi(a^\frac{1}{2}_\lambda x a^\frac{1}{2}_\lambda)|$. At last if $a$ is affiliated with the center $\mathcal{C}(\mathcal{M})$
of an algebra $\mathcal{M}$, then we are able to define $\|\cdot\|_a$ as the mapping $\varphi\in \mathfrak{D}_a^h\mapsto \lim\limits_{\lambda\to+\infty}|\varphi|(a_\lambda)$.
The latter definition is dual to Segal's definition of $\|\cdot\|_1$ as the mapping $x\in \mathfrak{m}_{\tau} \mapsto \tau(|x|)$.\cite{Takesakii}

There is one more possible equivalent definition of $\|\cdot\|_a$ for the case of a semifinite $\mathcal{M}$ with 
a faithfull semifinite normal trace $\tau$. Let $\widetilde{\tau}$ be 
the extension of $\tau$ onto $\mathfrak{m}_\tau=\mathrm{lin}_\mathbb{C} \mathfrak{m}_\tau^+$, where 
$\mathfrak{m}^+_\tau=\{x\in\mathcal{M}^+|\tau(x)<+\infty\}$. We denote $\mathrm{lin}_\mathbb{R} \mathfrak{m}_\tau^+$ 
as $\mathfrak{m}_\tau^{sa}$. By \cite[Theorem V.2.18]{Takesakii}, $x\widetilde{\tau}$ is a normal functional for 
any $x\in\mathfrak{m}_\tau$ and $\|x\widetilde{\tau}\|=\|x\|_\tau=\tau(|x|)$.

\begin{theorem}\label{semifinite}
For any $\varphi=k\widetilde{\tau}$ in $\mathfrak{D}_a$ ($k\in\mathfrak{m}_\tau$), if $\overline{a^\frac{1}{2}ka^\frac{1}{2}}\in\mathcal{M}$, then the equality 
$\|\varphi\|_a=\tau(|\overline{a^\frac{1}{2} k a^\frac{1}{2}}|)$ holds.
\end{theorem}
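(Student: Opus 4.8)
\emph{Proof proposal.} Write $b:=\overline{a^{1/2}ka^{1/2}}$, which is assumed to lie in $\mathcal M$. By Theorem~\ref{unbounded_equation} we have $\|\varphi\|_a=\|a^{1/2}\varphi a^{1/2}\|$, so it suffices to identify $a^{1/2}\varphi a^{1/2}$ with $b\widetilde\tau$ and to know $b\in\mathfrak m_\tau$; then $\|\varphi\|_a=\|b\widetilde\tau\|=\tau(|b|)=\tau(|\overline{a^{1/2}ka^{1/2}}|)$ by \cite[Theorem V.2.18]{Takesakii}. The computational device throughout is the positive contraction $r_\lambda:=a_\lambda^{1/2}a^{-1/2}=\lambda^{1/2}(\lambda+a)^{-1/2}\in\mathcal M$ with $r_\lambda\nearrow\mathbf 1$ and $a_\lambda^{1/2}=r_\lambda a^{1/2}=a^{1/2}r_\lambda$ on $D(a^{1/2})$, together with the operator identity $a_\lambda^{1/2}ka_\lambda^{1/2}=r_\lambda b r_\lambda$, which holds because the closure $b$ is bounded.

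First I would establish $b\in\mathfrak m_\tau$. Since $(k\widetilde\tau)^*=k^*\widetilde\tau$, write $\varphi=\varphi_1-\varphi_2+i\varphi_3-i\varphi_4$ with $\varphi_j=k_j\widetilde\tau$, where $k_1,k_2$ and $k_3,k_4$ are the positive and negative parts of $\operatorname{Re}k,\operatorname{Im}k\in\mathfrak m_\tau^{sa}$, so each $k_j\in\mathfrak m_\tau^+$. By minimality of the Jordan decomposition and $\varphi\in\mathfrak D_a$ (if $\operatorname{Re}\varphi=\eta_1-\eta_2$ with $\eta_i\in\mathfrak D_a^+$ then $\varphi_1\le\eta_1$, $\varphi_2\le\eta_2$, hence $\varphi_j(a)<+\infty$), each $\varphi_j\in\mathfrak D_a^+$. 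Put $b_j:=\overline{a^{1/2}k_ja^{1/2}}$; the positive operators $a_\lambda^{1/2}k_ja_\lambda^{1/2}=r_\lambda b_j r_\lambda$ are bounded uniformly by $\|b_j\|$ and converge weakly to $b_j$, so lower semicontinuity of $\tau$ gives $\tau(b_j)\le\liminf_\lambda\tau(a_\lambda^{1/2}k_ja_\lambda^{1/2})=\lim_\lambda\varphi_j(a_\lambda)=\varphi_j(a)<+\infty$, i.e. $b_j\in\mathfrak m_\tau$. Finally $b=b_1-b_2+ib_3-ib_4\in\mathfrak m_\tau$, the equality following because both sides are bounded operators whose sesquilinear form on $D(a^{1/2})\times D(a^{1/2})$ equals $\widehat{a^{1/2}ka^{1/2}}$.

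Next, the identification $a^{1/2}\varphi a^{1/2}=b\widetilde\tau$. For $x\in\mathcal M$, since $\mathfrak m_\tau$ is a two-sided ideal, $\varphi(a_\lambda^{1/2}xa_\lambda^{1/2})=\widetilde\tau(ka_\lambda^{1/2}xa_\lambda^{1/2})=\widetilde\tau(a_\lambda^{1/2}ka_\lambda^{1/2}x)$ by the trace property, so by Theorem~\ref{definition_afa}, $a^{1/2}\varphi a^{1/2}(x)=\lim_\lambda\tau(r_\lambda b r_\lambda\,x)$. It remains to show $r_\lambda b r_\lambda\to b$ in $L^1(\mathcal M,\tau)$: writing $b=w|b|$ and noting $b^*=\overline{a^{1/2}k^*a^{1/2}}\in\mathfrak m_\tau$, estimate $\|r_\lambda b r_\lambda-b\|_{1,\tau}\le\|r_\lambda b(r_\lambda-\mathbf 1)\|_{1,\tau}+\|(r_\lambda-\mathbf 1)b\|_{1,\tau}$ and bound each summand by noncommutative Hölder in terms of $\|(r_\lambda-\mathbf 1)\xi\|_{2,\tau}$ for fixed $L^2$-vectors $\xi$ built from the factorizations $b=w|b|^{1/2}\cdot|b|^{1/2}$ and $b^*=w^*|b^*|^{1/2}\cdot|b^*|^{1/2}$ (legitimate since $|b|^{1/2},|b^*|^{1/2}\in\mathfrak n_\tau$); these tend to $0$ because $r_\lambda-\mathbf 1\to\mathbf 0$ strongly and the $\mathcal M$-action on $L^2(\mathcal M,\tau)$ is $\sigma$-strongly continuous on bounded sets. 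Hence $\tau(r_\lambda b r_\lambda\,x)\to\tau(bx)$ for all $x$, giving $a^{1/2}\varphi a^{1/2}=b\widetilde\tau$ and, by the reduction above, $\|\varphi\|_a=\tau(|\overline{a^{1/2}ka^{1/2}}|)$.

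The main obstacle is the first part together with the unbounded-operator bookkeeping it rests on: carefully justifying the identity $a_\lambda^{1/2}ka_\lambda^{1/2}=r_\lambda\overline{a^{1/2}ka^{1/2}}r_\lambda$ between bounded operators, and the fact that a bounded closure $\overline{a^{1/2}xa^{1/2}}\in\mathcal M$ is determined by the form $\widehat{a^{1/2}xa^{1/2}}$ on $D(a^{1/2})$ (needed both for the additivity $b=\sum_j\varepsilon_j b_j$ and implicitly elsewhere). Once these operator lemmas and the membership $b\in\mathfrak m_\tau$ are secured, the remaining steps — the trace cyclicity rewriting and the $L^1$-convergence $r_\lambda b r_\lambda\to b$ — are routine given Theorem~\ref{unbounded_equation}, Theorem~\ref{definition_afa} and \cite[Theorem V.2.18]{Takesakii}. (Alternatively one can bypass the identification of $a^{1/2}\varphi a^{1/2}$ by invoking Corollary~\ref{||_a=limit}: $\|\varphi\|_a=\lim_\lambda\|\varphi\|_{a_\lambda}=\lim_\lambda\tau(|a_\lambda^{1/2}ka_\lambda^{1/2}|)$, and then prove $\tau(|r_\lambda b r_\lambda|)\to\tau(|b|)$ by matching Hölder-type upper and duality-type lower estimates; this variant needs $b\in\mathfrak m_\tau$ exactly as above.)
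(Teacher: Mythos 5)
Your proposal follows the same overall route as the paper: reduce, via Theorem~\ref{unbounded_equation} and \cite[Theorem V.2.18]{Takesakii}, to identifying $a^{\frac{1}{2}}\varphi a^{\frac{1}{2}}$ with $b\widetilde\tau$, $b=\overline{a^{\frac{1}{2}}ka^{\frac{1}{2}}}\in\mathfrak m_\tau$; and your convergence step is a heavier variant of the paper's (there, $a_\lambda^{\frac{1}{2}}ka_\lambda^{\frac{1}{2}}\to b$ $\sigma$-weakly with uniform bound, normality of $x\widetilde\tau$ for $x\in\mathfrak m_\tau$ and $\sigma$-weak density already suffice; no H\"older/$L^1$ estimates are needed). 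The genuine gap is in your first part. The ``minimality of the Jordan decomposition'' you invoke --- if $\mathrm{Re}\,\varphi=\eta_1-\eta_2$ with $\eta_i\geq 0$ then $\varphi_1\leq\eta_1$, $\varphi_2\leq\eta_2$ --- is false for noncommutative $\mathcal M$: $\mathcal M_*^h$ is not a vector lattice, and the Jordan decomposition is norm-additive but not order-minimal (already in $M_2(\mathbb C)$, for $\varphi=\omega_{e_1}-\omega_f$ with $f$ neither parallel nor orthogonal to $e_1$, the support of $\varphi^+$ is not dominated by $P_{e_1}$, so $\varphi^+\not\leq\omega_{e_1}$). Indeed, if your lemma held, condition (iii) of Theorem~\ref{center_affilation_theorem} would hold for every $a$, whereas the paper shows it is equivalent to $a\,\eta\,\mathcal C(\mathcal M)$.

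Moreover, the conclusion you want from that lemma can itself fail under the hypotheses of the theorem, so the route through Jordan components cannot be repaired by a better argument for it. Take $\mathcal M=\bigoplus_n M_2(\mathbb C)$, $\tau=\sum_n\mathrm{Tr}$, $a=\bigoplus_n\mathrm{diag}(1,n^2)$, $k=\bigoplus_n n^{-2}(P_{e_1}-P_{f_n})$ with $f_n=(\cos\theta_n,\sin\theta_n)$, $\sin\theta_n=\frac1n$. Then $k\in\mathfrak m_\tau^{sa}$, $\varphi=k\widetilde\tau\in\mathfrak D_a^h$ (both $\bigoplus_n n^{-2}\omega_{e_1}$ and $\bigoplus_n n^{-2}\omega_{f_n}$ have finite $a$-integral) and $\overline{a^{\frac{1}{2}}ka^{\frac{1}{2}}}\in\mathfrak m_\tau$; yet the $n$-th block of $k^+$ is $n^{-2}\sin\theta_n\,P_{g_n}$ with $|\langle g_n,e_2\rangle|^2\to\frac12$, so its $a$-integral is about $n^{-2}\cdot n^{-1}\cdot\frac{n^2}{2}=\frac1{2n}$ and $k^+\widetilde\tau(a)=+\infty$: neither $\varphi_j\in\mathfrak D_a^+$ nor $b_j\in\mathfrak m_\tau$ is available. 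The paper avoids splitting $k$: for positive $k$ it gets $b\in\mathfrak m_\tau^+$ from the factorizations $b=(k^{\frac{1}{2}}a^{\frac{1}{2}})^*\,\overline{k^{\frac{1}{2}}a^{\frac{1}{2}}}$ and $\overline{k^{\frac{1}{2}}ak^{\frac{1}{2}}}=\overline{k^{\frac{1}{2}}a^{\frac{1}{2}}}\,(k^{\frac{1}{2}}a^{\frac{1}{2}})^*$ together with traciality, giving $\tau(b)=\varphi(a)<+\infty$. For general $k$ a workable order is the reverse of yours: first prove $a^{\frac{1}{2}}\varphi a^{\frac{1}{2}}(x)=\tau(bx)$ for $x\in\mathfrak m_\tau$ (your identity $a_\lambda^{\frac{1}{2}}ka_\lambda^{\frac{1}{2}}=r_\lambda b r_\lambda$ needs only $b\in\mathcal M$, not positivity), and then deduce $\tau(|b|)\leq\|a^{\frac{1}{2}}\varphi a^{\frac{1}{2}}\|<+\infty$ from semifiniteness of $\tau$, so that $b\in\mathfrak m_\tau$ is a consequence of the identification rather than an input to it.
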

\begin{proof}
First prove $\overline{a^\frac{1}{2}ka^\frac{1}{2}}\in\mathfrak{m}_\tau$. Assume $k$ positive ($k\widetilde{\tau}\in\mathfrak{D}_a^+$).
For all $f\in D(a^\frac{1}{2})$ the inequality $\|k^\frac{1}{2} a^\frac{1}{2} f\|^2 \leq \|\overline{a^\frac{1}{2}ka^\frac{1}{2}}\|\|f\|^2$
holds. Since  $D(a^\frac{1}{2})$ is dense in $H$, it follows that $\overline{k^\frac{1}{2} a^\frac{1}{2}}\in\mathcal{M}$
and $\|\overline{k^\frac{1}{2} a^\frac{1}{2}}\|\leq \sqrt{\|\overline{a^\frac{1}{2}ka^\frac{1}{2}}\|}$.

Since $\lim\limits_{\lambda \to +\infty} k^\frac{1}{2} a_\lambda^\frac{1}{2} f=k^\frac{1}{2} a^\frac{1}{2} f$ for all 
$f\in D(a^\frac{1}{2})$ and $D(a^\frac{1}{2})$ is dense in $H$, it follows
that $k^\frac{1}{2}a^\frac{1}{2}_\lambda f$ converges to $\overline{k^\frac{1}{2}a^\frac{1}{2}} f$ 
and $a^\frac{1}{2}_\lambda k^\frac{1}{2} f$ converges to $(k^\frac{1}{2} a^\frac{1}{2})^*f$ for all $f\in H$.
At the same time, $\lim\limits_{\lambda \to +\infty} \langle a^\frac{1}{2}_\lambda k^\frac{1}{2} f, a^\frac{1}{2}_\lambda k^\frac{1}{2} f \rangle
=\langle \overline{k^\frac{1}{2} a k^\frac{1}{2}} f, f \rangle$, therefore 
$\overline{k^\frac{1}{2}ak^\frac{1}{2}}=\overline{k^\frac{1}{2} a^\frac{1}{2}}(k^\frac{1}{2} a^\frac{1}{2})^*$.
Moreover, $\overline{a^\frac{1}{2}ka^\frac{1}{2}}=(k^\frac{1}{2} a^\frac{1}{2})^*\overline{k^\frac{1}{2} a^\frac{1}{2}}$, hence
$\tau(\overline{a^\frac{1}{2} k a^\frac{1}{2}})=\tau((k^\frac{1}{2} a^\frac{1}{2})^*\overline{k^\frac{1}{2} a^\frac{1}{2}})=
\tau(\overline{k^\frac{1}{2} a^\frac{1}{2}}(k^\frac{1}{2} a^\frac{1}{2})^*)=\tau(\overline{k^\frac{1}{2} a k^\frac{1}{2}})=
\lim\limits_{\lambda \to +\infty} \tau(k^\frac{1}{2} a_\lambda k^\frac{1}{2})=
k\tau(a)<+\infty$. Therefore, $\overline{a^\frac{1}{2}ka^\frac{1}{2}}\in \mathfrak{m}_\tau^+$.

For all $f\in H$ the chain of inequalities
$\|a^\frac{1}{2}_\lambda k^\frac{1}{2} f\|^2=\langle k^\frac{1}{2} a_\lambda k^\frac{1}{2} f, f \rangle \leq 
\langle \overline{k^\frac{1}{2} a k^\frac{1}{2}} f, f \rangle=\|(k^\frac{1}{2} a^\frac{1}{2})^*f\|^2\leq \|\overline{a^\frac{1}{2} k a^\frac{1}{2}}\|\|f\|^2$
holds. Hence, $\|k^\frac{1}{2} a^\frac{1}{2}_\lambda\|=\|a^\frac{1}{2}_\lambda k^\frac{1}{2}\|\leq \sqrt{\|\overline{a^\frac{1}{2}ka^\frac{1}{2}}\|}$ and
$a^\frac{1}{2}_\lambda k a^\frac{1}{2}_\lambda$ $\sigma$-weakly converges to $\overline{a^\frac{1}{2} k a^\frac{1}{2}}$.
Let $x\in\mathfrak{m}_\tau$, then $a^\frac{1}{2}k\widetilde{\tau} a^\frac{1}{2}(x)=
\lim\limits_{\lambda \to +\infty} k\widetilde{\tau}(a^\frac{1}{2}_\lambda x a^\frac{1}{2}_\lambda)=
\lim\limits_{\lambda \to +\infty} x\widetilde{\tau}(a^\frac{1}{2}_\lambda k a^\frac{1}{2}_\lambda)=
x\widetilde{\tau}(\overline{a^\frac{1}{2} k a^\frac{1}{2}})=\overline{a^\frac{1}{2} k a^\frac{1}{2}}\widetilde{\tau}(x)$.
$\mathfrak{m}_\tau$ is $\sigma$-weakly dense in $\mathcal{M}$, therefore
$a^\frac{1}{2}k\widetilde{\tau} a^\frac{1}{2}=\overline{a^\frac{1}{2} k a^\frac{1}{2}}\widetilde{\tau}$.

By \cite[Theorem V.2.18]{Takesakii} the equality $\|k\widetilde{\tau}\|=\tau(|k|)$ holds for any $k\in\mathfrak{m}_\tau$.
Using Theorem \ref{unbounded_equation} $\|\varphi\|_a=\|a^\frac{1}{2}k \widetilde{\tau} a^\frac{1}{2}\|=\|\overline{a^\frac{1}{2} k a^\frac{1}{2}}\widetilde{\tau}\|=\tau(|\overline{a^\frac{1}{2} k a^\frac{1}{2}}|)$.

\end{proof}

Let $\mathrm{Tr}$ be the canonical trace in the space $B(H)$ of bounded linear operators in Hilbert space $H$ and $C_1(H)$ denote the space 
of trace class operators in $H$.

\begin{corollary}\label{tr|aka|} For any $\varphi=k\widetilde{\mathrm{Tr}}$ in $\mathfrak{D}_a$ ($k\in C_1(H)$) the equality 
$\|\varphi\|_a=\mathrm{Tr}|\overline{a^\frac{1}{2} k a^\frac{1}{2}}|$ holds.
\end{corollary}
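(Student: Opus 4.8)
The plan is to obtain Corollary \ref{tr|aka|} as the special case $\mathcal{M}=B(H)$, $\tau=\mathrm{Tr}$ of Theorem \ref{semifinite}. First I would recall that $B(H)$ is a semifinite von Neumann algebra and that the canonical trace $\mathrm{Tr}$ is a faithful semifinite normal trace on it, so the hypotheses of Theorem \ref{semifinite} are met with this choice. Under this identification, the ideal $\mathfrak{m}_\tau$ is exactly $C_1(H)$, the trace class, and $\widetilde{\tau}=\widetilde{\mathrm{Tr}}$ is the usual linear trace functional on $C_1(H)$; the map $k\mapsto k\widetilde{\mathrm{Tr}}$ is the standard isometric identification of $C_1(H)$ with $B(H)_*$. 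So any $\varphi\in\mathfrak{D}_a$ of the form $k\widetilde{\mathrm{Tr}}$ with $k\in C_1(H)$ is an element of $\mathfrak{D}_a$ in the sense of Theorem \ref{semifinite}.

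The one point that needs a remark rather than a bare citation is the hypothesis ``$\overline{a^\frac{1}{2}ka^\frac{1}{2}}\in\mathcal{M}$'' appearing in Theorem \ref{semifinite}: in the $B(H)$ setting this is automatic. Indeed, since $k\in C_1(H)$ we may write $k=k_1-k_2+ik_3-ik_4$ with $k_j\in C_1(H)^+$, so it suffices to treat $k\geq 0$; then $k^{1/2}\in C_2(H)\subset B(H)$, and for every $f\in D(a^{1/2})$ one has $\|k^{1/2}a^{1/2}f\|^2=\langle a^{1/2}k a^{1/2}f,f\rangle$, but more to the point $\overline{k^{1/2}a^{1/2}}$ is already shown (inside the proof of Theorem \ref{semifinite}, and independently of the hypothesis) to be everywhere-defined and bounded whenever $k^{1/2}a^{1/2}$ is closable with the right bound — and here $a^{1/2}k^{1/2}$ is a restriction of the bounded operator $(k^{1/2}a^{1/2})^*$ is not yet available without the hypothesis. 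The clean route, which I would take, is instead: since $\varphi=k\widetilde{\mathrm{Tr}}\in\mathfrak{D}_a$ means $\varphi(a)=k\widetilde{\mathrm{Tr}}(a)<+\infty$, and for $k\geq 0$ this reads $\lim_\lambda\mathrm{Tr}(k^{1/2}a_\lambda k^{1/2})<+\infty$, i.e. $\overline{k^{1/2}ak^{1/2}}\in C_1(H)\subset B(H)$; then $\overline{k^{1/2}a^{1/2}}=(\,\overline{k^{1/2}ak^{1/2}}\,)^{1/2}w$ for a partial isometry $w$ (polar-type factorization) is bounded, whence $\overline{a^{1/2}ka^{1/2}}=(k^{1/2}a^{1/2})^*\,\overline{k^{1/2}a^{1/2}}$ is a product of bounded operators and so lies in $B(H)$. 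Thus the hypothesis of Theorem \ref{semifinite} holds for \emph{every} $k\in C_1(H)$ with $k\widetilde{\mathrm{Tr}}\in\mathfrak{D}_a$, and no extra condition on $k$ is needed in the statement of the corollary.

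Having verified the hypotheses, I would simply invoke Theorem \ref{semifinite}: it gives $\|\varphi\|_a=\tau(|\overline{a^\frac{1}{2}ka^\frac{1}{2}}|)$, which in the present notation is $\mathrm{Tr}\,|\overline{a^\frac{1}{2}ka^\frac{1}{2}}|$. That completes the proof. The only genuine obstacle is the bookkeeping in the previous paragraph — confirming that the finiteness condition $\varphi\in\mathfrak{D}_a$ by itself forces $\overline{a^{1/2}ka^{1/2}}$ to be bounded in the $B(H)$ case, so that the corollary can be stated without the side hypothesis present in the theorem; everything else is a direct specialization, and I expect the write-up to be only a few lines.
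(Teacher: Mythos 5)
Your proposal is correct and takes essentially the same route as the paper: both verify that the side hypothesis $\overline{a^{\frac{1}{2}}ka^{\frac{1}{2}}}\in\mathcal{M}$ of Theorem \ref{semifinite} is automatic in the $B(H)$ case (reduce to $k\geq 0$, use $k\widetilde{\mathrm{Tr}}(a)<+\infty$ to get $\overline{k^{\frac{1}{2}}ak^{\frac{1}{2}}}\in C_1(H)$, hence $\overline{a^{\frac{1}{2}}k^{\frac{1}{2}}}$ is Hilbert--Schmidt and $\overline{a^{\frac{1}{2}}ka^{\frac{1}{2}}}$ bounded, in fact trace class), and then invoke that theorem. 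The only caution is that the reduction to positive $k$ must use summands $k_j\in C_1^+(H)$ with $k_j\widetilde{\mathrm{Tr}}\in\mathfrak{D}_a^+$, as supplied by the definition $\mathfrak{D}_a=\mathrm{lin}_{\mathbb{C}}\mathfrak{D}_a^+$ rather than an arbitrary decomposition of $k$ in $C_1(H)$ --- which is also how the paper argues.
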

\begin{proof}
Assume $\varphi\in \mathfrak{D}_a^+$ ($k\in C_1^+(H)$). Since $\mathrm{Tr}(\overline{k^\frac{1}{2} a k^\frac{1}{2}})=
\lim\limits_{\lambda\to +\infty} \mathrm{Tr}(k^\frac{1}{2} a_\lambda k^\frac{1}{2})=
\lim\limits_{\lambda\to +\infty} k\widetilde{\mathrm{Tr}}(a_\lambda)=k\widetilde{\mathrm{Tr}}(a)<+\infty$, it follows that
$\overline{k^\frac{1}{2} a k^\frac{1}{2}}\in C_1^+(H)$. Hence, $\overline{a^\frac{1}{2} k^\frac{1}{2}}$ is a Hilbert-Schmidt operator
and $\overline{a^\frac{1}{2} k a^\frac{1}{2}}\in C_1^+(H)$.

If $\varphi\in \mathfrak{D}_a$ ($k\in C_1(H)$), then there exist $k_1,\ k_2,\ k_3,\ k_4 \in C_1^+(H)$, such that
$k\widetilde{\mathrm{Tr}}=k_1\widetilde{\mathrm{Tr}}-k_2\widetilde{\mathrm{Tr}}+ik_3\widetilde{\mathrm{Tr}}-ik_4\widetilde{\mathrm{Tr}}$, and 
$\overline{a^\frac{1}{2}k_1 a^\frac{1}{2}}-\overline{a^\frac{1}{2} k_2 a^\frac{1}{2}}+i\overline{a^\frac{1}{2} k_3 a^\frac{1}{2}}-i\overline{a^\frac{1}{2} k_4 a^\frac{1}{2}}=
\overline{a^\frac{1}{2}ka^\frac{1}{2}}\in C_1(H)$.

\end{proof}

The latter Theorem and Corollary make it posible to define $\|\cdot\|_a$ as the mapping
$k\widetilde{\tau}\in\mathfrak{D}_a^h \mapsto \tau(|a^\frac{1}{2}k a^\frac{1}{2}|)$ for the case of
a semifinite $\mathcal{M}$ with a faithfull normal trace $\tau$. This result is similar to the result
of \cite{LugShe84}.

\section{Embedding of Normal Weights into $L_1^+(a)$ and Generation of $L_1(a)$}

Since our approach is influenced by the theory of noncommutative integration with respect to a weight,
one of the natural questions is how our $L_1$-spaces are related to the weights on $\mathcal{M}$.
In this section we show that all semifinite weights, for which $\varphi(a)< +\infty$, can be embedded
into $L_1^+(a)$. Moreover, every element of $L_1^h(a)$ can be represented as the difference of two elements
corresponding to embeddings of semifinite normal weights.

\begin{definition}
 We write $[\widehat{a^\frac{1}{2}x a^\frac{1}{2}}]\in L_\infty^+(a)$ and call
 $[\widehat{a^\frac{1}{2}x a^\frac{1}{2}}]\in L_\infty(a)$ positive if and only if
 $\widehat{a^\frac{1}{2}x a^\frac{1}{2}}\geq \widehat{a^\frac{1}{2}\mathbf{0} a^\frac{1}{2}}$ (equivalently $x_q\geq \mathbf{0}$).
\end{definition}

\begin{definition}
 We write $\varphi\in  (L_\infty^*(a))^+$ and call $\varphi \in L_\infty^*(a)$ positive
 if and only if $\varphi([\widehat{a^\frac{1}{2}x a^\frac{1}{2}}])\geq 0 $ for all $[\widehat{a^\frac{1}{2}x a^\frac{1}{2}}]\in L_\infty^+(a)$.
\end{definition}

 For an injective operator $a$ we identify elements of $L_1(a)$ with the corresponding elements of $L_\infty(a)$.
 By $L_1^+(a)$ we denote the intersection of $(L_\infty^*(a))^+$ with $L_1(a)$.
 
\begin{lemma}\label{pos <-> pos}
 $\varphi\in  (L_\infty^*(a))^+$ if and only if $a^\frac{1}{2}\varphi a^\frac{1}{2} \in (\mathcal{M}^*)^+$.
\end{lemma}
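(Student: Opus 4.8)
The plan is to unwind the definitions on both sides and reduce the statement to the already-known correspondence between $L_\infty(a)$ and $\mathcal{M}_q$ (equivalently $\mathcal{M}$ when $a$ is injective), together with the basic fact that a normal functional is positive iff it is nonnegative on positive elements of the algebra. Concretely, by Definition~\ref{noninj_a_aqpa_def} we have $a^\frac{1}{2}\varphi a^\frac{1}{2}(x) = \varphi([\widehat{a^\frac{1}{2}xa^\frac{1}{2}}])$ for every $x \in \mathcal{M}$, so the two positivity conditions are tied together by the question of which elements $x \in \mathcal{M}$ give rise to positive classes $[\widehat{a^\frac{1}{2}xa^\frac{1}{2}}] \in L_\infty^+(a)$.

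First I would record the two implications separately. For the ``only if'' direction: assume $\varphi \in (L_\infty^*(a))^+$ and take an arbitrary $x \in \mathcal{M}^+$. Then $x_q = qxq|_{qH} \geq \mathbf{0}$, so by the definition of $L_\infty^+(a)$ the class $[\widehat{a^\frac{1}{2}xa^\frac{1}{2}}]$ is positive, hence $a^\frac{1}{2}\varphi a^\frac{1}{2}(x) = \varphi([\widehat{a^\frac{1}{2}xa^\frac{1}{2}}]) \geq 0$; since $x \in \mathcal{M}^+$ was arbitrary and $a^\frac{1}{2}\varphi a^\frac{1}{2} \in \mathcal{M}^*$ (from Definition~\ref{noninj_a_aqpa_def} it is a bounded linear functional, and one checks it lies in $(\mathcal{M}^*)^h$), this gives $a^\frac{1}{2}\varphi a^\frac{1}{2} \in (\mathcal{M}^*)^+$. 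For the ``if'' direction: assume $a^\frac{1}{2}\varphi a^\frac{1}{2} \in (\mathcal{M}^*)^+$ and take $[\widehat{a^\frac{1}{2}xa^\frac{1}{2}}] \in L_\infty^+(a)$, i.e. $x_q \geq \mathbf{0}$. The point is that $a^\frac{1}{2}\varphi a^\frac{1}{2}$ only sees $x$ through $x_q$: the note in Definition~\ref{noninj_a_aqpa_def} says $a^\frac{1}{2}\varphi a^\frac{1}{2}(x) = a^\frac{1}{2}_q \widehat{\varphi} a^\frac{1}{2}_q(x_q)$, where $\widehat{\varphi}$ is the corresponding functional in $\mathcal{S}_{a_q}^*(\mathcal{M}_q)$ and $a_q$ is injective. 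So I would replace $x$ by a genuinely positive operator in $\mathcal{M}_q$, namely $x_q$, and use that for the injective operator $a_q$ the correspondence of Corollary~\ref{L_inf->M} (the map $U^t$) sends positive functionals to positive functionals, which in turn follows from the injective case of this very lemma — or more directly, approximate $x_q$ by the positive operators $(x_q)_n := q_n x_q q_n$ from Lemma~\ref{density}-type reasoning and pass to the limit, noting $a^\frac{1}{2}_q\widehat{\varphi}a^\frac{1}{2}_q$ is normal. Either route yields $a^\frac{1}{2}\varphi a^\frac{1}{2}(x) = a^\frac{1}{2}_q\widehat{\varphi}a^\frac{1}{2}_q(x_q) \geq 0$, hence $\varphi([\widehat{a^\frac{1}{2}xa^\frac{1}{2}}]) \geq 0$ and $\varphi \in (L_\infty^*(a))^+$.

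The main obstacle, and the place requiring the most care, is the ``if'' direction in the noninjective case: one must be sure that the positivity of the normal functional $a^\frac{1}{2}\varphi a^\frac{1}{2}$ on all of $\mathcal{M}^+$ really does force $\widehat{\varphi}$ to be positive on $L_\infty^+(a_q) = \mathcal{S}_{a_q}(\mathcal{M}_q^{sa})^+$, which is a statement purely about the injective operator $a_q$ and the algebra $\mathcal{M}_q$. This is essentially the injective case of the lemma, so I would either prove the injective case first (where $q = \mathbf{1}$, $x_q = x$, and the argument is the clean two-line version above plus the remark that $a^\frac{1}{2}\varphi a^\frac{1}{2}$ being positive on $\mathcal{M}^+$ is the definition of lying in $(\mathcal{M}^*)^+$) and then bootstrap to the general case via Theorem~\ref{la=lap} and Definition~\ref{noninj_a_aqpa_def}, or handle both at once by systematically working in $\mathcal{M}_q$ from the start. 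In either case the only non-formal ingredient is the standard fact that a bounded (indeed normal) linear functional on a von Neumann algebra is positive precisely when it is nonnegative on the positive cone, which I would invoke without proof.
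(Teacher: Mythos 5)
Your ``only if'' direction is exactly the paper's argument: for $x\in\mathcal{M}^+$ one has $x_q\geq\mathbf{0}$, hence $[\widehat{a^\frac{1}{2}xa^\frac{1}{2}}]\in L_\infty^+(a)$ and $a^\frac{1}{2}\varphi a^\frac{1}{2}(x)=\varphi([\widehat{a^\frac{1}{2}xa^\frac{1}{2}}])\geq 0$. The ``if'' direction, however, is not closed by either of the two routes you offer. You correctly reduce it to showing $a_q^\frac{1}{2}\widehat{\varphi}a_q^\frac{1}{2}(x_q)\geq 0$ whenever $x_q\geq\mathbf{0}$, but the hypothesis is positivity of $a^\frac{1}{2}\varphi a^\frac{1}{2}$ on $\mathcal{M}^+$ (the big algebra), and you never bridge the two. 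Your first route --- invoke the injective case of the lemma for $a_q$ together with Corollary \ref{L_inf->M} --- is circular: to apply the injective case to $a_q$ you must already know either that $\widehat{\varphi}$ is nonnegative on the positive forms over $a_q$, which is precisely the statement being proved, or that $a_q^\frac{1}{2}\widehat{\varphi}a_q^\frac{1}{2}$ is nonnegative on $\mathcal{M}_q^+$, which is exactly the bridge you have not built. Your second route fails for a different reason: $\varphi$ is an arbitrary element of $L_\infty^*(a)$, so $a^\frac{1}{2}\varphi a^\frac{1}{2}$ (and likewise $a_q^\frac{1}{2}\widehat{\varphi}a_q^\frac{1}{2}$) is merely a bounded functional in $\mathcal{M}^*$ --- the lemma deliberately says $(\mathcal{M}^*)^+$, not $\mathcal{M}_*^+$ --- so ``pass to the limit, noting it is normal'' is not available; and in any case approximating $x_q$ by $q_nx_qq_n$ inside $\mathcal{M}_q$ does not relate the value at $x_q$ to any value of the functional on $\mathcal{M}^+$.

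The missing step is a one-line lifting argument, and it is how the paper finishes: since $x_q\in\mathcal{M}_q^+$, the operator $x'=x_q\oplus 0_{1-q}\in\mathcal{M}^+$ satisfies $x'_q=x_q$, so the forms $\widehat{a^\frac{1}{2}x'a^\frac{1}{2}}$ and $\widehat{a^\frac{1}{2}xa^\frac{1}{2}}$ agree on $D(a^\frac{1}{2})\times D(a^\frac{1}{2})$ and $[\widehat{a^\frac{1}{2}x'a^\frac{1}{2}}]=[\widehat{a^\frac{1}{2}xa^\frac{1}{2}}]$; hence $\varphi([\widehat{a^\frac{1}{2}xa^\frac{1}{2}}])=a^\frac{1}{2}\varphi a^\frac{1}{2}(x')\geq 0$ directly from the hypothesis. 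Equivalently: every element of $\mathcal{M}_q^+$ is of the form $y_q$ with $y\in\mathcal{M}^+$, so positivity on $\mathcal{M}^+$ does transfer to $\mathcal{M}_q^+$. With this lifting inserted, your argument coincides with the paper's proof; without it, the ``if'' direction is not established.
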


\begin{proof}
 Note that $[\widehat{a^\frac{1}{2} x a^\frac{1}{2}}]\in L_\infty^+(a)$ if and only if $x_q\geq 0$.
 Indeed, if $\widehat{a^\frac{1}{2} x a^\frac{1}{2}}\geq \widehat{a^\frac{1}{2} \mathbf{0} a^\frac{1}{2}}$,
 then $\langle x a^\frac{1}{2} f, x a^\frac{1}{2} f\rangle \geq 0$ for all $f\in D(a^\frac{1}{2})$;
 which implies $\langle x_q f, f\rangle\geq 0$ for all $f\in qH$, since $\mathrm{Im}(a^\frac{1}{2})$ is dense in $qH$.
 Also, if $x_q\geq 0 $, then $\langle x a^\frac{1}{2} f, a^\frac{1}{2} f\rangle = \langle x_q a^\frac{1}{2}_q qf, a^\frac{1}{2}_q qf\rangle\geq 0$.
 
 Assume $\varphi\in  (L_\infty^*(a))^+$. Note that if $x\in \mathcal{M}^+$ then $x_q\in \mathcal{M}_q^+$, hence $[\widehat{a^\frac{1}{2}x a^\frac{1}{2}}]\in L_\infty^+(a)$.
 Using the equality $a^\frac{1}{2}\varphi a^\frac{1}{2}(x)\equiv \varphi([a^\frac{1}{2} x a^\frac{1}{2}])$
 we deduce, that for all $x\in \mathcal{M}^+$ the inequality $a^\frac{1}{2}\varphi a^\frac{1}{2}(x)\geq 0$ holds.
 
 Assume  $a^\frac{1}{2}\varphi a^\frac{1}{2} \in (\mathcal{M}^*)^+$. If $[\widehat{a^\frac{1}{2} x a^\frac{1}{2}}]\in L_\infty^+(a)$,
 then $x_q\in \mathcal{M}_q^+$ and there exists $x'=x_q\oplus 0_{1-q} \in \mathcal{M}^+$, such that
 $[\widehat{a^\frac{1}{2} x a^\frac{1}{2}}]=[\widehat{a^\frac{1}{2} x' a^\frac{1}{2}}]$. Hence,
 $\varphi([\widehat{a^\frac{1}{2} x a^\frac{1}{2}}])=a^\frac{1}{2}\varphi a^\frac{1}{2}(x')\geq 0$ for all 
 $[\widehat{a^\frac{1}{2} x a^\frac{1}{2}}]\in L_\infty^+(a)$.
 
\end{proof}

\begin{corollary}\label{f(1) analoge}
 Let $\varphi \in L_\infty^*(a)$. $\varphi$ is positive if and only if the equality $\|\varphi\|_a=\varphi(a)$ holds.
\end{corollary}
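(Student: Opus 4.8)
The plan is to deduce this from the classical fact that a bounded linear functional $\psi$ on the unital C*-algebra $\mathcal{M}$ is positive if and only if $\|\psi\|=\psi(\mathbf{1})$ (see e.g. \cite{Takesakii}), applied to $\psi=a^\frac{1}{2}\varphi a^\frac{1}{2}\in\mathcal{M}^*$. The two bridges to that fact are the isometry $\|\varphi\|_a=\|a^\frac{1}{2}\varphi a^\frac{1}{2}\|$ from Theorem \ref{most_wide_extention} and the identity $\varphi(a)=a^\frac{1}{2}\varphi a^\frac{1}{2}(\mathbf{1})$, which I establish first.

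To make $\varphi(a)$ precise, note that the element $a$ of $L_\infty^{sa}(a)$ is the class $[\widehat{a^\frac{1}{2}\mathbf{1}a^\frac{1}{2}}]$: for $\psi\in\mathfrak{D}_a^+$ with $\psi=\sum_i\langle\cdot f_i,f_i\rangle$ the Corollary to Theorem \ref{definition_afa} gives $\widehat{a^\frac{1}{2}\mathbf{1}a^\frac{1}{2}}(\psi)=a^\frac{1}{2}\psi a^\frac{1}{2}(\mathbf{1})=\sum_i\|a^\frac{1}{2}f_i\|^2=\psi(a)$, and linear extension shows $[\widehat{a^\frac{1}{2}\mathbf{1}a^\frac{1}{2}}]$ is exactly the functional $\psi\mapsto\psi(a)$ on $\mathfrak{D}_a^h$. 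Hence, by Definition \ref{noninj_a_aqpa_def} with $x=\mathbf{1}$, $\varphi(a)=\varphi([\widehat{a^\frac{1}{2}\mathbf{1}a^\frac{1}{2}}])=a^\frac{1}{2}\varphi a^\frac{1}{2}(\mathbf{1})$ for every $\varphi\in L_\infty^*(a)$.

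The equivalence is then a chain: $\varphi\in(L_\infty^*(a))^+$ iff $a^\frac{1}{2}\varphi a^\frac{1}{2}\in(\mathcal{M}^*)^+$ by Lemma \ref{pos <-> pos}; iff $\|a^\frac{1}{2}\varphi a^\frac{1}{2}\|=a^\frac{1}{2}\varphi a^\frac{1}{2}(\mathbf{1})$ by the classical fact above; iff $\|\varphi\|_a=\varphi(a)$ after substituting Theorem \ref{most_wide_extention} and the identity just proved. This finishes the argument.

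The one substantive ingredient is the classical C*-algebra fact, which I would cite rather than reprove; everything else is bookkeeping through the isometric identifications set up earlier. For completeness I note that the direction ``positive $\Rightarrow\|\varphi\|_a=\varphi(a)$'' is elementary on its own: if $x\in\mathcal{M}^{sa}$, $\|x\|\le1$, then $-[\widehat{a^\frac{1}{2}\mathbf{1}a^\frac{1}{2}}]\le[\widehat{a^\frac{1}{2}x a^\frac{1}{2}}]\le[\widehat{a^\frac{1}{2}\mathbf{1}a^\frac{1}{2}}]$ in $L_\infty^{sa}(a)$, so $|\varphi([\widehat{a^\frac{1}{2}x a^\frac{1}{2}}])|\le\varphi(a)$ and hence $\|\varphi\|_a\le\varphi(a)\le\|\varphi\|_a\,\|[\widehat{a^\frac{1}{2}\mathbf{1}a^\frac{1}{2}}]\|_a=\|\varphi\|_a$; it is the reverse implication that genuinely needs the (GNS / Cauchy--Schwarz type) content of the classical fact, and that is the main obstacle one cannot bypass.
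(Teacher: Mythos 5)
Your proposal is correct and follows essentially the same route as the paper: both reduce the statement to Lemma \ref{pos <-> pos} and Theorem \ref{most_wide_extention} together with the classical fact that a bounded functional $\psi$ on a unital C*-algebra is positive iff $\|\psi\|=\psi(\mathbf{1})$, applied to $\psi=a^\frac{1}{2}\varphi a^\frac{1}{2}$. Your explicit verification that $\varphi(a)=\varphi([\widehat{a^\frac{1}{2}\mathbf{1}a^\frac{1}{2}}])=a^\frac{1}{2}\varphi a^\frac{1}{2}(\mathbf{1})$ is a useful bit of bookkeeping the paper leaves implicit, but it does not change the argument.
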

\begin{proof}
 If $\varphi \in L_\infty^+(a)$, then $a^\frac{1}{2} \varphi a^\frac{1}{2} \in (\mathcal{M}^*)^+$ and 
 according to Theorem \ref{most_wide_extention} $\|\varphi\|_a=\|a^\frac{1}{2} \varphi a^\frac{1}{2}\|$.
 Hence, $\|\varphi\|_a=a^\frac{1}{2} \varphi a^\frac{1}{2}(\mathbf{1})=\varphi(a)$.
 
 Conversely, if $\|\varphi\|_a=\varphi(a)$ then $\|a^\frac{1}{2} \varphi a^\frac{1}{2}\|_a=a^\frac{1}{2} \varphi a^\frac{1}{2}(\mathbf{1})$,
 hence $a^\frac{1}{2} \varphi a^\frac{1}{2}\in (\mathcal{M}^*)^+$ and $\varphi \in (L_\infty^*(a))^+$ by Lemma \ref{pos <-> pos}.
 
\end{proof}

\begin{corollary}
 For an injective operator $a$ the isometrical isomorphisms $U, U^t$ and $V$ from Corollary \ref{L_inf->M} preserve the order.
 Moreover, $U(\mathcal{M}^+)=L_\infty^+(a)$, $U^t((L_\infty^*(a))^+)=\mathcal{M}^{*+}$, $V(L_1^+(a))=\mathcal{M}_*^+$.
\end{corollary}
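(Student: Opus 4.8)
The plan is to note first that for a linear bijection between ordered vector spaces, being an order isomorphism (order preserving in both directions) is equivalent to carrying the positive cone exactly onto the positive cone; hence the three ``moreover'' equalities are precisely the content of the claim that $U$, $U^t$, $V$ preserve the order, and it suffices to prove those three equalities. I would establish them in the order $U$, then $U^t$, then $V$, each step feeding the next.

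First, for $U$: since $a$ is injective, $\ker a=\{0\}$, so the projection $q$ entering the definition of positivity on $L_\infty(a)$ equals $\mathbf{1}$ and $x_q=x$. Therefore $U(x)=\widehat{a^\frac{1}{2}xa^\frac{1}{2}}$ lies in $L_\infty^+(a)$ iff $\langle x a^\frac{1}{2}f,a^\frac{1}{2}f\rangle\ge 0$ for all $f\in D(a^\frac{1}{2})$, i.e.\ iff $\langle xg,g\rangle\ge 0$ for all $g\in\mathrm{Im}(a^\frac{1}{2})$; as $\mathrm{Im}(a^\frac{1}{2})$ is dense in $H$ (its closure is $(\ker a^\frac{1}{2})^\perp=(\ker a)^\perp=H$) and $x$ is bounded, this amounts to $x\ge \mathbf{0}$. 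Hence $U(\mathcal{M}^+)=L_\infty^+(a)$ and $U$ is an order isomorphism.

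Next, for $U^t$: by Lemma \ref{pos <-> pos} (in the injective case $a^\frac{1}{2}\varphi a^\frac{1}{2}=U^t(\varphi)$), a functional $\varphi\in L_\infty^*(a)$ is positive iff $U^t(\varphi)$ is positive on $\mathcal{M}$; equivalently this follows from $U(\mathcal{M}^+)=L_\infty^+(a)$ together with $U^t(\varphi)(x)=\varphi(U(x))$. Since $U^t$ is bijective, $U^t((L_\infty^*(a))^+)=\mathcal{M}^{*+}$, so $U^t$ is an order isomorphism. Finally, $V=U^t|_{L_1(a)}$ preserves the order as a restriction of $U^t$, and from $L_1^+(a)=(L_\infty^*(a))^+\cap L_1(a)$ and injectivity of $U^t$,
\[
V(L_1^+(a))=U^t\big((L_\infty^*(a))^+\big)\cap U^t(L_1(a))=\mathcal{M}^{*+}\cap\mathcal{M}_*=\mathcal{M}_*^+ ,
\]
where the last equality is the standard fact that a normal functional on $\mathcal{M}$ is positive exactly when it belongs to $\mathcal{M}_*^+$. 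This finishes the argument.

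I do not expect a genuine obstacle: everything is bookkeeping with the already-constructed bijections $U$, $U^t$, $V$, together with two elementary facts — the density of $\mathrm{Im}(a^\frac{1}{2})$ in $H$ for injective $a$ (so that positivity of a bounded operator tested on this dense set extends to all of $H$), and the identity $\mathcal{M}^{*+}\cap\mathcal{M}_*=\mathcal{M}_*^+$. The only point deserving mild care is matching the positive cones of $L_\infty(a)$ and $L_\infty^*(a)$ to the right objects through the identifications of the previous section, but in the injective case those identifications are transparent because the relevant quotient is trivial.
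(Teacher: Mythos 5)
Your proof is correct and follows the same route the paper intends for this (unproved) corollary: positivity in $L_\infty(a)$ reduces to $x\geq\mathbf{0}$ when $a$ is injective (density of $\mathrm{Im}(a^{\frac{1}{2}})$), Lemma \ref{pos <-> pos} gives the statement for $U^t$, and $V$ inherits it by restriction together with $\mathcal{M}^{*+}\cap\mathcal{M}_*=\mathcal{M}_*^+$. No gaps; the cone equalities indeed encode order preservation in both directions for these bijections.
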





\begin{theorem}\label{posititvity}
For an injective operator $a$ the set $\mathfrak{D}_a^+$ is the dense subset of $L_1^+(a)$.
\end{theorem}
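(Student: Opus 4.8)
The plan is to transport the statement along the isometric order isomorphism $V\colon L_1(a)\to\mathcal M_*$ of Corollary~\ref{L_inf->M}, for which $V(L_1^+(a))=\mathcal M_*^+$. For $\varphi\in\mathfrak D_a^h$ one has $V(\varphi)=a^{\frac12}\varphi a^{\frac12}$, the normal functional of Theorem~\ref{definition_afa}; when $\varphi\in\mathfrak D_a^+$ this functional is positive, so $\varphi=V^{-1}(a^{\frac12}\varphi a^{\frac12})\in V^{-1}(\mathcal M_*^+)=L_1^+(a)$, i.e.\ $\mathfrak D_a^+\subseteq L_1^+(a)$. Since $V$ is an isometry, the assertion is equivalent to: the set $\{a^{\frac12}\varphi a^{\frac12}\mid\varphi\in\mathfrak D_a^+\}$ is $\|\cdot\|_{\mathcal M_*}$-dense in $\mathcal M_*^+$. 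So fix $\chi\in\mathcal M_*^+$; I shall produce $\varphi_n\in\mathfrak D_a^+$ with $a^{\frac12}\varphi_n a^{\frac12}\to\chi$.

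Set $b_n:=(\tfrac1n+a^{\frac12})^{-1}\in\mathcal M^+$ and $q_n:=a^{\frac12}b_n$, so $\|q_n\|\le1$ and $q_n\nearrow\mathbf 1$ strongly because $a$ is injective (as in the proof of Lemma~\ref{density}). Put $\varphi_n:=b_n\chi b_n\in\mathcal M_*^+$. Since $b_n$ commutes with every $a_\lambda$, one has $\varphi_n(a)=\lim_\lambda\chi(b_n^2 a_\lambda)=\chi(q_n^2)<+\infty$ (using $b_n^2 a_\lambda\nearrow b_n^2 a=q_n^2\in\mathcal M$), hence $\varphi_n\in\mathfrak D_a^+$. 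By the same limit interchange as in Theorem~\ref{definition_afa} — $a_\lambda^{\frac12}b_n\to q_n$ strongly with $\|a_\lambda^{\frac12}b_n\|\le1$, and $\chi$ is normal — one gets, for every $x\in\mathcal M$,
\[
a^{\frac12}\varphi_n a^{\frac12}(x)=\lim_\lambda\chi\big((a_\lambda^{\frac12}b_n)\,x\,(a_\lambda^{\frac12}b_n)\big)=\chi(q_n x q_n),
\]
that is, $a^{\frac12}\varphi_n a^{\frac12}=q_n\chi q_n$.

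It remains to prove $\|q_n\chi q_n-\chi\|_{\mathcal M_*}\to0$. Write $\chi=\sum_i\langle\cdot\,\xi_i,\xi_i\rangle$ with $\sum_i\|\xi_i\|^2=\|\chi\|$; then $q_n\chi q_n=\sum_i\langle\cdot\,q_n\xi_i,q_n\xi_i\rangle$, and from the elementary estimate $\|\langle\cdot\,\xi,\xi\rangle-\langle\cdot\,\eta,\eta\rangle\|_{\mathcal M_*}\le\|\xi-\eta\|(\|\xi\|+\|\eta\|)$ (applied with $\eta=q_n\xi_i$) together with the Cauchy--Schwarz inequality one obtains $\|q_n\chi q_n-\chi\|_{\mathcal M_*}\le2\|\chi\|^{1/2}\big(\sum_i\|(\mathbf 1-q_n)\xi_i\|^2\big)^{1/2}$. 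Because $0\le\mathbf 1-q_n\le\mathbf 1$ and $(\mathbf 1-q_n)\xi_i\to0$ for each $i$, dominated convergence for the series forces $\sum_i\|(\mathbf 1-q_n)\xi_i\|^2\to0$. Applying $V^{-1}$ we get $\varphi_n\to V^{-1}(\chi)$ in $L_1(a)$; since $V(L_1^+(a))=\mathcal M_*^+$, every element of $L_1^+(a)$ is of this form, which completes the proof.

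The main obstacle is this last paragraph: the pointwise (weak$^*$) convergence $q_n\chi q_n\to\chi$ is automatic once $q_n\nearrow\mathbf 1$, but density in the $a$-norm requires the stated norm estimate, and there it is crucial that $\chi$ be \emph{positive}, since only then is there a representation $\chi=\sum_i\langle\cdot\,\xi_i,\xi_i\rangle$ with $\sum_i\|\xi_i\|^2$ controlled. An estimate-free alternative runs by contradiction: if $\mathfrak D_a^+$ were not dense in $L_1^+(a)$, Hahn--Banach separation of some $\psi_0\in L_1^+(a)$ from the closed convex cone $\overline{\mathfrak D_a^+}\subseteq L_1^h(a)$ would produce, via Theorem~\ref{linf<->sql}, a form $\widehat{a^{\frac12}x a^{\frac12}}\in L_\infty^{sa}(a)$ with $\langle x a^{\frac12}f,a^{\frac12}f\rangle=\widehat{a^{\frac12}x a^{\frac12}}(f,f)\le0$ for all $f\in D(a^{\frac12})$ (test against $\langle\cdot\,f,f\rangle\in\mathfrak D_a^+$) and $\psi_0(\widehat{a^{\frac12}x a^{\frac12}})>0$; density of $\mathrm{Im}(a^{\frac12})$ then forces $x\le0$, so $\widehat{a^{\frac12}(-x)a^{\frac12}}\in L_\infty^+(a)$ and hence $\psi_0(\widehat{a^{\frac12}x a^{\frac12}})\le0$ — a contradiction.
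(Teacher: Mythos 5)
Your argument is correct, but it follows a genuinely different route from the paper. The paper's proof never leaves $L_1^h(a)$: given $\varphi\in L_1^+(a)$ it takes an arbitrary approximating sequence $\varphi_n\in\mathfrak D_a^h$ (available by definition of the completion), chooses near-optimal decompositions $\varphi_n=\varphi_n^1-\varphi_n^2$ with $\varphi_n^1(a)+\varphi_n^2(a)\leq\|\varphi_n\|_a+2^{-n}$, and then uses Corollary~\ref{f(1) analoge} ($\|\psi\|_a=\psi(a)$ for positive $\psi$) to conclude $\|\varphi_n^2\|_a=\varphi_n^2(a)\to 0$, so the positive parts $\varphi_n^1\in\mathfrak D_a^+$ already converge to $\varphi$; this is a short, soft argument showing that \emph{any} decomposition-based approximation works. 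You instead transport the statement through the isometric order isomorphism $V$ onto $\mathcal M_*^+$ (using the stated corollary $V(L_1^+(a))=\mathcal M_*^+$, which is established before and independently of this theorem, so there is no circularity) and build explicit approximants: $\varphi_n=b_n\chi b_n$ with $b_n=(\tfrac1n+a^{\frac12})^{-1}$, the identification $a^{\frac12}\varphi_n a^{\frac12}=q_n\chi q_n$ via the same bounded-strong-convergence interchange as in Theorem~\ref{definition_afa}, and a quantitative estimate $\|q_n\chi q_n-\chi\|\leq 2\|\chi\|^{1/2}\bigl(\sum_i\|(\mathbf 1-q_n)\xi_i\|^2\bigr)^{1/2}$, where positivity of $\chi$ is indeed what supplies the controlled representation $\chi=\sum_i\langle\cdot\,\xi_i,\xi_i\rangle$. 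Your construction mirrors the paper's earlier proof that $v(\mathfrak D_a^h)$ is dense in $\mathcal M_*^h$, except that you replace the weak-density-plus-Rudin step by a direct norm estimate, which is the right move since you must stay inside the positive cone; what it buys is an explicit approximating sequence (compressions of the target functional) rather than an abstract existence statement, at the cost of more computation and of invoking the order-isomorphism corollary. Your closing Hahn--Banach separation sketch is also sound (separating a point of $L_1^+(a)$ from the closed cone $\overline{\mathfrak D_a^+}$ yields, via Theorem~\ref{linf<->sql}, a form $\widehat{a^{\frac12}xa^{\frac12}}$ that is nonpositive on all $\langle\cdot f,f\rangle$, hence $x\leq 0$ by density of $\mathrm{Im}(a^{\frac12})$, contradicting positivity of the separated point), giving yet a third, estimate-free proof; the paper's own argument is closest in spirit to this soft style but is even more elementary, using only the identity $\|\psi\|_a=\psi(a)$ on the positive cone.
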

\begin{proof}
Let $\varphi \in L_1^+(a)$. Evidently, $L_1^+(a)\subset L_1^h(a)$ and there exists the sequence $\{\varphi_n\}\subset \mathfrak{D}_a^h$, such that 
$\|\varphi_n -\varphi\|_a \leq \frac{1}{2^n}$. Since $\widehat{a^\frac{1}{2}\mathbf{1} a^\frac{1}{2}}\in L_\infty(a) (\cong L_1^*(a))$,
it follows $\varphi_n(\widehat{a^\frac{1}{2}\mathbf{1} a^\frac{1}{2}}) \to \varphi(\widehat{a^\frac{1}{2}\mathbf{1} a^\frac{1}{2}})=\varphi(a)$.
Let $\varphi_n^1, \varphi_n^2 \in \mathfrak{D}_a^+$ be such that $\varphi_n=\varphi_n^1-\varphi_n^2$
and $\varphi_n^1(a)+\varphi_n^2(a)\leq\|\varphi_n\|_a + \frac{1}{2^n}$. Since $\|\varphi_n-\varphi\|_a\leq \frac{1}{2^n}$,
it follows that $|\varphi_n^1(a)+\varphi_n^2(a)-\|\varphi\|_a| \leq \frac{1}{2^{n-1}}$. Therefore,
$\varphi_n(\widehat{a^\frac{1}{2}\mathbf{1} a^\frac{1}{2}}) + 2\varphi_n^2(a)=\varphi_n^1(a)+\varphi_n^2(a) \to \|\varphi\|_a=\varphi(a)$.
Evidently, $\varphi_n^1(\widehat{a^\frac{1}{2}\mathbf{1} a^\frac{1}{2}})-\varphi_n^2(\widehat{a^\frac{1}{2}\mathbf{1} a^\frac{1}{2}})=
\varphi_n^1(a)-\varphi_n^2(a) \to \varphi(a)$. Hence, $\|\varphi_n^2\|_a=\varphi_n^2(a) \to 0$, which implies $\|\varphi_n^1 - \varphi\|_a\to 0$.

\end{proof}

Let $\Phi$ be a weight on $\mathcal{M}^+$. It is natural to assume that the embedding $\varphi$ of 
$\Phi$ into $L_1(a)$ must be positive, therefore $\|\varphi\|_a=\Phi(a)<+\infty$ by Corollary \ref{f(1) analoge}.

\begin{theorem}\label{weight_embedding}
For an injective operator $a$ any normal weight $\Phi$, such that $\sup\limits_{\lambda\in(0,+\infty)}\Phi(a_\lambda)\equiv\Phi(a)<+\infty$,
defines element of $L_1^+(a)$.
\end{theorem}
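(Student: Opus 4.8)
The plan is to realise $\Phi$ as a sum of vector functionals, convert the condition $\Phi(a)<+\infty$ into a summability statement, and then transport the resulting normal positive functional back to $L_1(a)$ through the isomorphism $V$ of Corollary~\ref{L_inf->M}.

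First I would invoke Haagerup's structure theorem for normal weights \cite{Haa75}: since $\Phi$ is normal on $\mathcal{M}\subseteq B(H)$, there is a family $(\xi_i)_{i\in I}$ in $H$ with $\Phi(x)=\sum_{i\in I}\langle x\xi_i,\xi_i\rangle$ for all $x\in\mathcal{M}^+$. Evaluating on $a_\lambda$ and letting $\lambda\to+\infty$ (all quantities increasing, $a_\lambda\nearrow a$, and $a_\lambda^{1/2}f\to a^{1/2}f$ on $D(a^{1/2})$) yields $\Phi(a)=\sum_{i\in I}\|a^{1/2}\xi_i\|^2$, a term being $+\infty$ when $\xi_i\notin D(a^{1/2})$. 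Hence the hypothesis $\Phi(a)<+\infty$ forces $\xi_i\in D(a^{1/2})$ for every $i$ and $\sum_{i\in I}\|a^{1/2}\xi_i\|^2=\Phi(a)<+\infty$.

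Next I would set $\psi:=\sum_{i\in I}\langle\,\cdot\,a^{1/2}\xi_i,a^{1/2}\xi_i\rangle$; since each summand is a positive functional of norm $\|a^{1/2}\xi_i\|^2$ and these are summable, the series converges absolutely in $\mathcal{M}_*$, so $\psi\in\mathcal{M}_*^+$ with $\|\psi\|=\Phi(a)$. The map $V$ of Corollary~\ref{L_inf->M} sends $L_1(a)$ onto $\mathcal{M}_*$ and, being order preserving, restricts to a bijection $L_1^+(a)\to\mathcal{M}_*^+$; thus $\varphi:=V^{-1}(\psi)$ is an element of $L_1^+(a)$ with $a^{1/2}\varphi a^{1/2}=\psi$, and $\|\varphi\|_a=\|a^{1/2}\varphi a^{1/2}\|=\Phi(a)$ by Theorem~\ref{most_wide_extention}, consistent with the remark preceding the statement.

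It remains — and this is the main point — to check that $\varphi$ genuinely \emph{defines} an embedding of $\Phi$, i.e.\ that it does not depend on the chosen family $(\xi_i)$. For $x\in\mathcal{M}^+$ one has
\[
\varphi(\widehat{a^{1/2}xa^{1/2}})=a^{1/2}\varphi a^{1/2}(x)=\psi(x)=\sum_{i\in I}\langle xa^{1/2}\xi_i,a^{1/2}\xi_i\rangle,
\]
and I would show $\psi(x)=\lim_{\lambda\to+\infty}\Phi(a_\lambda^{1/2}xa_\lambda^{1/2})$ by dominated convergence: each summand $\langle xa_\lambda^{1/2}\xi_i,a_\lambda^{1/2}\xi_i\rangle$ tends to $\langle xa^{1/2}\xi_i,a^{1/2}\xi_i\rangle$ and is dominated by $\|x\|\,\|a^{1/2}\xi_i\|^2$ since $a_\lambda\le a$. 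The right-hand side is built only from $\Phi$ and $a$, and since $\psi$ is normal it is determined by its values on $\mathcal{M}^+$; hence $\varphi$ depends only on $\Phi$. (When $\overline{a^{1/2}xa^{1/2}}\in\mathcal{M}$, the manipulations in the proof of Theorem~\ref{semifinite} further identify $\varphi(\widehat{a^{1/2}xa^{1/2}})$ with $\Phi(\overline{a^{1/2}xa^{1/2}})$.) The delicate step is exactly this $\lambda$-limit: because $a_\lambda^{1/2}xa_\lambda^{1/2}$ need not increase with $\lambda$ for non-central $a$, one cannot appeal directly to normality of $\Phi$, and it is Haagerup's vector representation that makes the required domination available.
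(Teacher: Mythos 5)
Your proof is correct and follows essentially the same route as the paper: both decompose the normal weight via Haagerup's theorem, use $\Phi(a)<+\infty$ to obtain summability of the terms evaluated at $a$, and sum the resulting positive functionals in norm (you perform the summation in $\mathcal{M}_*$ and pull back through the isometric order isomorphism $V$ of Corollary \ref{L_inf->M}, while the paper sums the partial sums directly in the completion $L_1^h(a)$ --- an equivalent step). Your explicit check that the resulting element does not depend on the chosen representation of $\Phi$ is a welcome addition that the paper leaves implicit.
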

\begin{proof}
The condition $\Phi(a)<+\infty$ along with Lemma \ref{density} implies that $\Phi$ is semifinite.
According to \cite{Haa75} 
$\Phi(x)=\sum\limits_{i\in J} \omega_i(x)=\sup\{\sum\limits_{i\in\sigma} \omega_i(x)| \sigma\subset I, \mathrm{card} \sigma \in \mathbb{N}\},
x\in\mathcal{M}^+, \omega_i\in\mathcal{M}_*^+$ for all $x\in\mathcal{M}^+$, which implies that
$\Phi(x)$ is the tight upper bound of all finite sums indexed with the elements of $\sigma\subset J$.

If operator $a$ is bounded, then it is evident, that $\Phi(a)=\sum\limits_{i\in J} \omega_i(a)$. If $a$ is unbounded, then
$\Phi(a_\lambda)=\sum\limits_{i\in J} \omega_i(a_\lambda)\leq \sum\limits_{i\in J} \omega_i(a)$ and $\Phi(a)\leq \sum\limits_{i\in J} \omega_i(a)$. On the other hand,
for the finite $\sigma\subset J$ the inequality $\sum\limits_{i\in\sigma} \omega_i(a_\lambda)\leq \Phi(a_\lambda)$ holds,
therefore $\sum\limits_{i\in\sigma}\omega_i(a)\leq \Phi(a)$. Hence, $\sum\limits_{i\in J}\omega_i(a)=\sup\limits_{\substack{\sigma \subset J,\\
\mathrm{card} \sigma \in\mathbb{N}}} \sum\limits_{i\in \sigma}\omega_i(a)\leq \Phi(a)$, so $\Phi(a)=\sum\limits_{i\in J} \omega_i(a)$.

Since $\Phi(a)<+\infty$, it follows that $\omega_i\in\mathfrak{D}_a^+$. Also, 
since $\ker a =\{\mathit{0}\}$ and $a\geq \mathbf{0}$, it follows $\omega_i(a)=0$ if and only if $\omega_i=0$. Without loss of
generality, we consider $\Phi(a)=\sum\limits_{i=1}^{+\infty} \omega_i(a)$. 
Consider $\varphi_n:=\sum\limits_{i=1}^n\omega_i\in\mathfrak{D}_a^+$.
The sequence $\varphi_n$ is converging in the topology of $a$-norm. If $\|\varphi - \varphi_n\|_a \to 0$
then $\varphi \in L_\infty^+(a)$ and $\|\varphi\|_a=\lim\limits_n \|\varphi_n\|_a = \Phi(a)$. Moreover, for all
$\widehat{a^\frac{1}{2} x a^\frac{1}{2}}\in L_\infty(a)$ the chain of equalities 
$\sum\limits_{i=1}^{+\infty}\omega_i (\widehat{a^\frac{1}{2} x a^\frac{1}{2}})=\lim\limits_n\varphi_n(\widehat{a^\frac{1}{2} x a^\frac{1}{2}})
=\varphi(\widehat{a^\frac{1}{2} x a^\frac{1}{2}})$ holds. If $\widehat{a^\frac{1}{2} x a^\frac{1}{2}}\in L_\infty^+(a)$ is bounded, 
then there exists bounded operator $x_a\in\mathcal{M}^+$, such that $\widehat{a^\frac{1}{2} x a^\frac{1}{2}}(f,g)=\langle x_a f,g)$ for 
all $f,g\in D(a^\frac{1}{2})$ and $\varphi(\widehat{a^\frac{1}{2} x a^\frac{1}{2}})=\Phi(x_a)$.

\end{proof}

\begin{corollary}\label{C1(H)<B(H)}
Let $\mathcal{M}=B(H)$ and $a\in C_1^+(H)\subset B(H)$ be injective. Then 

(i) for any $x\in B^+(H)$ there exists sequence $(x_n)$ in $C_1^+(H)$,

such that $\|x \widetilde{\mathrm{Tr}}- x_n \widetilde{\mathrm{Tr}}\|_a\to 0$;

(ii) for any $x\in B^{sa}(H)$ there exists sequence $(x_n)$ in $C_1^{sa}(H)$,

such that $\|x \widetilde{\mathrm{Tr}}- x_n \widetilde{\mathrm{Tr}}\|_a\to 0$;

(iii) for any $x\in B(H)$ there exists sequence $(x_n)$ in $C_1(H)$,

such that $\|x \widetilde{\mathrm{Tr}}- x_n \widetilde{\mathrm{Tr}}\|_a\to 0$.
\end{corollary}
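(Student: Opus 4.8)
The plan is to read part (i) off from Theorem \ref{weight_embedding} and then bootstrap (ii) and (iii) from (i) by the usual decompositions into positive parts and real/imaginary parts together with the triangle inequality. First I would pin down what $x\widetilde{\mathrm{Tr}}$ means for a general $x\in B(H)$: since $a\in C_1^+(H)$, the operator $a^{\frac{1}{2}}$ is Hilbert--Schmidt, so $a^{\frac{1}{2}}xa^{\frac{1}{2}}\in C_1(H)$ for every $x\in B(H)$, and I take $x\widetilde{\mathrm{Tr}}:=V^{-1}\big((a^{\frac{1}{2}}xa^{\frac{1}{2}})\widetilde{\mathrm{Tr}}\big)\in L_1(a)$, where $V$ is the isometric isomorphism of Corollary \ref{L_inf->M}. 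This is manifestly linear in $x$, and for $x\in C_1(H)$ it agrees with the usual element by Corollary \ref{tr|aka|}; in particular $\|x\widetilde{\mathrm{Tr}}-y\widetilde{\mathrm{Tr}}\|_a=\mathrm{Tr}\,|a^{\frac{1}{2}}(x-y)a^{\frac{1}{2}}|$ for all $x,y\in B(H)$.

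For (i), given $x\in B^+(H)$ I would introduce the normal weight $\Phi_x(y):=\mathrm{Tr}(x^{\frac{1}{2}}yx^{\frac{1}{2}})$ on $B(H)^+$; normality is clear from $\mathrm{Tr}=\sum_j\langle\,\cdot\,e_j,e_j\rangle$, which exhibits $\Phi_x$ as a sum of positive normal functionals $y\mapsto\langle yx^{\frac{1}{2}}e_j,x^{\frac{1}{2}}e_j\rangle$. Since $a^{\frac{1}{2}}xa^{\frac{1}{2}}\le\|x\|a$ and $a$ is trace class, $\Phi_x(a)=\mathrm{Tr}(a^{\frac{1}{2}}xa^{\frac{1}{2}})\le\|x\|\,\mathrm{Tr}(a)<+\infty$, so Theorem \ref{weight_embedding} applies and $\Phi_x$ defines an element of $L_1^+(a)$; a short computation ($a^{\frac{1}{2}}(\,\cdot\,)a^{\frac{1}{2}}$ applied to $y\in B(H)$ gives $\Phi_x(a^{\frac{1}{2}}ya^{\frac{1}{2}})=\mathrm{Tr}(a^{\frac{1}{2}}xa^{\frac{1}{2}}y)$) identifies this element with the $x\widetilde{\mathrm{Tr}}$ of the previous paragraph. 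The proof of Theorem \ref{weight_embedding} realizes it as the $a$-norm limit of the partial sums $\varphi_n=\sum_{i=1}^n\omega_i$ of a decomposition $\Phi_x=\sum_i\omega_i$ into positive normal functionals $\omega_i\in\mathcal{M}_*^+$; as $\mathcal{M}_*^+=C_1^+(H)$, writing $\omega_i=k_i\widetilde{\mathrm{Tr}}$ with $k_i\in C_1^+(H)$ we get $\varphi_n=x_n\widetilde{\mathrm{Tr}}$ for $x_n:=\sum_{i=1}^n k_i\in C_1^+(H)$, and $\|x\widetilde{\mathrm{Tr}}-x_n\widetilde{\mathrm{Tr}}\|_a\to 0$, which is (i).

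For (ii) I would write $x=x^+-x^-$ with $x^\pm\in B^+(H)$, apply (i) to each to obtain $(x_n^+),(x_n^-)\subset C_1^+(H)$ with $\|x^\pm\widetilde{\mathrm{Tr}}-x_n^\pm\widetilde{\mathrm{Tr}}\|_a\to 0$, set $x_n:=x_n^+-x_n^-\in C_1^{sa}(H)$, and use linearity of $x\mapsto x\widetilde{\mathrm{Tr}}$ together with the triangle inequality to get $\|x\widetilde{\mathrm{Tr}}-x_n\widetilde{\mathrm{Tr}}\|_a\le\|x^+\widetilde{\mathrm{Tr}}-x_n^+\widetilde{\mathrm{Tr}}\|_a+\|x^-\widetilde{\mathrm{Tr}}-x_n^-\widetilde{\mathrm{Tr}}\|_a\to 0$. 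For (iii) I would write $x=\tfrac{1}{2}(x+x^*)+i\cdot\tfrac{1}{2i}(x-x^*)$ with both summands in $B^{sa}(H)$, apply (ii) to each, and combine in the same way.

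The part requiring the most care is really only the bookkeeping identification of $x\widetilde{\mathrm{Tr}}$ (for $x$ bounded but not trace class) with the element produced by the weight $\Phi_x$ and, through Corollary \ref{L_inf->M}, with $(a^{\frac{1}{2}}xa^{\frac{1}{2}})\widetilde{\mathrm{Tr}}\in\mathcal{M}_*$; once that is in place everything else is routine. As an alternative to invoking Theorem \ref{weight_embedding}, one can argue directly from $\|x\widetilde{\mathrm{Tr}}-y\widetilde{\mathrm{Tr}}\|_a=\mathrm{Tr}\,|a^{\frac{1}{2}}(x-y)a^{\frac{1}{2}}|$: with $p_n$ the orthogonal projection onto $\mathrm{lin}\{e_1,\dots,e_n\}$ for an orthonormal basis $(e_j)$, the operator $x_n:=p_nxp_n$ is finite rank, lies in $C_1^+(H)$ (resp. $C_1^{sa}(H)$, $C_1(H)$) whenever $x$ lies in the corresponding bounded class, and writing $a^{\frac{1}{2}}(x-p_nxp_n)a^{\frac{1}{2}}=a^{\frac{1}{2}}(\mathbf{1}-p_n)xa^{\frac{1}{2}}+a^{\frac{1}{2}}p_nx(\mathbf{1}-p_n)a^{\frac{1}{2}}$ and estimating each term by the Hilbert--Schmidt norm $\|\cdot\|_2$ (using $\|ABC\|_1\le\|A\|_2\|B\|\,\|C\|_2$) one gets $\|x\widetilde{\mathrm{Tr}}-x_n\widetilde{\mathrm{Tr}}\|_a\to 0$, because $\|(\mathbf{1}-p_n)a^{\frac{1}{2}}\|_2^2=\sum_{j>n}\langle ae_j,e_j\rangle\to 0$ by $\mathrm{Tr}(a)<+\infty$.
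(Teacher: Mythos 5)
Your main argument is correct and essentially the paper's own: for positive $x$ the weight $x\widetilde{\mathrm{Tr}}$ satisfies $x\widetilde{\mathrm{Tr}}(a)\le\|x\|\,\mathrm{Tr}(a)<+\infty$, embeds into $L_1^+(a)$ by Theorem \ref{weight_embedding}, and is approximated in the $a$-norm by elements of $\mathfrak{D}_a^+=C_1^+(H)\widetilde{\mathrm{Tr}}$ (the paper cites Theorem \ref{posititvity} here, while you reuse the partial sums from the proof of Theorem \ref{weight_embedding} --- the same mechanism), after which (ii) and (iii) follow by the standard selfadjoint and real/imaginary decompositions, exactly as the paper's ``trivially follow from (i)''. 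Your additional bookkeeping identifying $x\widetilde{\mathrm{Tr}}$ with $V^{-1}\bigl((a^{\frac{1}{2}}xa^{\frac{1}{2}})\widetilde{\mathrm{Tr}}\bigr)$, and the alternative finite-rank truncation estimate via $\|(\mathbf{1}-p_n)a^{\frac{1}{2}}\|_2\to 0$, are both sound refinements of points the paper leaves implicit.
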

\begin{proof}
 Since an operator $a$ is bounded, it follows that $\mathfrak{D}_a^+=\mathcal{M}_*^+$. Note that if $x\in B^+(H)$, then 
 $x\widetilde{\mathrm{Tr}}$ is normal semifinite weight, such that $x\widetilde{\mathrm{Tr}}(a)<+\infty$. 
 Applying Theorem \ref{posititvity} and \ref{weight_embedding} we deduce 
 the statement $(i)$. The statements $(ii)$ and $(iii)$ trivialy follow from $(i)$.
 
\end{proof}

\begin{theorem}\label{regularDecomp}
 Any element $\varphi$ of $L_1^h(a)$ can be represented as the difference of two elements $L_1^+(a)$, which are
 embeddings of normal semifinite weights into $L_1(a)$.
\end{theorem}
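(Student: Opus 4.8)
The plan is to use only two things already in hand: that $L_1^h(a)$ is \emph{by definition} the $\|\cdot\|_a$-completion of $\mathfrak{D}_a^h$, and that the infimum defining $\|\cdot\|_a$ can be almost attained. From these I will write a given $\varphi\in L_1^h(a)$ as a $\|\cdot\|_a$-convergent telescoping series whose positive parts and negative parts separately add up to two normal semifinite weights, to which Theorem \ref{weight_embedding} applies.

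Fix $\varphi\in L_1^h(a)$ (so $a$ is injective). Pick $(\varphi_n)\subset\mathfrak{D}_a^h$ with $\varphi_n\to\varphi$ in $\|\cdot\|_a$ and, after passing to a subsequence, assume $\|\varphi_n-\varphi_{n-1}\|_a<2^{-n}$ for all $n\ge 2$. Put $\rho_1:=\varphi_1$ and $\rho_n:=\varphi_n-\varphi_{n-1}\in\mathfrak{D}_a^h$ for $n\ge 2$, so that $\varphi=\sum_{n\ge 1}\rho_n$ with the series converging in $\|\cdot\|_a$. For each $n$ the very definition of $\|\cdot\|_a$ as an infimum lets me choose $\rho_n^{+},\rho_n^{-}\in\mathfrak{D}_a^{+}$ with $\rho_n=\rho_n^{+}-\rho_n^{-}$ and $\rho_n^{+}(a)+\rho_n^{-}(a)<\|\rho_n\|_a+2^{-n}$; hence $\sum_n\rho_n^{+}(a)<\infty$ and $\sum_n\rho_n^{-}(a)<\infty$.

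Now set $\Phi^{\pm}:=\sum_{n\ge 1}\rho_n^{\pm}$, a countable sum of elements of $\mathcal{M}_*^{+}$, hence a normal weight on $\mathcal{M}^{+}$ (in the sense of \cite{Haa75} used above), with $\Phi^{\pm}(a)=\sum_n\rho_n^{\pm}(a)<\infty$; by Lemma \ref{density} (exactly as in the first line of the proof of Theorem \ref{weight_embedding}) $\Phi^{\pm}$ is semifinite. By Theorem \ref{weight_embedding} each $\Phi^{\pm}$ determines an element of $L_1^{+}(a)$, and I claim this element is $\widetilde\varphi^{\pm}:=\lim_N\sum_{n=1}^N\rho_n^{\pm}$, the limit taken in $\|\cdot\|_a$: the partial sums lie in $\mathfrak{D}_a^{+}$ and are $\|\cdot\|_a$-Cauchy since $\|\sum_{n=N+1}^M\rho_n^{\pm}\|_a\le\sum_{n=N+1}^M\rho_n^{\pm}(a)\to 0$, while $v$ carries $\widetilde\varphi^{\pm}$ to the functional $x\mapsto\lim_{\lambda\to+\infty}\Phi^{\pm}(a_\lambda^{1/2}xa_\lambda^{1/2})$ on $\mathcal{M}$ (using Theorem \ref{definition_afa} termwise and interchanging $\sum_n$ with $\lim_\lambda$ via the uniform estimate $\rho_n^{\pm}(a_\lambda^{1/2}xa_\lambda^{1/2})\le\|x\|\,\rho_n^{\pm}(a)$), a functional depending on $\Phi^{\pm}$ alone and not on the chosen decomposition; so $\widetilde\varphi^{\pm}$ coincides with the embedding produced by Theorem \ref{weight_embedding}. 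Since sums and differences of $\|\cdot\|_a$-convergent sequences pass to the limit,
$$\widetilde\varphi^{+}-\widetilde\varphi^{-}=\lim_N\Big(\sum_{n=1}^N\rho_n^{+}-\sum_{n=1}^N\rho_n^{-}\Big)=\lim_N\sum_{n=1}^N\rho_n=\lim_N\varphi_N=\varphi,$$
which exhibits $\varphi$ as the difference of the two elements $\widetilde\varphi^{+},\widetilde\varphi^{-}\in L_1^{+}(a)$ embedding the normal semifinite weights $\Phi^{+},\Phi^{-}$.

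The step I expect to require the most care is the identification in the previous paragraph of the element attached to $\Phi^{\pm}$ by Theorem \ref{weight_embedding} with the concrete limit $\widetilde\varphi^{\pm}$ of our particular partial sums, i.e. checking that the ``embedding of a weight'' is insensitive to which Haagerup decomposition one uses in the proof of Theorem \ref{weight_embedding}. Everything else (the telescoping, the summability $\sum_n\rho_n^{\pm}(a)<\infty$ extracted from the infimum in the definition of $\|\cdot\|_a$, and the semifiniteness of $\Phi^{\pm}$) is routine given the machinery already developed.
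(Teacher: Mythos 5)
Your proposal is correct and follows essentially the same route as the paper: telescoping the approximating sequence, choosing near-optimal positive decompositions with summable $a$-values, taking $\|\cdot\|_a$-limits of the positive partial sums, and invoking Haagerup's result, Lemma \ref{density} and Theorem \ref{weight_embedding} to recognize the limits as embeddings of the normal semifinite weights $\Phi^{\pm}$. The only difference is that you explicitly verify the embedding is independent of the chosen decomposition of $\Phi^{\pm}$, a point the paper leaves implicit.
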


\begin{proof}
Let ($\omega_i$) be a sequence in $\mathfrak{D}_a^{h}$, such that $\|\omega_n-\varphi\|_a\to 0$. Passing to the subsequence
if it is necessary, we are able to assume that $\|\omega_1\|_a+\sum\limits_{i=1}^\infty\|\omega_{i+1}-\omega_i\|_a<\infty.$
Denoting $\varphi_1=\omega_1$ and $\varphi_n=\omega_n-\omega_{n-1}$ if $n=\overline{2,+\infty}$, we obtain
$\sum\limits_{i=1}^\infty \|\varphi_i\|_a<\infty$, hence $\|\sum\limits_{i=1}^n \varphi_i-\varphi\|_a\to 0$.
Now choose $\varphi_n^1$, $\varphi_n^2$ such that $\varphi_n=\varphi_n^1-\varphi_n^2, 
\varphi_n^1,\varphi_n^2\in\mathfrak{D}_a^+$ and $\varphi_n^1(a)+\varphi_n^2(a)\leq \|\varphi_n\|_a+\frac{1}{2^n}.$
 The sequences $(\sum\limits_{i=1}^n\varphi_i^1)$, $(\sum\limits_{i=1}^n\varphi_i^2)$ are fundamental
in the topology of $\|\cdot\|_a$ and all elements of these sequences are positive,
therefore there exist $\varphi^1,\ \varphi^2\in L_1^+(a)$, such that $\|\varphi^k-\sum\limits_{i=1}^n\varphi_i^k\|_a \to 0$ ($k=\overline{1,2}$).
For $\widehat{a^\frac{1}{2}xa^\frac{1}{2}}\in L_\infty(a)$ the chain of equalities 
$\varphi(\widehat{a^\frac{1}{2}xa^\frac{1}{2}})=\lim\limits_{n\to\infty}\omega_n(\widehat{a^\frac{1}{2}xa^\frac{1}{2}})=
\lim\limits_{n\to\infty}(\sum\limits_{i=1}^n \varphi_i^1(\widehat{a^\frac{1}{2}xa^\frac{1}{2}})-\sum\limits_{i=1}^n \varphi_i^2(\widehat{a^\frac{1}{2}xa^\frac{1}{2}}))=
\lim\limits_{n\to\infty}\sum\limits_{i=1}^n \varphi_i^1(\widehat{a^\frac{1}{2}xa^\frac{1}{2}})-\lim\limits_{n\to\infty}\sum\limits_{i=1}^n \varphi_i^2(\widehat{a^\frac{1}{2}xa^\frac{1}{2}})=
\varphi^1(\widehat{a^\frac{1}{2}xa^\frac{1}{2}})-\varphi^2(\widehat{a^\frac{1}{2}xa^\frac{1}{2}})$ holds.

Let $\Phi^k(x):=\sum\limits_{n=1}^{+\infty} \varphi_n^1(x)$ and $\Phi^2(x):=\sum\limits_{n=1}^{+\infty} \varphi_n^2(x)$
for $x\in \mathcal{M}^+$, $k=\overline{1,2}$. By \cite{Haa75}, $\Phi^1$, $\Phi^2$ are normal weights. Moreover,
$\Phi^k(a)=\sum\limits_{n=1}^{+\infty} \varphi_n^1(a)<+\infty$ ($k=\overline{1,2}$).
By Lemma \ref{density} $\Phi^1,\ \Phi^2$ are semifinite and by Theorem \ref{weight_embedding}
elements $\varphi^1, \varphi^2\in L_\infty^+(a)$ are embeddings of $\Phi^1, \Phi^2$, respectively.

\end{proof}

The latter result is analogue of \cite[Theorem 1]{Tik82}.

\begin{definition}\cite{TruShe85}
 Let $\Phi$ be a normal semifinite weight on $\mathcal{M}$. We call it regular if for any $\varphi\in \mathcal{M}_*^+$ ($\varphi\neq 0$)
 there exists $\omega\in \mathcal{M}_*^+$  ($\omega\neq 0$), such that $\omega\leq \varphi$ and $\omega \leq \Phi$.
\end{definition}

By \cite[Theorem 4]{TruShe85} a normal semifinite weight $\Phi$ on $\mathcal{M}$ is regular if and only if
each sesquilinear form in $L_1^+(\Phi)$ is closable in the sense of \cite{Kato}. By \cite[Theorem 6]{TruShe85} 
a normal semifinite weight on $B(H)$ is regular if and only if $\Phi=k \widetilde{\mathrm{Tr}}$, where $k$ is a positive selfadjoint 
operator in $H$, such that it has the bounded inverse operator.

\begin{theorem}
Let $\mathrm{dim}H = \infty$. For an injective operator $a$ in $C_1^+(H)$, there exists an element $\psi\in L_1^+(a)$, such that $\psi$
cannot be represented as an embedding of a normal semifinite weight.
\end{theorem}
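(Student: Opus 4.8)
The plan is to realize the required $\psi$ as the preimage under $V$ of a rank-one vector functional built from a vector lying outside $\mathrm{Im}(a^{1/2})$. Throughout, $\mathcal{M}=B(H)$, so $\mathcal{M}_*=C_1(H)$ and $\mathfrak{D}_a^+=\mathcal{M}_*^+$ (since $a$ is bounded), and $V\colon L_1(a)\to\mathcal{M}_*$ denotes the isometric isomorphism of Corollary \ref{L_inf->M}, for which $V(\psi)=a^{1/2}\psi a^{1/2}$.

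First I would use that $a^{1/2}$ is compact and, as $\dim H=\infty$, not surjective, so there is a nonzero $v\in H\setminus\mathrm{Im}(a^{1/2})$. Set $\omega_v:=\langle\,\cdot\,v,v\rangle\in\mathcal{M}_*^+$ and $\psi:=V^{-1}(\omega_v)$. Since $V$ carries $L_1^+(a)$ onto $\mathcal{M}_*^+$, we get $\psi\in L_1^+(a)$ and $\psi\neq0$. (Incidentally $\psi\notin\mathfrak{D}_a^+$, so it is a genuine element of the completion: with $g_n:=(\frac{1}{n}+a^{1/2})^{-1}v$ one has $a^{1/2}g_n\to v$, hence $V(\langle\,\cdot\,g_n,g_n\rangle)=\langle\,\cdot\,a^{1/2}g_n,a^{1/2}g_n\rangle\to\omega_v$ in $\mathcal{M}_*$, so $\langle\,\cdot\,g_n,g_n\rangle\to\psi$ in the $a$-norm.)

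Next I would assume that $\psi$ is the element of $L_1^+(a)$ attached, as in Theorem \ref{weight_embedding}, to a normal semifinite weight $\Phi$ on $B(H)$, and seek a contradiction. Since the embedding preserves norms and $V$ is isometric, $\Phi(a)=\|\psi\|_a=\|V(\psi)\|=\|\omega_v\|=\|v\|^2<\infty$ (compare Corollary \ref{f(1) analoge}). By \cite{Haa75} write $\Phi=\sum_m\langle\,\cdot\,\xi_m,\xi_m\rangle$ with a countable family $(\xi_m)$; then $\Phi(a)=\sum_m\|a^{1/2}\xi_m\|^2<\infty$, so every $\langle\,\cdot\,\xi_m,\xi_m\rangle$ lies in $\mathfrak{D}_a^+$ and $\sum_m\langle\,\cdot\,a^{1/2}\xi_m,a^{1/2}\xi_m\rangle$ converges in $\mathcal{M}_*$. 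Using the construction in Theorem \ref{weight_embedding}, the continuity of $V$, and the Corollary following Theorem \ref{definition_afa}, the embedding of $\Phi$ then satisfies
$$\omega_v\;=\;V(\psi)\;=\;a^{1/2}\psi a^{1/2}\;=\;\sum_m\langle\,\cdot\,a^{1/2}\xi_m,a^{1/2}\xi_m\rangle\qquad\text{in }\mathcal{M}_*.$$

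To finish, let $p$ be the projection onto $\mathbb{C}v$ and evaluate the last identity on $\mathbf{1}-p$. From $(\mathbf{1}-p)v=0$ we get $0=\sum_m\|(\mathbf{1}-p)a^{1/2}\xi_m\|^2$, so $a^{1/2}\xi_m=c_m v$ for scalars $c_m$ and all $m$; since $\omega_v\neq0$ some $c_{m_0}\neq0$, and then $v=c_{m_0}^{-1}a^{1/2}\xi_{m_0}\in\mathrm{Im}(a^{1/2})$, contradicting the choice of $v$. Hence $\psi$ cannot be represented as an embedding of a normal semifinite weight. I expect the only step needing genuine care to be the displayed identity --- that the embedding of a weight with $\Phi(a)<\infty$ is exactly $V^{-1}$ of the $\mathcal{M}_*$-norm-convergent sum $\sum_m\langle\,\cdot\,a^{1/2}\xi_m,a^{1/2}\xi_m\rangle$, so that every vector representing $a^{1/2}\Phi a^{1/2}$ lies in $\mathrm{Im}(a^{1/2})$ --- after which the contradiction is elementary.
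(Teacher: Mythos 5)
Your argument is correct, but it proves the theorem by a genuinely different route than the paper. The paper's proof is non-constructive: it identifies $L_1^h(a)$ with Sherstnev's space $L_1(\Phi)$ for the (non-regular) weight $\Phi=a\widetilde{\mathrm{Tr}}$ via the map $x\mapsto x\widetilde{\mathrm{Tr}}$, and then shows that if \emph{every} element of $L_1^+(a)$ were the embedding of a normal semifinite weight, the Pedersen--Takesaki Radon--Nikodym theorem would make every form in $L_1^+(\Phi)$ closable, contradicting the characterization of regular weights on $B(H)$ (Theorems 4 and 6 of the Trunov--Sherstnev survey). You instead produce an explicit witness: $\psi=V^{-1}(\langle\cdot\,v,v\rangle)$ with $v\notin\mathrm{Im}(a^{\frac{1}{2}})$, and show directly from Theorem \ref{weight_embedding}, the continuity of $V$, and the Corollary to Theorem \ref{definition_afa} that the $V$-image of any weight embedding is a norm-convergent sum $\sum_m\langle\cdot\,a^{\frac{1}{2}}\xi_m,a^{\frac{1}{2}}\xi_m\rangle$, after which evaluation at $\mathbf{1}-p$ forces $v\in\mathrm{Im}(a^{\frac{1}{2}})$, a contradiction. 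This buys elementarity, independence from the external references (Pedersen--Takesaki, Kato closability, Lugovaya--Sherstnev), and a concrete description of which elements fail to come from weights, whereas the paper's argument buys the connection with regularity of weights and Sherstnev's $L_1(\Phi)$ theory. Two small points you should make explicit: the family $(\xi_m)$ produced by Haagerup's decomposition may a priori be uncountable, but $\Phi(a)<\infty$ together with injectivity of $a$ forces all but countably many summands to vanish (exactly as in the proof of Theorem \ref{weight_embedding}); and the displayed identity should be derived from the decomposition actually used to define the embedding, refined into vector functionals --- legitimate because the resulting double series of positive functionals is norm-absolutely convergent and may be rearranged.
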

\begin{proof}
Let $\Phi=a\widetilde{\mathrm{Tr}}$ be a positive normal functional on $B(H)$.
From Corollaries \ref{C1(H)<B(H)}, \ref{tr|aka|} and \cite{LugShe84}, it follows that the mapping
$x\in B(H) \mapsto x\widetilde{\mathrm{Tr}}$ determines an isometrical isomorphism of
$L_1(\Phi)$ described in \cite{TruShe85} onto $L_1^h(a)$

If each element of $L_1^+(a)$ can be represented as an embedding of a positive normal semifinite weight,
then by \cite[Theorem 5.12]{Pedersen-Takesaki} for each element $\psi$ of $L_1^+(a)$ there exists the correspoding
selfadjoint operator $k_\psi\geq0$, such that $\psi=k_\psi\widetilde{\mathrm{Tr}}$. The
corresponding sesquilinear form $\widehat{k_\psi}\in L_1^+(\Phi)$ 
($\widehat{k_\psi}(f,g):=\langle k^\frac{1}{2}_\psi f, k^\frac{1}{2}_\psi g \rangle$) is closable by \cite[Theorem 1.27]{Kato}.

By \cite[Theorem 6]{TruShe85} the weight $\Phi$ is not regular.
Hence, by \cite[Theorem 4]{TruShe85} there exists a positive nonclosable sesquilinear form in $L_1^+(\varphi)$,
so we get a contradiction.

\end{proof}

\section{Case of C*-algebras}

It is notable that the same approach can be applied to the case of C*-algebras.
Let $\mathcal{A}$ be a C*-algebra. By $\mathcal{A}^+$, $\mathcal{A}^{*+}$ we denote its positive cone
and the positive cone of its continuous dual. By $\mathcal{A}^{sa}$ we denote the set of all selfadjoint
operators in $\mathcal{A}$, by $\mathcal{A}^{*h}$ we denote the set of all continuous hermitian functionals
on $\mathcal{A}$. For $a\in \mathcal{A}^+$
we define $\|\cdot\|_a$ on $\mathcal{A}^*$ as the mapping $f \in \mathcal{A}^* \mapsto \|a^\frac{1}{2} f a^\frac{1}{2}\|$.

\begin{theorem}
For all $f \in \mathcal{A}^{*h}$ the equality $\|f\|_a=\inf \{ f_1(a)+f_2(a)| f=f_1-f_2, f_1, f_2 \in \mathcal{A}^{*+}\}$
holds.
\end{theorem}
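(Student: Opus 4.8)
The plan is to reduce the statement to the von Neumann algebra identity $\|\varphi\|_a=\|a^\frac{1}{2}\varphi a^\frac{1}{2}\|$ from \cite[Theorem 2]{SkvTik98} by passing to the universal enveloping von Neumann algebra. First I would set $\mathcal{M}:=\mathcal{A}^{**}$; this is a unital von Neumann algebra whose predual $\mathcal{M}_*$ is identified isometrically with $\mathcal{A}^*$ through the restriction map $g\mapsto g|_{\mathcal{A}}$, and $a\in\mathcal{A}^+$ becomes a bounded positive operator in $\mathcal{M}$. The essential bookkeeping step is that this restriction map carries $\mathcal{M}_*^+$ bijectively onto $\mathcal{A}^{*+}$ — every positive functional on $\mathcal{A}$ has a unique normal extension to $\mathcal{M}$ — hence $\mathcal{M}_*^h$ bijectively onto $\mathcal{A}^{*h}$; and since $a$ already lies in $\mathcal{A}$, the number $h(a)$ is the same whether $h\in\mathcal{M}_*^+$ is read as a functional on $\mathcal{M}$ or on $\mathcal{A}$. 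Thus, for a fixed $f\in\mathcal{A}^{*h}=\mathcal{M}_*^h$, the decompositions $f=f_1-f_2$ with $f_1,f_2\in\mathcal{A}^{*+}$ are exactly those with $f_1,f_2\in\mathcal{M}_*^+$, and the corresponding sums $f_1(a)+f_2(a)$ coincide.

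Next I would match the two meanings of $a^\frac{1}{2}fa^\frac{1}{2}$. For $f\in\mathcal{A}^*$ the map $y\mapsto f(a^\frac{1}{2}ya^\frac{1}{2})$ on $\mathcal{M}$ is $\sigma$-weakly continuous, being the composition of the normal functional $f$ with the $\sigma$-weakly continuous map $y\mapsto a^\frac{1}{2}ya^\frac{1}{2}$, so it belongs to $\mathcal{M}_*$, and its restriction to $\mathcal{A}$ is precisely the functional $a^\frac{1}{2}fa^\frac{1}{2}\in\mathcal{A}^*$ used in the definition of $\|\cdot\|_a$ on $\mathcal{A}^*$. Under the isometric identification $\mathcal{M}_*=\mathcal{A}^*$ this yields $\|a^\frac{1}{2}fa^\frac{1}{2}\|_{\mathcal{M}_*}=\|f\|_a$.

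Finally I would apply \cite[Theorem 2]{SkvTik98} to the von Neumann algebra $\mathcal{M}$ and the bounded positive operator $a$: for $f\in\mathcal{M}_*^h$,
$$\|a^\frac{1}{2}fa^\frac{1}{2}\|=\inf\{f_1(a)+f_2(a)\mid f=f_1-f_2,\ f_1,f_2\in\mathcal{M}_*^+\}.$$
Rewriting the left-hand side via the previous paragraph and the right-hand side via the identification of positive decompositions gives exactly $\|f\|_a=\inf\{f_1(a)+f_2(a)\mid f=f_1-f_2,\ f_1,f_2\in\mathcal{A}^{*+}\}$. As an independent sanity check of the easy inequality one may also argue directly: if $f=f_1-f_2$ with $f_i\in\mathcal{A}^{*+}$, then $a^\frac{1}{2}f_ia^\frac{1}{2}\in\mathcal{A}^{*+}$, and taking an approximate unit $(e_\alpha)$ of $\mathcal{A}$ with $a^\frac{1}{2}e_\alpha a^\frac{1}{2}\to a$ in norm gives $\|a^\frac{1}{2}f_ia^\frac{1}{2}\|=\lim_\alpha f_i(a^\frac{1}{2}e_\alpha a^\frac{1}{2})=f_i(a)$, whence $\|f\|_a\le\|a^\frac{1}{2}f_1a^\frac{1}{2}\|+\|a^\frac{1}{2}f_2a^\frac{1}{2}\|=f_1(a)+f_2(a)$.

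There is no deep obstacle here: the whole weight of the statement already sits in \cite[Theorem 2]{SkvTik98}, and what remains is to verify that the data attached to $\mathcal{A}$ transports faithfully to $\mathcal{A}^{**}$. The one point demanding a little care is the identification $\mathcal{M}_*^+\cong\mathcal{A}^{*+}$ by unique normal extension, together with the fact that it preserves the value at the fixed element $a$; one should also note that no unit of $\mathcal{A}$ is used, since $\mathcal{A}^{**}$ is always a unital von Neumann algebra.
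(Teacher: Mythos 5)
Your proposal is correct and follows essentially the same route as the paper: pass to the universal enveloping von Neumann algebra (here realized as $\mathcal{A}^{**}$), identify $\mathcal{A}^*$ isometrically with its predual so that hermitian and positive functionals and the value at $a$ transport faithfully, and then invoke \cite[Theorem 2]{SkvTik98} for the bounded positive operator $a$. Your extra bookkeeping on matching the two meanings of $a^{\frac{1}{2}}fa^{\frac{1}{2}}$ and the approximate-unit check of the easy inequality are fine but not a different method.
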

\begin{proof}
Let $\pi$ be the embedding of $\mathcal{A}$ into the universal enveloping von Neumann algebra $\mathcal{N}$. For $f\in \mathcal{A}^{*h}$
let $\varphi$ be the corresponding element of $\mathcal{N}_*^h$, then $\|f\|_a=\|a^\frac{1}{2} f a^\frac{1}{2}\|=
\|\pi(a)^\frac{1}{2} \varphi \pi (a)^\frac{1}{2}\|$. Hence, according to \cite[Theorem 2]{SkvTik98} the equality 
$\|f\|_a=\|\varphi\|_{\pi(a)}$ holds. Therefore, $$\|f\|_a=\inf\{\varphi_1(\pi(a))+\varphi_2(\pi(a)) | \varphi=\varphi_1-\varphi_2,
\varphi_1, \varphi_2 \in \mathcal{N}_*^{+}\}=$$ $$=\inf\{f_1(a)+f_2(a) | f=f_1-f_2, f_1, f_2\in \mathcal{A}^{*+}\}$$.

\end{proof}

\begin{theorem}\label{C*-norm}
$\|\cdot\|_a$ is a norm on $\mathcal{A}^*$ if and only if $\varphi(a)>0$ for all $\varphi\in\mathcal{A}^{*+}\setminus\{\mathbf{0}\}$. 
\end{theorem}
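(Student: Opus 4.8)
The plan is to transfer everything to the universal enveloping von Neumann algebra $\mathcal N$ of $\mathcal A$, exactly as in the proof of the previous theorem, and there to invoke Theorem~\ref{faithfullness} and Lemma~\ref{density}.

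For necessity, suppose some $\varphi\in\mathcal A^{*+}\setminus\{\mathbf 0\}$ satisfies $\varphi(a)=0$. Since $a$ is a positive, hence bounded, element of $\mathcal A$ and $\varphi\in\mathcal A^{*h}$, the previous theorem applied with the decomposition $\varphi=\varphi-\mathbf 0$ yields $\|\varphi\|_a=\inf\{f_1(a)+f_2(a)\mid\varphi=f_1-f_2,\ f_1,f_2\in\mathcal A^{*+}\}\le\varphi(a)=0$, so $\|\varphi\|_a=0$ while $\varphi\neq\mathbf 0$ and $\|\cdot\|_a$ fails to be a norm on $\mathcal A^*$. Contrapositively, if $\|\cdot\|_a$ is a norm, then $\varphi(a)>0$ for every $\varphi\in\mathcal A^{*+}\setminus\{\mathbf 0\}$.

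For sufficiency, assume $\varphi(a)>0$ for all $\varphi\in\mathcal A^{*+}\setminus\{\mathbf 0\}$, and let $\pi\colon\mathcal A\to\mathcal N$ be the canonical embedding, under which $\mathcal A^*$ is identified with $\mathcal N_*$, $\mathcal A^{*+}$ with $\mathcal N_*^+$, $\pi(\mathcal A)$ is $\sigma$-weakly dense in $\mathcal N$, $\pi(a)^{\frac{1}{2}}=\pi(a^{\frac{1}{2}})$, and, for $f\in\mathcal A^*$ with associated normal functional $\psi\in\mathcal N_*$, one has $\psi(\pi(a))=f(a)$ and $a^{\frac{1}{2}}fa^{\frac{1}{2}}$ corresponds to the normal functional $\pi(a)^{\frac{1}{2}}\psi\pi(a)^{\frac{1}{2}}$ on $\mathcal N$. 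I would first observe that $\pi(a)$ is injective: otherwise a unit vector $\xi$ with $\pi(a)\xi=\mathit{0}$ gives the nonzero normal state $\langle \cdot\, \xi,\xi\rangle\in\mathcal N_*^+$ with $\langle\pi(a)\xi,\xi\rangle=0$, i.e.\ a nonzero positive functional on $\mathcal A$ vanishing at $a$, contradicting the hypothesis. Now let $f\in\mathcal A^*$ with $\|f\|_a=0$, i.e.\ $a^{\frac{1}{2}}fa^{\frac{1}{2}}=\mathbf 0$; equivalently $\psi\bigl(\pi(a)^{\frac{1}{2}}\pi(x)\pi(a)^{\frac{1}{2}}\bigr)=0$ for all $x\in\mathcal A$. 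Since $y\mapsto\psi\bigl(\pi(a)^{\frac{1}{2}}y\pi(a)^{\frac{1}{2}}\bigr)$ is $\sigma$-weakly continuous and $\pi(\mathcal A)$ is $\sigma$-weakly dense in $\mathcal N$, it follows that $\psi\bigl(\pi(a)^{\frac{1}{2}}y\pi(a)^{\frac{1}{2}}\bigr)=0$ for every $y\in\mathcal N$; as $\pi(a)$ is injective and bounded, Lemma~\ref{density} applied in $\mathcal N$ makes $\pi(a)^{\frac{1}{2}}\mathcal N\pi(a)^{\frac{1}{2}}$ $\sigma$-weakly dense in $\mathcal N$, so the normal functional $\psi$ vanishes identically and $f=\mathbf 0$. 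Hence $\|\cdot\|_a$ is a norm on $\mathcal A^*$.

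The only delicate point I expect is this last promotion of the identity $a^{\frac{1}{2}}fa^{\frac{1}{2}}=\mathbf 0$ from $\pi(\mathcal A)$ to all of $\mathcal N$, which rests on the normality of $\psi$, the $\sigma$-weak density of $\pi(\mathcal A)$ in $\mathcal N$, and Lemma~\ref{density}; the rest is a routine transcription of Theorem~\ref{faithfullness} through the enveloping algebra. Alternatively, the sufficiency part can be finished by noting that $a^{\frac{1}{2}}f^*a^{\frac{1}{2}}=(a^{\frac{1}{2}}fa^{\frac{1}{2}})^*$, so $\|f^*\|_a=\|f\|_a$ and the null space of $\|\cdot\|_a$ is a $*$-closed subspace of $\mathcal A^*$; it therefore suffices to know that $\|\cdot\|_a$ is a norm on $\mathcal A^{*h}$, which follows from Theorem~\ref{faithfullness} applied to the injective bounded operator $\pi(a)$.
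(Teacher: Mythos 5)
Your proof is correct and follows essentially the same route as the paper: pass to the universal enveloping von Neumann algebra $\mathcal N$, note that the hypothesis $\varphi(a)>0$ on $\mathcal A^{*+}\setminus\{\mathbf 0\}$ forces $\pi(a)$ to be injective, and conclude through Theorem~\ref{faithfullness}/Lemma~\ref{density}, with the necessity direction reducing to $\|\varphi\|_a\le\varphi(a)=0$ for a nonzero positive $\varphi$. Your explicit treatment of non-hermitian functionals (via the $\sigma$-weak density argument, or the $*$-closedness of the null space of $\|\cdot\|_a$) is a detail the paper leaves implicit, but it does not change the approach.
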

\begin{proof}
 If $\varphi$ is positive, then $\|a^\frac{1}{2}\varphi a^\frac{1}{2}\|=a^\frac{1}{2}\varphi a^\frac{1}{2}(\mathbf{1})=\varphi(a)$.
Hence, if $\varphi(a)=0$ and $\varphi\neq 0$ then $\|\cdot\|_a$ is not a norm.

 Using the embedding $\pi$ of $\mathcal{A}$ into the universal enveloping von Neumann algebra $\mathcal{N}$ we
get a positive selfadjoint injective operator $\pi(a)\in \mathcal{N}$. Using Theorem \ref{faithfullness} we get that
$\|\cdot\|_{\pi(a)}$ is a norm on $\mathcal{N}_*$. Since $\mathcal{A}^*\cong \mathcal{N}_*$ and from
\cite[Theorem 2]{SkvTik98}, it follows that $\|\cdot\|_a$ is a norm.

\end{proof}

For an operator $a$, such that $\varphi(a)>0$ for all $\varphi\in \mathcal{A}^{*+}\setminus \{0\}$,
we construct $M(a)$ as the complition of $(\mathcal{A}^*,\|\cdot\|_a)$. Also, according to Theorem \ref{L_inf->M}
the mapping $\varphi \in M(a) \mapsto a^\frac{1}{2}\varphi a^\frac{1}{2} \in \mathcal{A}^*$
is an isometrical isomorphism of $M(a)$ onto $\mathcal{A}^*$.

\begin{remark}\label{bounded_inverse}
If an operator $a$ has the bounded inverse operator, then $\|\cdot\|_a$ is a norm on $\mathcal{A}^*$,
since $\frac{1}{\|a^{-1}\|}\|\cdot\|\leq \|\cdot\|_a\leq \|a\|\|\cdot\|$. Also, the
latter inequalities imply, that $L_1(a)$ coincides with $\mathcal{A}^*$ as the topological vector spaces.
\end{remark}

The condition $\varphi(a)>0$ for all $\varphi\in\mathcal{A}^{*+}\setminus\{0\}$ has varios interpretations for
various C*-algebras. If we consider $a=(a_n)\in\mathcal{A}=c_0$, then this condition is equivalent to $\forall n\in \mathbb{N}\ a_n>0$.
If we consider $a=(a_n)\in\mathcal{A}=c$, then this condition implies $\forall n\in \mathbb{N}\ a_n>0$ and $\lim a_n >0$,
hence for operator $a=(a_n)$ there exists the bounded inverse operator $a^{-1}=(\frac{1}{a_n})\in c$.

\begin{remark} Assume $\mathcal{A}$ is a von Neumann algebra.
If an operator $a\in \mathcal{A}$ satisfies the conditions of Theorem \ref{C*-norm}, then it also
satisfies the conditions of Theorem \ref{faithfullness}. Therefore, we are able to construct $L_1(a)$ and $M(a)$.
Evidently, we are able to naturally embed $L_1(a)$ into $M(a)$ as a linear subspace. If $\dim(H)=+\infty$, then
$\mathcal{A}_*$ and $\mathcal{A}^*$ do not coincide. According to Corollary \ref{isomorf_u} $L_1(a)$ and $M(a)$ are isometrically isomorphic
to $\mathcal{A}_*$ and $\mathcal{A}^*$, respectively. Therefore, if $\dim H=+\infty$, then $L_1(a)$ and $M(a)$ do not coincide.
\end{remark}

\begin{example}
To give the example of such operator $a$, that satisfy the condition of Theorem \ref{faithfullness}, but
does not satisfy the condition of Theorem \ref{C*-norm}, consider $\mathcal{A}=\ell_\infty$.

Since $\ell_\infty$ is an abelian von Neumann algebra, which acts on the Hilbert space $H=\ell_2$, 
we are able to construct $L_1(a)$ for an injective operator $a$. The injectiveness of an operator $a$ is
equivalent to the condition, that $a_n>0$ for each $n\in\mathbb{N}$. For example, $(a_n)=(\frac{1}{n})\in \ell_\infty^+$ is injective.
According to Remark \ref{bounded_inverse} if $a$ has a bounded inverse operator, then $a$ satisfies the conditions of Theorem \ref{C*-norm}.
Let us prove that if $a$ satisfies the conditions of Theorem \ref{C*-norm}, then $a$ has the bounded inverse operator.
If we assume the contrary, then eather there exists $a_n=0$, or there exists a subsequence $a_{n_k}$, such that $\lim a_{n_k}=0$.
Evidently, for each $a_{n}$ there exists a functional $\varphi_n\in \ell_\infty^{*+}$, such that $\varphi_n(a):=a_n$,
therefore $\forall n\in \mathbb{N} \ a_n>0$. Also for each $a\in \ell_\infty^+$ there exists a Banach limit 
$\varphi_a\in \ell_\infty^{*+}$, such that $\varphi_a(a)=\liminf\limits_{n\to\infty}a_n$. Hence, $\liminf\limits_{n\to\infty}a_n>0$.
Therefore, $a$ has the bounded inverse operator.

However, $(\frac{1}{n})$ does not have a bounded inverse operator, therefore it does not satisfy the conditions of Theorem \ref{C*-norm}.
\end{example}

\begin{example}

To give a noncommutative example, assume $\mathcal{A}=B(H)$ and $\mathrm{dim} H=\infty$, then there exists an injective positive trace-class operator $a\in C^+_1(H)$.
For any injective operator $a$ we are able to construct $L_1(a)$, but for any trace-class operator
there exists a Dixmier trace $\varphi\in B^{*+}(H)$,
for which $\varphi(a)=0$. Therefore, such operator $a$ does not satisfy the conditions of Theorem \ref{C*-norm}.
\end{example}

\section*{Acknowledgment}

Research supported in part by Russian Foundation for Basic Research grant 14-01-31358.

I would like to thank Dr. Oleg Tikhonov from Kazan Federal University for his expert advice (especially on the Theorem 5) 
and encouragement. He kindly read my paper and offered invaluable detailed advices on grammar, organization, and the theme of the paper.

\end{document}